\def\epsilon{\varepsilon}
\def\phi{\varphi}
\def\N{\mathbb{N}}
\def\Z{\mathbb{Z}}
\def\R{\mathbb{R}} 
\def\T{\mathbb{T}}
\def\S{\mathbb{S}}
\def\fb{\mathfrak{b}}
\def\II{\parbox[][0.6cm][c]{0cm}{\ }}
\def\rot{\mathop{\mbox{curl}}}
\def\div{\mathop{\mbox{div}}}
\def\tend{\mathop{\hbox{$\longrightarrow$}}}
\theoremstyle{plain}
\newtheorem{thm}{Theorem}
\newtheorem{prop}{Proposition}[section]
\newtheorem{lem}{Lemma}[section]
\newtheorem{defi}{Definition}[section]
\theoremstyle{definition}
\newtheorem{rema}{Remark}[section]
\numberwithin{equation}{section}
\title{On the controllability of the relativistic Vlasov-Maxwell system}
\author{Olivier Glass}\address{O. Glass, CEREMADE, UMR CNRS 7534, Universit\'e Paris-Dauphine, Place du Mar\'echal de Lattre de Tassigny
75775 Paris Cedex 16, France.}
\email{glass@ceremade.dauphine.fr}
\author{Daniel Han-Kwan}\address{D. Han-Kwan, DMA, UMR CNRS 8553, \'Ecole Normale Sup\'erieure, 45 rue d'Ulm, 75230 Paris Cedex 05, France. }
\email{daniel.han-kwan@ens.fr}
\begin{document}

\thanks{}

\date{November 2012}
\begin{abstract}In this paper, we study the controllability of the two-dimensional relativistic Vlasov-Maxwell system in a torus, by means of an interior control. We give two types of results. With the geometric control condition on the control set, we prove the local exact controllability of the system in large time. Our proof in this case is based on the return method, on some results on the control of the Maxwell equations, and on a suitable approximation scheme to solve the non-linear Vlasov-Maxwell system on the torus with an absorption procedure.

Without geometric control condition, but assuming that a strip of the torus is contained in the control set and under certain additional conditions on the initial data, we establish a controllability result on the distribution function only, also in large time. Here, we need some additional arguments based on the asymptotics of the Vlasov-Maxwell system with large speed of light and on our previous results
concerning the controllability of the Vlasov-Poisson system with an external magnetic field \cite{GHK}.
\end{abstract}

\maketitle

 
\section{Introduction}

\subsection{The relativistic Vlasov-Maxwell system}

Consider the relativistic Vlasov-Maxwell system:
\begin{equation} \label{VlasovMaxwell}
 \left\{ \begin{aligned}
& \partial_t f + \hat{v}\cdot\nabla_x f +  \left(E   + \frac{1}{c}\hat{v} \wedge B \right) \cdot \nabla_v f=\mathbbm{1}_\omega G, \quad \quad t\geq0, \, x \in \T^3:= \R^3/\Z^3, \, v \in \R^3,\\
& \partial_t E + c \operatorname{curl} B = -\int_{\R^3} f \hat v \, dv,  \quad \partial_t B + c \operatorname{curl} E=0, \\
& \operatorname{div} E  = \int_{\R^3} f \, dv- \int_{\R^3\times \T^3} f \, dv \,dx, 
 \quad \operatorname{div} B = 0, \\
& f_{| t=0} = f_0, \quad E_{| t=0} = E_0, \quad B_{| t=0} = B_0,
\end{aligned} \right.
\end{equation}
 where  $c>0$ is the \emph{speed of light} and
$$\hat v :=\frac{v}{\sqrt{1+\frac{|v|^2}{c^2}}}$$
is the relativistic velocity. 
The so-called distribution function $f(t,x,v)$ describes the statistical distribution of a population of electrons in a collisionless plasma: the quantity $f(t,x,v) \, dx \, dv$ can be interpreted as the number of particles at time $t$ whose position is close to $x$ and velocity close to $v$. The term $\mathbbm{1}_\omega G$ is a source in the Vlasov equation.
As usual, $E$ and $B$ stand for the electric and magnetic fields, which are solutions of the Maxwell equations. In the equation on $\operatorname{div} E$, the term $\int_{\R^3\times \T^3} f \, dv \,dx$ stands for a neutralizing background of fixed ions. We take as a convention here that the Lebesgue measure of the torus is equal to $1$.
  
In this paper, we will focus on the two-dimensional version of the Vlasov-Maxwell system, which means that $(x,v) \in \mathbb{T}^2 \times \mathbb{R}^2$. This corresponds to the physical situation in the usual three-dimensional geometry where:
\begin{equation}
  \begin{aligned}
  &E=(E_1(t,x),E_2(t,x),0), \\
   &B= (0,0, b(t,x)).
 \end{aligned}
  \end{equation}
In this framework, noticing that the divergence-free condition is automatically satisfied for $B$, the Vlasov-Maxwell system can be rewritten in the simplified form:
\begin{equation}
\label{VlaM}
    \left\{
  \begin{aligned}
  & \partial_t f + \hat{v}\cdot\nabla_x f + \left(E   + \frac{1}{c}\hat{v}^\perp b \right) \cdot \nabla_v f=\mathbbm{1}_\omega G, \quad \quad t\geq0, \, x \in \T^2, \, v \in \R^2,\\
& \partial_t E_1 - c \partial_{x_2} b=-\int_{\R^2} f \hat v_1 \, dv, \quad \partial_t E_2 + c \partial_{x_1} b=-\int_{\R^2} f \hat v_2 \, dv,  \\
& \partial_t b + c\left[  \partial_{x_1} E_2 -  \partial_{x_2} E_1\right]=0, \\
& \partial_{x_1} E_1 +  \partial_{x_2} E_2 =  \int_{\R^2} f \, dv- \int_{\R^2\times \T^2} f \, dv \, dx .
 \end{aligned}
 \right.
  \end{equation}
 If $\hat{v}= (\hat{v}_1,\hat{v}_2)$, we set $\hat{v}^\perp := (\hat{v}_2,-\hat{v}_1)$.
For simplicity, we shall also denote in the sequel $\operatorname{curl} b:=\nabla^\perp b = (\partial_{x_2} b, -\partial_{x_1} b)$.  

 We are interested in the \emph{controllability} properties of this system, by means of an interior control $G$, localized in the space variable in an open subset $\omega$ of the torus, but without localization in the velocity variable. The basic controllability question is the following one. Given $(f_0, E_0, B_0)$, $(f_1, E_1, B_1)$ belonging to a suitable function space and given some control time $T>0$, is it possible to steer the system from the first state to the second one in time $T$ with a well chosen control function $G$ ? In other words, we ask the solution of \eqref{VlaM} to satisfy:
\begin{equation}
 f_{| t=T} = f_1, \quad E_{| t=T} = E_1, \quad B_{| t=T} = B_1.
 \end{equation}
 Note that in order to preserve electroneutrality, the control $G$ has to satisfy
 $$
 \int_{\T^2 \times \R^2} \mathbbm{1}_\omega(x) G(t,x,v) \, dx \, dv =0, \quad \forall t \in [0,T].
 $$

\subsection{Geometric assumptions and main results of the paper}

We prove two kinds of controllability results: for the first one we assume that the celebrated geometric control condition is satisfied for $\omega$. For the second one, we assume another type of condition on $\omega$, as well as other conditions on the initial magnetic field and/or the speed of light.

\subsubsection{With the geometric control condition}
The Vlasov-Maxwell can be seen as a hyperbolic system, made of a transport equation (the Vlasov part) and a coupling between two wave equations (the Maxwell part).
Concerning the problem of controllability of the linear wave equation, it is well-known that the geometric properties of the control set $\omega$ play a key role.
 In the seminal works \cite{RT} and  \cite{BLR2,BLR}, J. Rauch and M. Taylor and C. Bardos, G. Lebeau and J. Rauch introduced the so-called \emph{geometric control condition} on $\omega$, which roughly means that every ray of geometric optics meets $\omega$ before some fixed time, and proved estimates which imply the exact controllability of the wave equation.
 
 In our case, the geometric control condition can be formulated as follows:
\begin{defi}
The open subset $\omega$ of $\T^2$ satisfies the geometric control condition (GCC) if:
\begin{multline} \label{GeometricConditionOmega}
\text{For any } x \in \mathbb{T}^2 \text{ and any direction } e \in \mathbb{S}^{1}, \text{ there exists } y \in \R^{+} \text{ such that } x+ye \in \omega.
\end{multline}
\end{defi}
Conversely, it is well known (see N. Burq and P. Gérard \cite{BG}) that the geometric control condition is not only sufficient but is also actually a \emph{necessary} condition to get exact controllability of the wave equation.
 
It is hence natural to start by assuming that $\omega$ does satisfy the geometric control condition.
Our first result asserts that with this assumption, local controllability near $0$ of the Vlasov-Maxwell system holds  when the controllability time is large enough. This restriction is expected because of the finite propagation speed both for the electromagnetic waves and for the charged particles.

\begin{thm}[With geometric control condition] \label{TheoGCC}
Assume that $\omega$ satisfies the geometric control condition.
There exists $T_0>0$, such that for any $T>T_0$, there exists  $\kappa>0$ satisfying the following. If for $i=0,1$,  $(f_i,E_i,B_i) \in H^3(\T^2\times \R^2) \times H^3(\T^2) \times  H^3(\T^2)$ are such that $f_i$ is compactly supported in $v$ and satisfy the compatibility conditions
\begin{align}
& \operatorname{div} E_i = \int_{\R^2} f_i \, dv - \int_{\T^2\times\R^2} f_i \, dv \, dx, \\
& \int_{\T^2\times\R^2} f_0 \, dv \, dx =  \int_{\T^2\times\R^2} f_1 \, dv \, dx,
\end{align}
as well as the smallness assumption
\begin{align}
\|  (f_i,E_i,B_i)\|_{H^3} \leq \kappa,  \quad i=0,1,
\end{align} 
then there exists a control $G \in H^2([0,T] \times \T^2 \times \R^2)$, supported in $\omega$, such that the corresponding solution $(f,E,B)$ to the Vlasov-Maxwell system \eqref{VlaM} with initial data $(f_0,E_0,B_0)$ satisfies:
\begin{equation*}
 f_{| t=T}=f_1, \quad E_{| t=T} = E_1, \quad B_{| t=T} = B_1.
\end{equation*}
\end{thm}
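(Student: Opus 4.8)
The plan is to use the \emph{return method} of Coron, combined with known controllability results for the Maxwell system under the geometric control condition and a careful fixed-point / approximation argument to handle the nonlinear Vlasov coupling. The key idea is that the target equilibrium $(0,0,0)$ is a singular point for controllability: linearizing the Vlasov equation around $f=0$ gives free transport $\partial_t f + \hat v\cdot\nabla_x f = \mathbbm 1_\omega G$, and since the control is localized in $x$ but not in $v$, one can in principle steer $f$ to zero by first moving all particles into $\omega$ (using that, over a long enough time, every characteristic of free transport starting from $\T^2$ enters $\omega$ — this is where the largeness of $T_0$ enters, and it is essentially the GCC-type condition at the kinetic level) and then absorbing them there. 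Meanwhile the Maxwell part, being linear, is exactly controllable in time $T_0$ large enough by \cite{BLR,BLR2,RT}, \emph{provided} one controls the full system $(E,b)$ with a right-hand side supported in $\omega$; the coupling term $-\int f\hat v\,dv$ will be treated as a perturbation/source that one compensates.

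Concretely, I would proceed in the following steps. First, reduce to the case $f_1=0$, $E_1=0$, $B_1=0$ by a time-reversal argument: controllability to $0$ plus controllability from $0$, glued at an intermediate time, yields controllability between arbitrary small states; the reversibility of Vlasov-Maxwell (up to sign changes $v\mapsto -v$, $t\mapsto T-t$, $B\mapsto -B$) makes the ``from $0$'' part equivalent to a ``to $0$'' statement. Second, construct a reference trajectory: since we are already at an equilibrium, the natural choice is simply the zero trajectory, so the ``return method'' here degenerates somewhat — the real work is the nonlinear analysis around it. Third, set up the controlled problem as a fixed point: given a candidate field $(E,b)$ and candidate density $\rho=\int f\,dv$, $j=\int f\hat v\,dv$, solve the Maxwell system with control to reach $(0,0)$ at time $T$ absorbing the known source $-j$ (using the linear controllability + an explicit right-hand side supported in $\omega$, whose cost is controlled by observability), and solve the Vlasov equation with the force $E+\frac1c\hat v^\perp b$ and a control $\mathbbm 1_\omega G$ chosen so that $f(T,\cdot,\cdot)=0$; the latter requires that, along the perturbed characteristics, every particle visits $\omega$, which holds for small data by a perturbative argument from the free-transport case, and then an absorption procedure (as announced in the abstract — cutting off $f$ smoothly once a characteristic is inside $\omega$, which is exactly realized by a suitable $\mathbbm 1_\omega G$) brings $f$ to zero. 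Fourth, close the fixed point in the $H^3$ framework using the smallness of the data, standard energy estimates for Vlasov-Maxwell, propagation of compact support in $v$ (finite speed $\le c$), and the continuity of the Maxwell control operator.

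The main obstacle, as the abstract already hints, is the construction of the \emph{approximation scheme with absorption} that makes the Vlasov control rigorous while keeping $H^3$ regularity and compact $v$-support: naively multiplying by $\mathbbm 1_\omega$ destroys smoothness, so one needs a genuine iterative scheme — presumably alternating between transporting along characteristics and damping inside $\omega$ — together with uniform estimates ensuring convergence. A secondary difficulty is the interaction between the two controls: the Maxwell control injects charges/currents that back-react on the Vlasov characteristics, so the largeness threshold $T_0$ and the smallness $\kappa$ must be chosen compatibly, and one must verify that the electroneutrality constraint $\int_\omega G\,dx\,dv=0$ can be met (it can, by adjusting $G$ with a mean-zero corrector in $x$ supported in $\omega$, which only perturbs $E$ through $\operatorname{div}E$ and is absorbed into the Maxwell controllability step). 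I expect the regularity bookkeeping for Vlasov-Maxwell in 2D — where global well-posedness and propagation of $H^3$ norms and of $v$-support are classical but technical — to be the bulk of the routine-but-lengthy part, while the conceptual heart is the absorption scheme and the choice of $T_0$ ensuring every characteristic meets $\omega$.
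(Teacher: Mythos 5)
There is a genuine conceptual gap at the heart of your step ``Second.'' You write that since we are linearizing at an equilibrium, the natural reference trajectory is simply the zero trajectory, so the return method ``degenerates somewhat.'' This is precisely the wrong conclusion, and it contradicts your own step ``First,'' where you assert that ``over a long enough time, every characteristic of free transport starting from $\T^2$ enters $\omega$.'' That claim is false: a particle with $v=0$ (or $|v|$ small compared to $1/T$) under free transport never reaches $\omega$ if it starts outside, no matter how large $T$ is. This is exactly the first obstruction the paper identifies, and it is the reason the linearization at $(0,0,0)$ fails and the return method is genuinely needed (not a formality). Your later appeal to ``a perturbative argument from the free-transport case'' to ensure that perturbed characteristics visit $\omega$ therefore cannot work either; small perturbations of $(0,0,0)$ still leave slow particles stuck.

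The paper's construction of the reference solution $(\overline f, \overline E, \overline B)$ is the piece you are missing, and it is non-trivial. The key observation is that on the torus, any constant field $(E_1,B_1)\in\R^2\times\R$ is a stationary solution of the source-free Maxwell system. Using the exact controllability of the Maxwell equations under GCC (Theorem \ref{controlMaxwell}), one steers the electromagnetic field from $(0,0)$ to the stationary state $(E_1,0)$ with $E_1=(1,0)$, holds there long enough that the constant electric force accelerates all slow particles to speed $\ge m$ (after which the free-transport argument does apply to them, since only $|\hat v|>c/2$ is needed to hit $\omega'$ within a fixed time $T_1$), and then steers $(E,B)$ back to $(0,0)$. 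The distribution $\overline f$ is chosen as $\mathcal Z_1(v)\tilde j^1+\mathcal Z_2(v)\tilde j^2$ with the $\mathcal Z_i$ producing prescribed moments so that $\overline f$ is consistent with the Maxwell sources while being supported in $\omega$; it is a solution of Vlasov-Maxwell with a source supported in $\omega$. Only after this reference solution is in hand — and condition \eqref{Cond3.3} holds, guaranteeing that every characteristic of $\overline E+\frac{\hat v^\perp}{c}\overline B$ enters $\omega'$ with large speed at some intermediate time — does the fixed-point/absorption machinery you correctly describe make sense. Your description of the absorption scheme, the Schauder fixed point in $H^3$, and the time-reversal reduction (taking $\lambda=-1$ in the scaling) are all in line with the paper; the missing ingredient is the non-trivial reference trajectory and the role of the constant electric field in curing the slow-particle obstruction.
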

 
 
\begin{rema}
The solution that we build involves an electric field $E$ and a magnetic field $B$ which belong to the Sobolev space $C([0,T]; H^3(\T^2))$, which is imbedded in $C([0,T]; C^1(\T^2))$, so that characteristics are well defined in the classical sense.
\end{rema}
 
  
\begin{rema}
It is possible to generalize this theorem in two directions: relax the smallness assumption on $B_i$ and replace the assumption of compact support in $v$ by an assumption of exponential decrease in $v$. We refer to Subsections \ref{gene1} and \ref{gene2}.
\end{rema}

 \subsubsection{With the strip assumption}
Since GCC is necessary and sufficient for the controllability of the wave equation, it seems very involved (although maybe not impossible)  to get the result of Theorem \ref{TheoGCC} without assuming GCC.
 
 Nevertheless, the situation is more favorable  if we focus on the system of particles only and ``forget'' about the electromagnetic field: this corresponds to the control of the distribution function $f$ only. Indeed, for the kinetic transport equation, we do have more flexibility than for the wave equation and it is natural to try to relax as much as possible the assumptions on $\omega$.
 In this work, we are able to prove some results in this direction. Namely, we prove a {controllability} result on the distribution function only with a geometric condition which does not involve GCC (and which is not implied by GCC).

  \medskip

This geometric assumption on the control set will be referred to as the \emph{strip assumption} (loosely speaking, as the name suggests, this corresponds to the case where $\omega$ contains a strip).

\begin{defi}
We say that $\omega$ satisfies the strip assumption if there exists a straight line of $\R^{2}$ whose image ${\mathcal H}$ by the canonical surjection $s:\R^{2} \rightarrow \T^{2}$ is closed and included in $\omega$. We call $n_{H}$ a unit vector, orthogonal to ${\mathcal H}$. For $l>0$, we denote 
\begin{equation*}
{\mathcal H}_{l}:= {\mathcal H} + [-l,l] n_{H}.
\end{equation*}
\end{defi}

Since ${\mathcal H}$ is closed in $\T^{2}$, note that we can define ${d}>0$ such that 
\begin{equation}
\label{H2d}
{\mathcal H}_{2d} \subset \omega,
\end{equation}
and such that $4{d}$ is less than the distance between two successive lines in $s^{-1}({\mathcal H})$.

We first prove an ``asymptotic'' controllability result, for which the speed of light is considered as a parameter. This next result will be valid for a large speed of light. 


%
%
 
 
\begin{thm}[With the strip assumption]
\label{Theo2}
Assume that $\omega$ satisfies the strip assumption.  Let  $b_0 \in H^3(\T^2;\R)$ be a magnetic field such that
\begin{equation*}
\int_{\T^2} b_0 \, dx \neq 0.
\end{equation*}
There exists $T_0>0$ such that for $T \geq T_0$, there exist $\kappa>0$ and $c_T>0$ satisfying the following.
If $c \geq c_T$, if  $(f_0,E_0) \in H^3(\T^2\times \R^2) \times H^3(\T^2) $  and $f_1 \in H^3(\T^2\times \R^2)$ are  such that $f_0, f_1$ are compactly supported in $v$ and satisfy
\begin{align}
\label{Th2C1}
& \operatorname{div} E_0 =\int_{\R^2} f_0 \, dv - \int_{\T^2\times\R^2} f_0 \, dv \, dx, \\  
\label{Th2C2}
& \int_{\T^2\times\R^2} f_0 \, dv \, dx =  \int_{\T^2\times\R^2} f_1 \, dv \, dx, \\ 
\label{Th2C3}
& \| (f_0,E_0)\|_{H^3} \leq \kappa, \quad \| f_1\|_{H^3} \leq \kappa, 
\end{align} 
and if
\begin{align}
\label{Th2C4}
& B_0 = c \, b_0 , 
\end{align} 
then there exists a control $G \in H^2([0,T] \times \T^2 \times \R^2)$, supported in $\omega$, such that the corresponding solution $(f,E,B)$ to the Vlasov-Maxwell system \eqref{VlaM} with initial data $(f_0,E_0,B_0)$ satisfies:
\begin{equation*}
 f_{| t=T}=f_1.
\end{equation*}
\end{thm}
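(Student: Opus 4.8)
The plan is to exploit the large-$c$ asymptotics of the relativistic Vlasov--Maxwell system. When $B_0 = c\, b_0$ with $\int_{\T^2} b_0 \neq 0$, and $c\to\infty$, the magnetic force term $\frac{1}{c}\hat v^\perp b$ does \emph{not} vanish: writing $b = c\, \beta$, one has $\frac{1}{c}\hat v^\perp b = \hat v^\perp \beta$, and since $\hat v\to v$ as $c\to\infty$, the Vlasov equation formally converges to
\begin{equation*}
\partial_t f + v\cdot\nabla_x f + \bigl(E + v^\perp \beta\bigr)\cdot\nabla_v f = \mathbbm{1}_\omega G,
\end{equation*}
coupled with a Poisson-type equation for $E$ (the Maxwell system degenerating, after rescaling, to the electrostatic regime) and a transport/conservation law for $\beta$ whose mean is preserved and nonzero. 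In other words, the limit system is exactly the Vlasov--Poisson system with an external magnetic field of the kind treated in our previous work \cite{GHK}, whose controllability result (under the strip assumption, with nonzero average magnetic field, in large time, for small data) we intend to invoke. So the first step is to set up this singular limit carefully: rescale the unknowns, identify the limit system, and quote from \cite{GHK} the existence of a control $\bar G$, supported in $\omega$, driving the limit distribution function from $f_0$ to $f_1$ in time $T \geq T_0$, together with quantitative bounds on $\bar G$ and on the limit trajectory $(\bar f, \bar E)$ in terms of $\kappa$.

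Next I would run a perturbation/stability argument to transfer this control to the genuine relativistic system at finite but large $c$. Concretely, feed the control $\bar G$ (or a small correction of it) into \eqref{VlaM} with data $(f_0, E_0, c\,b_0)$ and estimate, on $[0,T]$, the difference between the resulting relativistic solution and the limit solution. This requires: (i) a uniform-in-$c$ a priori estimate for \eqref{VlaM} in the relevant $H^3$-type norm on $[0,T]$, valid because the data and the control are $O(\kappa)$ small and $T$ is fixed — here the standard energy method for Vlasov--Maxwell applies, the delicate point being that all constants must be independent of $c$, which is where the structure $B_0 = c\,b_0$ and the relativistic bound $|\hat v|\leq c$ are used to control the magnetic term; (ii) a convergence estimate showing $\|f^c - \bar f\|$ on $[0,T]$ is $o(1)$ as $c\to\infty$, coming from $|\hat v - v| \lesssim |v|^3/c^2$ on the (uniformly bounded, by finite-speed propagation and the a priori estimate) velocity support, plus the convergence of the Maxwell fields to the electrostatic field. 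This gives $\|f^c_{|t=T} - f_1\|$ small, but not zero.

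To upgrade approximate controllability to exact controllability I would use a fixed-point (or local-inversion) scheme around the limit control, in the spirit of the return method as used already for Theorem \ref{TheoGCC}: linearize the control-to-final-state map of \eqref{VlaM} along the reference trajectory, show this linearized map is onto (its controllability being inherited, for $c$ large, from that of the limit Vlasov--Poisson-with-magnetic-field linearized map, which is the content of the argument in \cite{GHK}), and conclude by an inverse mapping / Banach fixed point argument in a small ball of $H^2([0,T]\times\T^2\times\R^2)$. Choosing first $T_0$ and the smallness $\kappa$ from \cite{GHK}, then $c_T$ large enough that all the $c$-dependent error terms are dominated, closes the argument; one also checks that the electroneutrality constraint on $G$ is preserved throughout, since it is preserved by the limit control and by the corrections.

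The main obstacle I expect is step (i)--(ii): obtaining a priori and convergence estimates for the relativistic Vlasov--Maxwell system that are genuinely \emph{uniform in $c$} over a fixed time interval $[0,T]$. The Maxwell part propagates at speed $c$, so naively the fields could develop $c$-dependent oscillations; one must show that with $B_0 = c\,b_0$ and small, compactly-supported-in-$v$ data, the rescaled fields stay bounded and converge to the electrostatic limit, controlling in particular the current $\int f\hat v\,dv$ and the time-derivative terms $\partial_t E$. This is the technical heart; once it is in place, the controllability transfer and the fixed-point closure are comparatively routine given \cite{GHK} and the techniques already developed for Theorem \ref{TheoGCC}.
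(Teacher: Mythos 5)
Your underlying insight --- take $c\to\infty$ with $B_0=c\,b_0$ so that the magnetic term $\frac{1}{c}\hat v^\perp B$ survives and the system degenerates to Vlasov--Poisson with an external magnetic field, then transfer the controllability of \cite{GHK} back to large but finite $c$ --- is the right one and matches the paper. The gap is in your passage from approximate to exact controllability. You assert that the linearized control-to-final-state map is onto, and that ``its controllability is inherited, for $c$ large, from that of the limit Vlasov--Poisson-with-magnetic-field linearized map, which is the content of the argument in \cite{GHK}.'' That is not what \cite{GHK} establishes: it does not prove surjectivity of a linearized control-to-state map; it, like the present paper, obtains exact controllability via an \emph{absorption} operator acting on a ball $B(x_0,2r_0)\subset\omega$ coupled with a \emph{Schauder} (compactness) fixed point in the set $\mathcal{S}_{\varepsilon,R}$. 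An inverse-function/Banach contraction scheme in $H^2$ or $H^3$ also faces a genuine loss-of-derivatives obstruction: the absorption procedure creates a discontinuity of $f$ across $S(x_0,2r_0)$ and a loss of velocity moments, which is exactly why the paper opts for a compactness argument rather than an implicit-function one (see Subsection \ref{gene0}). So the inverse-map step, as written, does not close.

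Two further discrepancies are worth naming. First, the paper never passes through an ``approximate controllability'' statement for the full nonlinear controlled trajectory: the large-$c$ approximation (Lemma \ref{approx}) is applied only to the \emph{reference} characteristics, to show that the return-method property \eqref{return2} persists for $c$ large (Proposition \ref{refVM}); the nonlinear problem with the actual data is then solved directly by the absorption fixed point of Section \ref{sec4}, with the control recovered a posteriori as the residual of the Vlasov equation, automatically supported in $\omega$. Second, in the ill-prepared setting here the Maxwell fields converge only \emph{weakly in time} to the electrostatic limit (the oscillatory $\tilde E$, $\tilde B$ of \eqref{DefLesTildes}); the paper handles this in the proof of Proposition \ref{refVM} by an integration by parts in time along characteristics, and your proposed uniform-in-$c$ estimate in steps (i)--(ii) would have to contend with the same fast oscillation, which a naive energy estimate will not control.
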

\begin{rema}
The scaling in \eqref{Th2C4} is in fact natural if we want to recover the convergence of Vlasov-Maxwell to Vlasov-Poisson with an external magnetic field, in the classical limit $c \rightarrow + \infty$: see Paragraph \ref{VMVP}. \par
From the physical point of view, one can remark that $B_0$ is in CGS units, namely Gauss (which is a theoretical physics unit), whereas $\frac{B_0}{c}$ is in SI units, namely Teslas (``usual'' unit). Thus, imposing that $B_0=  c \, b_0$ is equivalent to imposing that the intensity keeps a non-zero value when $c$ becomes large. So this assumption is also natural from this point of view.
\end{rema}
As a consequence of the previous result, we are also be able to prove the following theorem, in which the speed of light is considered as a fixed parameter, let us say $c=1$.
\begin{thm}[With the strip assumption]
\label{Theo}
Let $c=1$. Assume that $\omega$ satisfies the strip assumption.
 Let  $b_0 \in H^3(\T^2)$ be a magnetic field such that
\begin{equation*}
\int_{\T^2} b_0 \, dx \neq 0.
\end{equation*}
Then there exist $T_1>0$ and $\lambda_{1}>0$ such that the following holds.
For any $\lambda \in (0,\lambda_{1}]$, for any $T\geq T_1/ \lambda$, there exists $\kappa>0$ such that, if  $(f_0,E_0) \in H^3(\T^2\times \R^2) \times H^3(\T^2)$ and $f_1 \in H^3(\T^2\times \R^2)$ are compactly supported in $v$ and satisfy
\begin{align}
\label{Th3C1} &   \operatorname{div} E_0 =\int_{\R^2} f_0 \, dv - \int_{\T^2\times\R^2} f_0 \, dv \, dx, \\ 
\label{Th3C2} & \int_{\T^2\times\R^2} f_0 \, dv \, dx =  \int_{\T^2\times\R^2} f_1 \, dv \, dx, \\
\label{Th3C3}
& \| (f_0,E_0)\|_{H^3} \leq \kappa, \, \| f_1\|_{H^3} \leq \kappa, 
\end{align}
and if
\begin{align}
\label{Th3C4}
&B_0(x)= \lambda \, b_0(x),
\end{align} 
then there exists a control $G \in H^2([0,T] \times \T^2 \times \R^2)$, supported in $\omega$, such that the corresponding solution $(f,E,B)$ to the Vlasov-Maxwell system \eqref{VlaM} with initial data $(f_0,E_0,B_0)$ satisfies:
\begin{equation}
f_{|t=T}=f_1.
\end{equation}
\end{thm}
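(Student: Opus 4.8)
The plan is to deduce Theorem \ref{Theo} from Theorem \ref{Theo2} by a scaling argument that trades the size of the magnetic field for the speed of light. Given $c=1$ and an initial magnetic field $B_0 = \lambda b_0$, I would introduce the change of variables that rescales time, space and velocity so that the rescaled system is again a relativistic Vlasov--Maxwell system but now with speed of light $c' = 1/\lambda$ and initial magnetic field $B_0' = b_0 = c' \lambda b_0$, i.e.\ of the form required by \eqref{Th2C4}. Concretely, set $\tilde f(t,x,v) = f(t/\lambda, x, v)$ (and similarly rescale the fields, with the electric field and the source $G$ picking up appropriate powers of $\lambda$): since the magnetic force term in \eqref{VlaM} is $\frac1c \hat v^\perp b$, multiplying $b$ by $\lambda$ and dividing $t$ by $\lambda$ has the same effect as replacing $c$ by $c/\lambda$. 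One must check that $\hat v$ transforms consistently, which is why $v$ may also need to be rescaled; the relativistic velocity $\hat v = v/\sqrt{1+|v|^2/c^2}$ is the one delicate point here, and choosing the velocity scaling to be trivial while only $c$ changes is exactly what makes $\hat v$ compatible. The divergence constraints \eqref{Th3C1} and the neutrality/compatibility conditions \eqref{Th3C2} are preserved by the scaling, as is compact support in $v$ and the class $H^3$ (up to constants depending on $\lambda$, which is fixed).

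After this reduction, I would apply Theorem \ref{Theo2} to the rescaled data: it provides $T_0 > 0$ and, for $T' \geq T_0$, thresholds $\kappa' > 0$ and $c_{T'} > 0$ such that whenever $c' \geq c_{T'}$ and the rescaled data have $H^3$-norm $\leq \kappa'$, there is a control driving $\tilde f$ to the rescaled target $\tilde f_1$ at time $T'$. The condition $c' = 1/\lambda \geq c_{T'}$ is what forces $\lambda$ to be small, and since $c_{T'}$ depends on $T'$ one fixes $T_1$ so that $c_{T_1}$ is finite and then sets $\lambda_1 = 1/c_{T_1}$ (monotonicity in $T'$ of the threshold, or simply taking $T_1$ large enough, lets one arrange $c_{T'} \leq 1/\lambda$ for all $\lambda \leq \lambda_1$ once $T' \geq T_1$; note $T' = \lambda T \geq T_1$ precisely when $T \geq T_1/\lambda$, which is the hypothesis). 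The smallness threshold $\kappa$ in Theorem \ref{Theo} is then obtained from $\kappa'$ by absorbing the (bounded, $\lambda$-dependent) constants coming from the scaling of norms. Undoing the change of variables turns the control for the rescaled system into a control $G \in H^2([0,T]\times\T^2\times\R^2)$ supported in $\omega$ for the original system, and the relation $\tilde f_{|t=T'} = \tilde f_1$ becomes $f_{|t=T} = f_1$.

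The main obstacle I anticipate is bookkeeping the scaling exponents so that every equation of \eqref{VlaM} — the Vlasov equation, the two Ampère-type equations for $E_1, E_2$, the Faraday equation for $b$, and the Gauss law — is simultaneously invariant under the chosen rescaling with the \emph{same} new value $c' = 1/\lambda$; in particular one must verify that the source terms $\int f \hat v_i\, dv$ and $\int f\, dv$ scale compatibly with the left-hand sides, which pins down the power of $\lambda$ in front of $\tilde E$ and $\tilde G$. A secondary point is to make sure the geometric object $\omega$ and the strip ${\mathcal H}$ are untouched by the scaling (they are, since we do not rescale $x$), so that the strip assumption transfers verbatim. Once the scaling is set up correctly, the rest is a direct invocation of Theorem \ref{Theo2} together with elementary manipulations of the constants.
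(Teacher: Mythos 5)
Your overall strategy matches the paper's: deduce Theorem~\ref{Theo} from Theorem~\ref{Theo2} by rescaling so that the effective speed of light becomes $1/\lambda$. But the concrete scaling you propose does not close, and the place where it fails is exactly the relativistic velocity $\hat v$. You set $\tilde f(t,x,v)=f(t/\lambda,x,v)$ without rescaling $v$, and then assert that ``choosing the velocity scaling to be trivial while only $c$ changes is exactly what makes $\hat v$ compatible.'' This is backwards. With your scaling, each $\partial_t$ contributes a factor $1/\lambda$, so after clearing it the transport term in the rescaled equation is $\frac{1}{\lambda}\hat v\cdot\nabla_x$ with the \emph{original} $\hat v = v/\sqrt{1+|v|^2}$. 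For this to coincide with the Vlasov transport operator at speed of light $c'=1/\lambda$ one would need
\begin{equation*}
\frac{1}{\lambda}\,\frac{v}{\sqrt{1+|v|^2}} = \frac{v}{\sqrt{1+\lambda^{2}|v|^{2}}}\qquad\text{for all } v \in \R^2,
\end{equation*}
which is false unless $\lambda=1$. The velocity must be dilated along with time: the correct scaling is the one recalled in \eqref{lamb}--\eqref{ChgtEchelleB}, namely $f^\lambda(t,x,v)=f(\lambda t,x,v/\lambda)$, $E^\lambda(t,x)=\lambda^2 E(\lambda t,x)$, $B^\lambda(t,x)=\lambda^2 B(\lambda t,x)$. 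Under this dilation $\lambda\cdot\widehat{(v/\lambda)} = v/\sqrt{1+|v|^2/(c\lambda)^2}$, i.e.\ exactly the relativistic velocity at the new speed of light $c\lambda$, and the whole system \eqref{VlaM} is covariant (cf.\ \eqref{rescale}).

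This also exposes an error in your initial-data bookkeeping: you write $B_0'=b_0$ for the rescaled initial magnetic field, but the $\lambda^2$ factor on the fields gives $B_0'=\lambda^{-2}\cdot\lambda b_0=\lambda^{-1}b_0=c'\,b_0$, which is what \eqref{Th2C4} requires with the \emph{same} profile $b_0$. If you landed on $b_0$ instead, you would be forced to apply Theorem~\ref{Theo2} with a $\lambda$-dependent magnetic profile, and the constants $T_0$, $c_T$, $\kappa$ produced by Theorem~\ref{Theo2} would then themselves depend on $\lambda$, destroying the uniformity needed to define $T_1$ and $\lambda_1$ once and for all. With the correct exponents everything is uniform, apart from a harmless $\min(1,\lambda^{2})$ loss in the smallness threshold coming from the $v$- and field-rescalings of the $H^3$ norms, which, as you note, is absorbed because $\lambda$ is fixed.
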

\begin{rema}
The choice $c=1$ in Theorem \ref{Theo} is naturally arbitrary (we could have taken any positive constant).
Note also that taking $\lambda$ small corresponds to an assumption of smallness on the initial magnetic field $B_0$. But this yields a result of controllability for a large time. As we will see in the proof, the effect of the magnetic field is indeed crucial to get the result.
\end{rema}

  \subsection{A short review on the Cauchy problem} Let us now provide a short review on the classical Cauchy theory for the Vlasov-Maxwell equation.
 To our knowledge, the very first results concerning these equations concern the local in time existence of classical solutions,  and were obtained in independent works by K. Asano \cite{ASA}, P. Degond \cite{DEG} and  S. Wollmann \cite{Wol1,Wol2}. All these contributions are based on Sobolev type function spaces. For a ``sharper'' theory based on $C^1$ spaces, we refer to the fundamental works of R. Glassey and W. Strauss \cite{GS86,GS87,GS89}, where the structure of the Vlasov-Maxwell equation (kinetic transport + wave) plays a crucial role. Concerning the global theory, there have also been several remarkable contributions: in 3D, for equations set on the whole space, R. Glassey and W. Strauss gave in the pionneering work \cite{GS86} a famous criterion for global existence (see also the more recent contributions of S. Klainerman and G. Staffilani \cite{KS}, and F. Bouchut, F. Golse and C. Pallard \cite{BGP} for alternative proofs); further studies around this criterion have been at the core of many papers (see for instance \cite{GS-CMP,SCH-ind,PAL}), but the unconditional global existence in three dimensions is still an open problem. In 2D, in the whole space case, the problem of global existence was settled by R. Glassey and J. Schaeffer \cite{GSC98} as well as for the so-called 2D-1/2 case \cite{GSC97}.
 
 We also mention the fundamental contribution of R.J. DiPerna and P.-L. Lions, who built in \cite{DPL} global weak solutions (in the energy space) to the Vlasov-Maxwell system, but we shall not use this type of solutions and will only consider classical solutions, for which characteristics are well-defined.
%
%
%
%
\subsection{Organization of the following of the paper}
Let us describe how the paper is organized.
In Section \ref{sec1}, we explain the strategy of the proofs. In Sections \ref{sec2} and \ref{sec3}, we construct some relevant reference solutions, in order to apply the return method. Section \ref{sec2} considers the case of Theorem \ref{TheoGCC}, which corresponds to the situation with geometric control condition, while Section \ref{sec3} deals with the case of Theorems \ref{Theo2} and \ref{Theo} (with the strip assumption). For the latter, we will revisit the case of Vlasov-Poisson with external force fields of \cite{GHK} and rely on the fact that the dynamics of Vlasov-Poisson with external magnetic field is a good approximation of the dynamics of Vlasov-Maxwell, when the speed of light is large. Finally, in Section \ref{sec4}, we present the approximation scheme which will allow to give a solution of the full non-linear problems and conclude the proofs of Theorems \ref{TheoGCC}, \ref{Theo2} and \ref{Theo}. This scheme involves an absorption procedure which yields some new technical difficulties (that do not appear when one considers the Cauchy problem for the relativistic Vlasov-Maxwell system without source). Finally we give in an Appendix the proof of a key lemma for Theorems \ref{Theo2} and \ref{Theo}.
\section{Strategy of the proofs}
\label{sec1}
As a preliminary, we set up in this section the strategies that will be implemented in this paper.
\subsection{Some notations} Let us first give some useful notations.

For $T>0$, we denote $Q_{T}:=[0,T] \times \T^{2} \times \R^{2}$. For a domain $\Omega$, we write $C_{b}^{l}(\Omega)$, for $l \in \N$, for the set $C^{l}(\Omega) \cap W^{l,\infty}(\Omega)$. 
For any $l \in \N$, we denote by $H^l(\T^2 \times \R^2)$ the usual Sobolev space defined in $\T^2 \times \R^2$. \par
For $x$ in $\T^{2}$ and $r >0$, we denote by $B(x,r)$ the open ball
of center $x$ and radius $r$, and by $S(x,r)$ the corresponding
circle. All radii will always be chosen small enough so that
$S(x,r)$ does not intersect itself. 
\par
Let us also introduce some notations regarding characteristics and auto-induced electromagnetic field. Let $F(t,x,v)$ some force field with Lispchitz regularity and sublinear at infinity in $v$. Let $s\geq 0$, which corresponds to an ``initial'' time and $(x,v) \in \T^2 \times \R^2$.

We call $(X(t,s,x,v),V(t,s,x,v))$ the \emph{relativistic} characteristics associated with $F$, the solutions of the system of ODEs:
\begin{equation*}
\left\{ \begin{aligned}
& \frac{dX}{dt}=\hat{V},\\
& \frac{dV}{dt}= F(t,X,V), \\
& X(s,s,x,v)= x, \quad V(s,s,x,v)= v,
\end{aligned} \right.
\end{equation*}
with $\hat{V}= \frac{V}{\sqrt{1+ \frac{|V|^2}{c^2}}}$. 

We can also define  $(\mathcal{X}(t,s,x,v),\mathcal{V}(t,s,x,v))$ the \emph{classical} characteristics associated with $F$:
\begin{equation*}
\left\{ \begin{aligned}
& \frac{d\mathcal{X}}{dt}={\mathcal{V}},\\
& \frac{d\mathcal{V}}{dt}= F(t,\mathcal{X},\mathcal{V}), \\
& \mathcal{X}(s,s,x,v)= x, \quad \mathcal{V}(s,s,x,v)= v.
\end{aligned} \right.
\end{equation*}

In the following, when not otherwise specified, the characteristics under study are relativistic.

Note that the characteristics (relativistic or classical) are well defined by the classical Cauchy-Lipschitz theorem. Usually, when there is no ambiguity, we will simply write $(X,V)$ instead of the full notation $(X(t,0,x,v),V(t,0,x,v))$. \par
Finally, for any distribution function $f(t,x,v)$ sufficiently smooth and with sufficient decay in velocity, we will denote $E^f(t,x)$ and $B^f(t,x)$ the auto-induced force fields, which are solutions of the Maxwell equations:
\begin{equation*}
\left\{ \begin{aligned}
& \partial_t E^f + c \operatorname{curl} B^f = -\int_{\R^2} f \hat v \, dv,  \quad \partial_t B^f + c \operatorname{curl} E^f=0, \\
& \operatorname{div} E^f  = \int_{\R^2} f \, dv- \int_{\R^2\times \T^2} f \, dv \, dx, \quad \operatorname{div} B^f = 0, \\
\end{aligned} \right. 
\end{equation*}
with some relevant initial data (depending on the context).

\subsection{The return method}
Since we intend to show some controllability properties on the Vlasov-Maxwell system, for small data (that are close to $0$), it would be natural to apply the following proof scheme:
\begin{enumerate}
\item Linearize the equations around $0$ and prove some controllability properties on these linear equations.
\item Prove that these properties are somehow preserved by the full equations, by using some fixed point theorem.
\end{enumerate}
Unfortunately, we are in a situation where this scheme fails. 
If we  consider the linearized Vlasov-Maxwell equation,  around the trivial state $(\overline{f}, \overline{E}, \overline{B})=(0,0,0)$, we obtain the following equation:
%
\begin{equation} \label{freee}
\partial_t f + \hat{v}\cdot\nabla_x f = \mathbbm{1}_\omega G,
\end{equation}
which is the \emph{relativistic free transport equation}. The characteristics associated with the transport part are simply straight lines. By Duhamel's formula, we obtain the explicit representation for $f$:
\begin{equation*}
f(t,x,v)=f_0(x-t\hat{v},v) + \int_0^t  (\mathbbm{1}_\omega G)(s,x-(t-s)\hat{v},v)ds.
\end{equation*}
One can observe that there are two types of obstruction to controllability for the (linear) relativistic free transport equation.
\begin{itemize}
\item (Small velocities) The first obstruction concerns the small velocities. If the velocity $\hat{v}$ of a charged particle driven by \eqref{freee} is not large enough, then it will not be able to reach the control zone $\omega$ in the desired time $T$, so that $(\mathbbm{1}_\omega G)(s,x-t\hat{v},v)=0$ for all $t \in [0,T]$. In particular, if $c$ is not large enough with respect to the desired control time, then this obstruction appears for many trajectories.
\item (Bad directions) The second obstruction is of geometric  type: it is possible that a particle has initially a bad direction, so that, even if the modulus of its velocity and the speed of light are very large,  it will never reach the control zone. Thus we cannot influence the value of the distribution function along this trajectory. There is no such problem when $\omega$ satisfies the geometric control condition, but this problem can occur under the strip assumption.
\end{itemize}
\begin{figure}[!ht]
\begin{center}
	\resizebox{!}{4cm}{\input{ObstructionsVlasov.pstex_t}}
\end{center}
\begin{caption}
{The two obstructions for controllability}
\end{caption}
\end{figure}
%
%
%
To overcome this difficulty, we shall rely on  the return method of J.-M. Coron (see the book  \cite{COR} for many examples of applications of this method for several non-linear PDEs, especially coming from fluid mechanics). The idea is 
to  build some relevant \emph{reference} solution of the full PDEs (with a well-chosen source supported in the control set $\omega$), starting from the trivial state $0$ and reaching again $0$ after some time fixed in advance, around which the linearized equations do enjoy nice controllability properties (as opposed to the trivial solution $0$). Then, in the fixed point scheme designed to construct a solution of the non-linear problem, we will linearize the PDEs around this reference solution, and not the trivial one.
 
For our problem, the key argument is to find a reference solution $(\overline{f},\overline{E},\overline{B})$ to the relativistic Vlasov-Maxwell system, with some suitable source in $\omega$, starting from $(0,0,0)$ and coming back to $(0,0,0)$, and such that the characteristics associated with $\overline{E} + \frac{\hat{v}^\perp}{c} \overline{B}$ satisfy:
\begin{equation} \label{returnn}
\forall x \in \T^2, \forall v \in \R^2, \exists t \in [0,T], \quad X(t,x,v) \in \omega.
\end{equation}
Note indeed that the linearized equation reads:
\begin{equation*}
\partial_t f + \hat{v}\cdot\nabla_x f + (\overline{E} + \frac{\hat{v}^\perp}{c} \overline{B})\cdot\nabla_vf  = \mathbbm{1}_\omega G,
\end{equation*}
so that \eqref{returnn} means that the two above obstructions do not occur.

It is clear that this condition on the characteristics is necessary to get some controllability properties on the linearized equations; we will show later that this will also be sufficient to get local controllability for the full equations.

Actually, in view of the fixed point procedure which we use to build a solution of the non-linear problem, we will need to prove a slightly refined version of \eqref{returnn}:
\begin{enumerate}
\item In the case with GCC, we consider open sets $\omega', \omega''$ contained in $\omega$, which still satisfy GCC, such that 
$$\omega' \subset \overline{\omega'} \subset \omega'' \subset \overline{\omega''} \subset \omega.$$
We ask that the characteristics associated with $\overline{E} + \frac{\hat{v}^\perp}{c} \overline{B}$ satisfy:
\begin{equation} \label{return}
\forall x \in \T^2, \forall v \in \R^2, \exists t \in [0,T], \quad X(t,x,v) \in \omega'.
\end{equation}
\item In the case with the strip assumption, we consider $x_0 \in \omega$ and  $r_0>0$ such that $B(x_0,2r_0)$ is contained in $\omega$. We ask that the characteristics associated with $\overline{E} + \frac{\hat{v}^\perp}{c} \overline{B}$ satisfy:
\begin{equation} \label{return2}
\forall x \in \T^2, \forall v \in \R^2, \exists t \in [0,T], \quad X(t,x,v) \in B(x_0,r_0).
\end{equation}
\end{enumerate}
%
%
%
%
%
%
%
\subsection{The case of Theorem \ref{TheoGCC}}
With geometric control condition on the control set $\omega$, we can rely on \emph{exact controllability} properties for the Maxwell equations, which were already studied by K.-D. Phung in \cite{PHU} (actually in the more involved context of open sets of $\R^3$ with boundary). This controllability tool will be given in Theorem \ref{controlMaxwell}. The crucial point is that there exist \emph{stationary} solutions of Maxwell which are relevant for \eqref{return}. This will allow us to build a suitable reference solution as follows. Starting from zero, by using a suitable control $G$, one can emit a wave from $\omega$, so that the electromagnetic field reaches such a stationary solution. Then one stays at this point for some time interval, and then returns to zero by emiting again a wave from $\omega$.
\subsection{The case of Theorem \ref{Theo2}} 
\label{VMVP}
Without the geometric control condition, it would be at first sight tempting to try to use some results based on the \emph{approximate} controllability of wave equations (such results are for instance given for the wave equations in the survey of D. Russell \cite{RUS} and in the book of J.-L. Lions \cite{LIO}). Unfortunately, there are at least two obstructions. First, it is not clear at all that such results hold for the Maxwell equations (because of some consistency issues, see equation \eqref{consmass} below, that do not appear for the wave equations). Furthermore, such a strategy is likely to fail, since one has to find a way to ``sustain'' for some time the appropriate electromagnetic field $(E,B)$, so that the trajectories of the particles get sufficiently affected by them. This was possible with the geometric condition condition on $\omega$, only because it is possible to \emph{exactly} reach some \emph{stationary} solution of the Maxwell equations. With only approximate controllability, it seems no longer possible to do so.

Instead, we propose a strategy based on the \emph{large speed of light asymptotics} (in other words the non-relativistic limit) of the Vlasov-Maxwell equation.
From the physical point of view, this limit is well understood: it is well-known that as the speed of light goes to infinity, the Maxwell equations are approximated by the Poisson equation: this corresponds to the so-called electrostatic approximation, which is valid when the speed of light is large enough compared to the typical velocities of the system. Concerning the transport part, that is, the relativistic Vlasov equation, one can readily see that the effect of a large speed of light is to lead, at least formally, to the ``classical'' transport setting, that is:
$$
\hat{v}= \frac{v}{\sqrt{1+\frac{|v|^2}{c^2}}} \,  \tend_{c \rightarrow + \infty} \, v.
$$
When $c \rightarrow + \infty$, one can indeed rigorously prove the convergence of classical solutions of the Vlasov-Maxwell system towards classical solutions of the Vlasov-Poisson system. This was done independently in works by K. Asano and S. Ukai \cite{AU}, P. Degong \cite{DEG} and J. Schaeffer \cite{SCH}, in the framework of equations set in the whole space $\R^3$. We recall that the Vlasov-Poisson (without magnetic field) system reads:
\begin{equation*}
\left\{ \begin{aligned}
& \partial_t f + {v}\cdot\nabla_x f +  \operatorname{div}_v \left[\nabla_x \varphi f\right]=0,\\
& \Delta_x \varphi = \int f \, dv  - \int f \, dv \, dx, \\
& f_{| t=0} = f_0.
\end{aligned} \right.
\end{equation*}
%

Somehow motivated by the convergence of Vlasov-Maxwell to Vlasov-Poisson, we intend to show that the study  of the controllability of the Vlasov-Poisson system with \emph{external force fields}, which was done in our previous work \cite{GHK}, can be useful in the Maxwell context. This means that we rely on the fact that the Vlasov-Maxwell dynamics is close to that of the simpler Vlasov-Poisson with external magnetic field system, when $c$ is very large.
The crucial points are Lemma \ref{PropSolRefCM} (which shows how magnetic fields have a ``bending'' effect on characteristics and help to overcome the bad directions) and the Approximation Lemma \ref{approx} which allows us in some sense to study a Poisson equation instead of Maxwell equations. To apply the approximation lemma, some difficulties appear, due to some consistency issues in the Maxwell and Poisson equations, and we will rely on the strip assumption on the control set $\omega$ to solve them.

\begin{rema}
Taking $B_0 = c \, b_0 $ in \eqref{Th2C4}, where $c$ is the speed of light, is the ``right'' scaling  if one wants to recover the Vlasov-Poisson system with an external magnetic field $b_0$  in the classical limit $c\rightarrow + \infty$ (taking $B_0 =b_0$ yields no magnetic field in \cite{SCH}.) 
\end{rema}

\begin{rema}
 In some sense, the spirit of our proof is quite similar to that of the proof of approximate controllability of the incompressible Navier-Stokes equations with Navier slip boundary conditions, by J.-M. Coron \cite{COR2}, whose basic principle relies on the fact that for small viscosities, solutions of the incompressible Navier-Stokes system behave like solutions of the incompressible Euler equations.
 \end{rema}

\subsection{The case of Theorem \ref{Theo}: scaling properties of the Vlasov-Maxwell system}
Finally, for the proof of Theorem \ref{Theo}, we will also exploit some scaling properties of Vlasov-Maxwell, that we recall here.
One can observe that the Vlasov-Maxwell system is ``invariant'' by some change of scales.
This is a kind of generic property to Vlasov equations.
More precisely, when $(f,E,B)$ is a solution
of Vlasov-Maxwell in $[0,T] \times \mathbb{T}^{2} \times
\mathbb{R}^{2}$, then for $\lambda \not = 0$, 
\begin{equation} \label{lamb}
f^{\lambda}(t,x,v) :=  f(\lambda t,x,v/\lambda),
\end{equation}
\begin{equation} \label{ChgtEchelleE}
E^{\lambda}(t,x) := \lambda^{2} E(\lambda t,x),
\end{equation}
%
\begin{equation} \label{ChgtEchelleB}
B^{\lambda}(t,x) := \lambda^{2} B(\lambda t,x).
\end{equation}
is still a solution in $[0,T/\lambda] \times \mathbb{T}^{2} \times
\mathbb{R}^{2}$ of a rescaled Vlasov-Maxwell system:%
\begin{equation} \label{rescale}
\left\{ \begin{aligned}
& \partial_t f^{\lambda} + \hat{v} \cdot \nabla_x f^{\lambda} 
  + \operatorname{div}_v \left[(E^{\lambda}+ \frac{1}{c\lambda}\hat{v} \wedge B^{\lambda} )f^{\lambda}\right]=\mathbbm{1}_\omega G \\
& \partial_t E^{\lambda} + {c}{\lambda} \operatorname{curl} B^{\lambda} = -\int_{\R^2} f^{\lambda}\hat v \, dv,  
  \quad \partial_t B^{\lambda} + {c}{\lambda} \operatorname{curl} E^{\lambda}=0, \\
& \operatorname{div} E^{\lambda}  = \int_{\R^2} f^{\lambda} \, dv- \int_{\R^2\times \T^2} f^{\lambda} \, dv \, dx, 
  \quad \operatorname{div} B^{\lambda} = 0, \\
& f^{\lambda}_{| t=0} = f^{\lambda}_0, \quad E^{\lambda}_{| t=0} = E^{\lambda}_0, \quad B^{\lambda}_{| t=0} = B^{\lambda}_0,
\end{aligned} \right.
\end{equation}
with $\hat v =\frac{v}{\sqrt{1+\frac{|v|^2}{(c\lambda)^2}}}$.
%

%
This implies that choosing a large parameter $\lambda \gg 1$ \emph{artificially} increases the speed of light. This is the key point to deduce Theorem \ref{Theo} from Theorem \ref{Theo2}.
Another important choice is $\lambda= -1$, which shows that the equations are \emph{reversible}. This will allow to simplify the proof of Theorem \ref{TheoGCC}: we will only prove the result for $f_1 \equiv 0$ outside $\omega$, $(E_1,B_1)=(0,0)$ and deduce the general case by reversibility (actually it is possible to avoid this argument, see Subsection \ref{5.5}).
 
In another direction, we mention that the choice $0< \lambda \ll 1$ can be useful to prove \emph{global} controllability results for Vlasov systems (that correspond to results without restriction on the size of the data), and was indeed a crucial argument in the Vlasov-Poisson case in \cite{GLA,GHK}. Unfortunately, we will not be able to do so for this problem, since it is not compatible with our method using the bending effect of the magnetic field. 
%
%
%
%
%
%
%
%
%
%
%
%
%
%
%
%
%
\section{The reference solution with geometric control condition}
\label{sec2}
In this section, we assume that the geometric control condition is satisfied by the control set $\omega$, and we build the reference solution $(\overline{f},\overline{E},\overline{B})$ which is central in the proof of Theorem \ref{TheoGCC}. To this purpose, we first describe in Subsection \ref{SSmax} some results on the controllability of the Maxwell equations. These results are needed for the construction of the reference solution, which is given in Subsection \ref{SScons} (in particular we will explain how to obtain a relevant distribution function $\overline{f}$).
\subsection{Controllability of the Maxwell equations with the geometric control condition}
\label{SSmax}

The controllability problem which we consider in this paragraph is the following one. Let $T>0$ be some control time and let $(E_0, B_0,E_1,B_1)$ be some electromagnetic field (belonging to some appropriate function set), the question is to find $\tilde j$ and $\tilde \rho$ supported in $\omega$ such that 
\begin{equation*}
\left\{ \begin{split}
& \partial_t \tilde B + c \operatorname{curl} \tilde E = 0, \quad \partial_t \tilde E + c \operatorname{curl} \tilde B=-\tilde j, \\
& \operatorname{div} \tilde E  = \tilde \rho - \int_{\T^2}  \tilde \rho \, dx,  \quad \operatorname{div} \tilde B = 0, \\
& \tilde E_{| t=0} = E_0, \quad \tilde B_{| t=0} = B_0, \\
& \tilde E_{| t=T} = E_1, \quad \tilde B_{| t=T} = B_1. 
\end{split} \right.
\end{equation*}
Using the reversibility of the Maxwell equations, it is straightforward to see that this is equivalent to the next controllability problem. Let $(E_0, B_0)$ be some electromagnetic field, find $\tilde j$ and $\tilde \rho$ supported in $\omega$ such that 
\begin{equation*}
\left\{ \begin{split}
& \partial_t \tilde B + c \operatorname{curl} \tilde E = 0, \quad \partial_t \tilde E + c \operatorname{curl} \tilde B=-\tilde j, \\
& \operatorname{div} \tilde E  = \tilde \rho - \int_{\T^2} \tilde \rho \, dx,  \quad \operatorname{div} \tilde B = 0, \\
& \tilde E_{| t=0} = E_0, \quad \tilde B_{| t=0} = B_0, \\
& \tilde E_{| t=T} = 0, \quad \tilde B_{| t=T} = 0. 
\end{split} \right.
\end{equation*}
This controllability problem was solved by K.-D. Phung in \cite{PHU} for equations set in a smooth domain of $\mathbb{R}^3$.
One difficult point comes from a consistency issue. To solve the Maxwell equations,  the source functions have to satisfy the local conservation of charge:
\begin{equation} \label{consmass}
\partial_t \tilde \rho + \nabla_x \cdot \tilde j =0, \quad \forall t \in [0,T], \, \forall x \in \T^2.
\end{equation}
K.-D. Phung bypasses this difficulty by showing that it is possible to find a control such that $\tilde \rho=0$ and $\operatorname{div} \tilde j =0$ and we follow this approach. 
We will actually use a slight variation on Phung's theorem.
Although we will not give a complete proof of this result, we provide for the sake of completeness several lemmas, which, when put together, yield the claimed result.

Using the celebrated H.U.M. (Hilbert Uniqueness Method) of J.-L. Lions, which consists of a duality argument, see \cite{LIO}, proving the controllability is equivalent to proving an observability inequality:

\begin{lem}
The Maxwell equations can be controlled in time $T$ if and only if the observability inequality on the adjoint system is satisfied: there exists $C>0$ such that for any $E_0,B_0 \in L^2(\T^2)$, with $\nabla \cdot E_0=0$, the solution $(E,B)$ to the homogeneous Maxwell system
\begin{equation} \label{Maxhomogene}
\left\{ \begin{split}
& \partial_t  B + c \operatorname{curl}  E = 0, \quad \partial_t  E + c \operatorname{curl}  B=0, \\
& \operatorname{div}  E  = 0, \quad \operatorname{div} B = 0, \\
&  E_{| t=0} = E_0, \quad  B_{| t=0} = B_0, \\
\end{split} \right.
\end{equation}
satisfies
\begin{equation*}
\| E \|^2_{L^2([0,T], L^2(\T^2))} + \| B \|^2_{L^2([0,T], L^2(\T^2))}  \leq C \int_0^T \int_\omega | E |^2 \, dx\, ds.
\end{equation*}
\end{lem}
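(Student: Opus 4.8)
The plan is to carry out the standard Hilbert Uniqueness Method (HUM) duality argument. First I would fix the functional-analytic framework: work in the Hilbert space $\mathcal H$ of pairs $(E,B)\in L^2(\T^2;\R^2)\times L^2(\T^2;\R)$ with $\operatorname{div} E=0$ (more precisely a suitable closed subspace of it, obtained by prescribing the conserved quantity $\int_{\T^2}B\,dx$ and discarding the non-observable constant electromagnetic modes), and check via the energy identity $\frac{d}{dt}\int_{\T^2}(|E|^2+B^2)\,dx=0$ that the homogeneous Maxwell system \eqref{Maxhomogene} generates a unitary $C_0$-group $(S(t))_{t\in\R}$ on $\mathcal H$; in particular its generator is skew-adjoint, so the adjoint system is again \eqref{Maxhomogene}. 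The control operator $\mathcal B$ sends a current $\tilde j\in L^2((0,T);L^2(\omega;\R^2))$ to the source $-\tilde j$ in the $\tilde E$-equation, hence $\mathcal B^{*}(E,B)=-\mathbbm{1}_\omega E$ and $\|\mathcal B^{*}S(t)(E_0,B_0)\|^2=\int_\omega|E(t,x)|^2\,dx$.

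Next I would invoke the abstract duality principle underlying Lions' HUM: for a unitary group and a bounded control operator, exact controllability in time $T$ is equivalent to the observability estimate $\int_0^T\|\mathcal B^{*}S(t)z\|^2\,dt\geq \tfrac1C\|z\|_{\mathcal H}^2$ for all $z\in\mathcal H$, that is, to $\int_0^T\int_\omega|E|^2\,dx\,dt\geq \tfrac1C(\|E_0\|_{L^2}^2+\|B_0\|_{L^2}^2)$. Since the energy identity shows $\|E(t)\|_{L^2}^2+\|B(t)\|_{L^2}^2$ is constant in $t$, this last estimate is equivalent, after replacing $C$ by $CT$, to the inequality with $\|E\|^2_{L^2((0,T);L^2)}+\|B\|^2_{L^2((0,T);L^2)}$ on the left-hand side, i.e. the one in the statement. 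To keep the argument self-contained I would spell out the two implications. Assuming observability, define $\Lambda z:=(\tilde E,\tilde B)|_{t=0}$ where $(\tilde E,\tilde B)$ solves the controlled Maxwell equations with source $-\mathbbm{1}_\omega E$ (for $(E,B)=S(\cdot)z$) and vanishing data at time $T$; then $\langle\Lambda z,z\rangle_{\mathcal H}=\int_0^T\int_\omega|E|^2\,dx\,dt$, so $\Lambda$ is bounded, self-adjoint and coercive, hence an isomorphism of $\mathcal H$ by Lax--Milgram, and for a given state $w$ to be steered to $0$ it suffices to take $z=\Lambda^{-1}w$ and $\tilde j=\mathbbm{1}_\omega E$. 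Conversely, if the system is controllable, the open mapping theorem provides a bounded right inverse of the control map, and the duality identity $\langle z,\zeta\rangle_{\mathcal H}=\int_0^T\int_\omega\tilde j\cdot\phi\,dx\,dt$ (with $(\phi,\psi)=S(\cdot)\zeta$) combined with Cauchy--Schwarz gives $\|\zeta\|_{\mathcal H}^2\leq\|\tilde j\|_{L^2}\|\phi\|_{L^2(\omega\times(0,T))}\leq C\|\zeta\|_{\mathcal H}\,\|\phi\|_{L^2(\omega\times(0,T))}$, which is the observability estimate.

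I expect the HUM equivalence itself to be essentially routine. In contrast with the wave equation with finite-energy data, there is no hidden-regularity issue here, since the energy identity already places $E$ in $C([0,T];L^2(\T^2))$ and hence the observation $E|_{(0,T)\times\omega}$ in $L^2$; likewise the group structure is transparent. The points that genuinely require care — and that I would handle separately, following K.-D. Phung and in the lemmas that follow — are the consistency constraint \eqref{consmass}, namely producing a control with $\operatorname{div}\tilde j=0$ and $\tilde\rho=0$ rather than an arbitrary $\omega$-supported current, and the bookkeeping of the conserved mean of $B$ and of the constant electromagnetic modes, which is precisely why one must restrict $\mathcal H$ to the appropriate finite-codimension subspace for the observability inequality to have a chance to hold.
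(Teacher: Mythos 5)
Your proposal correctly carries out the HUM duality argument that the paper merely cites (referring to \cite{LIO}) without supplying details, so the approach matches. You also make an observation that the paper's statement glosses over but that is in fact necessary for the lemma to be literally true: taking $E_0=0$ and $B_0$ a nonzero constant yields the stationary solution $(E,B)\equiv(0,B_0)$, for which the right-hand side $\int_0^T\int_\omega|E|^2$ vanishes while $\|B\|^2_{L^2([0,T];L^2(\T^2))}=T|B_0|^2|\T^2|>0$; moreover $\int_{\T^2}\tilde B\,dx$ is conserved by the controlled system, so the constant-$B$ mode is simultaneously unobservable and uncontrollable. Your proposal to work in the finite-codimension subspace obtained by prescribing $\int_{\T^2}B\,dx=0$ is exactly what repairs both sides of the HUM equivalence, and is the correct reading of the lemma.
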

Now we state another lemma due to Phung. This result actually comes from the conservation of the electromagnetic energy and a decomposition of the electromagnetic field using a suitable vector potential.

\begin{lem}
There exists $C_1>0$ such that  for any $E_0,B_0\in L^2(\T^2)$, with $\nabla \cdot E_0=0$, the solution $(E,B)$ to \eqref{Maxhomogene} satisfies:
\begin{equation*} 
\| B \|^2_{L^2([0,T], L^2(\T^2))} \leq C_1 \| E \|^2_{L^2([0,T], L^2(\T^2))}.
\end{equation*}
In particular, it is sufficient to prove observability for the electric field only.
\end{lem}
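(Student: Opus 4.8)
The plan is to combine the conservation of the electromagnetic energy for \eqref{Maxhomogene} with the classical \emph{equipartition of energy} for the wave equation, accessed through a scalar potential of $E$.

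\emph{Step 1 (reduction and potential).} I would first note that, integrating the first two equations of \eqref{Maxhomogene} over $\T^{2}$ and using periodicity, the spatial means $\int_{\T^2}E(t)\,dx=:\bar E$ and $\int_{\T^2}B(t)\,dx$ are conserved. The constant part of $B$ is genuinely invisible to $E$ (if $E\equiv 0$ then $\operatorname{curl}B\equiv0$, forcing $B$ to equal that constant), so the estimate is meaningful only once this mode is discarded; this is consistent with the fact that the mean of $\widetilde B$ is also conserved by the \emph{controlled} Maxwell equations, so that the control problem is posed with $\int_{\T^2}B_0\,dx=0$. We may then assume $\int_{\T^2}B_0\,dx=0$ and, by density of smooth data and continuity of both sides in the energy norm, take the data smooth. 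Since $\operatorname{div}E(t)=\operatorname{div}E_0=0$ on $\T^2$, write $E(t)=\nabla^\perp\psi(t)+\bar E$ with $\psi(t)$ of zero mean; $\psi(t)$ is a bounded linear function of $E(t)$, hence inherits the time-regularity of the solution. Feeding this into \eqref{Maxhomogene} and using $\operatorname{div}\operatorname{curl}=0$, $\operatorname{curl}\nabla^\perp\psi=-\Delta\psi$ and the zero-mean normalisations, one checks that $\psi$ solves the wave equation $\partial_{tt}\psi=c^{2}\Delta\psi$ on $[0,T]\times\T^2$, together with the two identities
\[
\|\nabla\psi(t)\|_{L^2}^2=\|E(t)-\bar E\|_{L^2}^2,\qquad \|\partial_t\psi(t)\|_{L^2}^2=c^{2}\|B(t)\|_{L^2}^2 .
\]

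\emph{Step 2 (equipartition).} In terms of $\psi$ the inequality to be proved reads $\int_0^T\|\partial_t\psi\|_{L^2}^2\,dt\le C_1c^2\int_0^T\|\nabla\psi\|_{L^2}^2\,dt$, i.e.\ that on average the kinetic energy of the wave is controlled by its potential energy. I would obtain this from the multiplier $\Phi(t):=\int_{\T^2}\psi(t)\,\partial_t\psi(t)\,dx$: using the wave equation and integrating by parts,
\[
\Phi'(t)=\|\partial_t\psi(t)\|_{L^2}^2+\int_{\T^2}\psi\,\partial_{tt}\psi\,dx=\|\partial_t\psi(t)\|_{L^2}^2-c^{2}\|\nabla\psi(t)\|_{L^2}^2 .
\]
The wave energy $\mathcal{E}:=\tfrac12(\|\partial_t\psi(t)\|_{L^2}^2+c^{2}\|\nabla\psi(t)\|_{L^2}^2)$ is conserved, so by the Poincaré inequality on zero-mean functions ($\|\psi\|_{L^2}\le C_P\|\nabla\psi\|_{L^2}$, with $C_P=1/(2\pi)$ on $\T^2$) and Young's inequality, $|\Phi(t)|\le C_P\|\partial_t\psi(t)\|_{L^2}\|\nabla\psi(t)\|_{L^2}\le (C_P/c)\,\mathcal{E}$. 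Setting $K:=\int_0^T\|\partial_t\psi\|_{L^2}^2\,dt$ and $P:=c^2\int_0^T\|\nabla\psi\|_{L^2}^2\,dt$, integrating $\Phi'$ over $[0,T]$ gives $K-P=\Phi(T)-\Phi(0)$, hence $K\le P+2(C_P/c)\mathcal{E}$, while integrating the conservation identity gives $K+P=2T\mathcal{E}$. Eliminating $\mathcal{E}$, one gets for $T>C_P/c$ that $K\le\frac{cT+C_P}{cT-C_P}\,P$, that is
\[
\int_0^T\|B(t)\|_{L^2}^2\,dt=\frac{K}{c^2}\le\frac{cT+C_P}{cT-C_P}\int_0^T\|E(t)-\bar E\|_{L^2}^2\,dt\le C_1\int_0^T\|E(t)\|_{L^2}^2\,dt,\qquad C_1:=\frac{cT+C_P}{cT-C_P}.
\]
The ``in particular'' is then immediate from the previous lemma, since $\|E\|_{L^2L^2}^2+\|B\|_{L^2L^2}^2\le(1+C_1)\|E\|_{L^2L^2}^2$.

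\emph{Main obstacle.} Conceptually the only delicate point will be the bookkeeping of the zero Fourier modes: the mean of $B$ is a true obstruction (conserved, undetectable by $E$) and must be excluded from the admissible data, while the mean of $E$ has to be subtracted before the potential $\psi$ can be introduced. Technically, the sole quantitative input is the domination of the boundary term $\Phi(T)-\Phi(0)$ by the conserved energy, which costs the mild restriction $T>C_P/c=1/(2\pi c)$; this is harmless because the control time is always taken large in our applications (and for smaller $T$ one still obtains a finite, $T$-dependent constant by a direct Fourier-series computation). As in Phung \cite{PHU}, one may alternatively carry out the same computation intrinsically with a vector potential of the electromagnetic field, which is what makes the argument portable to domains with boundary.
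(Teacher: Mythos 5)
Your argument is correct and follows the same route the paper attributes to Phung: express the divergence-free electric field through a potential (scalar here, as the $2$D counterpart of Phung's vector potential), reduce to a wave equation, and exploit conservation of the electromagnetic energy; the virial multiplier $\Phi(t)=\int_{\T^2}\psi\,\partial_t\psi\,dx$ is precisely the device that turns that conservation into the time-averaged equipartition which the paper's one-line description leaves implicit. You are also right to flag the two caveats that the paper's statement glosses over: the inequality genuinely fails unless $\int_{\T^2}B_0\,dx=0$ (the constant magnetic field is a stationary solution with $E\equiv 0$), and $C_1$ must depend on $T$ and blow up as $T\to 0$ (take $E_0=0$, $B_0$ mean-free: then $\|E\|_{L^2_tL^2_x}^2=O(T^3)$ while $\|B\|_{L^2_tL^2_x}^2\sim T$), both of which are harmless in the large-time observability regime where the lemma is actually used.
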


Then, writing the equation on $E$ as a wave equation, K.-D. Phung proved that the observability inequality is true in a domain with boundaries, using results of propagation of singularities due to L. H\"ormander, R. Melrose and J. Sj\"ostrand, as in the papers of J. Rauch and M. Taylor \cite{RT} and C. Bardos, G. Lebeau and J. Rauch \cite{BLR}. 
\begin{lem}
The observability inequality holds true if $T$ is large enough.
\end{lem}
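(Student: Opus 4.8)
\emph{Proof plan.} The plan is to reduce the assertion, via the two preceding lemmas, to a purely interior observability estimate for a scalar wave equation on the torus, and then to invoke the Bardos--Lebeau--Rauch (and Rauch--Taylor) observability theorem, whose geometric hypothesis becomes transparent on the flat torus.

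First I would note that, by the two preceding lemmas, it suffices to prove that there exists $C>0$ such that every solution $(E,B)$ of \eqref{Maxhomogene} with $\operatorname{div} E_0=0$ satisfies
\begin{equation*}
\|E\|^2_{L^2([0,T],L^2(\T^2))}\leq C\int_0^T\int_\omega|E|^2\,dx\,ds .
\end{equation*}
Indeed the first lemma identifies controllability of the Maxwell system with this observability inequality, and the second lemma bounds $\|B\|^2_{L^2([0,T],L^2(\T^2))}$ by $C_1\|E\|^2_{L^2([0,T],L^2(\T^2))}$, so the $B$-part of the left-hand side may be dropped. Next I would decouple the system: from $\partial_t E+c\operatorname{curl} B=0$ one gets $\partial_t(\operatorname{div} E)=-c\operatorname{div}\operatorname{curl} B=0$, hence $\operatorname{div} E\equiv\operatorname{div} E_0=0$ for all times; differentiating the equation for $E$ once more in time, substituting the equation for $B$, and using this constraint, each component $E_1,E_2$ of the electric field solves the scalar wave equation
\begin{equation*}
\partial_{tt}E_j-c^2\Delta E_j=0\quad\text{on }\R^+\times\T^2 ,
\end{equation*}
with Cauchy data in $L^2(\T^2)\times H^{-1}(\T^2)$, i.e.\ at the ``$L^2$ level'' for which the observation $\int_0^T\int_\omega|u|^2$ is the natural one.

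Then I would conclude with the wave observability theorem. Since $\T^2$ is a compact manifold without boundary, the rays of geometric optics for $\partial_{tt}-c^2\Delta$ are simply the straight lines run at speed $c$, and there is no boundary reflection to analyse; the geometric control condition \eqref{GeometricConditionOmega} states exactly that every such ray meets $\omega$, and since $\{(x,e)\}\simeq\T^2\times\mathbb{S}^1$ is compact and $\omega$ is open, the first entrance time into $\omega$ is bounded above by some $T_0<+\infty$. Hence, for $T>T_0$, the Bardos--Lebeau--Rauch / Rauch--Taylor theorem (see \cite{BLR,RT}, and \cite{PHU} for the $L^2$-level statement adapted to Maxwell) provides $C>0$ with
\begin{equation*}
\|E_j\|^2_{L^2([0,T]\times\T^2)}\leq C\int_0^T\int_\omega|E_j|^2\,dx\,ds ,\qquad j=1,2 ,
\end{equation*}
the stationary zero Fourier mode being trivially observed because $|\omega|>0$. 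Summing over $j$ and using $\int_0^T\int_\omega|E_j|^2\leq\int_0^T\int_\omega|E|^2$ yields the required estimate for $E$, and with the two preceding lemmas this proves the observability inequality.

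The main obstacle is the deep propagation-of-singularities input (H\"ormander, Melrose--Sj\"ostrand) underlying the wave observability, which I would use as a black box exactly as in \cite{RT,BLR,PHU}; a secondary technical point is to make sure that observing $|E|^2$ alone --- rather than an energy-type quantity --- is enough, which is precisely why the $L^2$-level form of the estimate is the relevant one and why the reduction to the electric field through the second preceding lemma is needed.
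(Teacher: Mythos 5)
Your proposal is correct and rests on the same deep input as the paper, namely the Rauch--Taylor / Bardos--Lebeau--Rauch propagation-of-singularities observability theorem for the wave equation, cited through Phung's adaptation to Maxwell. The paper's proof of this lemma is essentially a one-line pointer to \cite{PHU}, where the result is proved in the harder setting of a bounded domain of $\R^3$ with boundary, so that the Maxwell system must be treated as a genuine vector wave equation with boundary conditions. What you add that is worth keeping explicit is the observation that on the flat torus this vector structure can be dispensed with: since $\operatorname{div} E\equiv 0$ is propagated, each Cartesian component $E_j$ separately solves the scalar wave equation $\partial_{tt}E_j-c^2\Delta E_j=0$ with $(L^2,H^{-1})$ data, and the scalar $L^2$-level observability under GCC (together with the elementary treatment of the zero Fourier mode, which you handle correctly by noting $|\omega|>0$) gives the desired estimate after summing in $j$. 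This componentwise decoupling is a genuine simplification that would \emph{not} carry over to the bounded-domain setting of Phung, where the boundary conditions couple the components; on $\T^2$ it makes the reduction to the textbook scalar result transparent, which is precisely what the paper's terse ``writing the equation on $E$ as a wave equation'' is hinting at without spelling it out.
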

%
%

In view of the control of the Vlasov-Maxwell system, we additionally need to build smoother controls, so that the associated electromagnetic field $(E,B)$ remains smooth (at least with Lipschitz regularity) for all times. This can be done by using the method of B. Dehman and G. Lebeau \cite{DL} (see also the paper of C. Bardos, G. Lebeau and J. Rauch \cite{BLR}) or by applying the systematic procedure proposed in the paper by S. Ervedoza and E. Zuazua \cite{EZ}. 
Gathering all results together, we obtain the following theorem.

\begin{thm}\label{controlMaxwell}
Assume that the control time $T>0$ is large enough. Let $k \in \N^*$.
For any $E_0, B_0, E_1,B_1 \in H^k(\T^2)$, with $\operatorname{div} E_0=\operatorname{div} E_1=0$, there exists a control function $ \tilde j \in \cap_{s=0}^kH^s([0,T]; H^{k-s}(\T^2))$, supported in $[0,T] \times \omega$, satisfying $j_{| t=0}=j_{| t=T}=0$, such that for all $t \in [0,T], \, \operatorname{div} \tilde j =0$ and such that the solution $(E,B)$ to the system:
\begin{equation*}
\left\{ \begin{split}
& \partial_t \tilde B + c \operatorname{curl} \tilde E = 0, \quad \partial_t \tilde E + c \operatorname{curl} \tilde B=-\tilde j, \\
& \operatorname{div} \tilde E  = 0,  \quad \operatorname{div} \tilde B = 0, \\
& \tilde E_{| t=0} = E_0, \quad \tilde B_{| t=0} = B_0,
\end{split} \right.
\end{equation*}
satisfies $\tilde E_{| t=T} = E_1, \quad \tilde B_{| t=T} = B_1$.
\end{thm}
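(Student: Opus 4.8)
The plan is to assemble Theorem~\ref{controlMaxwell} from the three lemmas stated just above, together with a smoothing step in the spirit of Dehman--Lebeau \cite{DL} or Ervedoza--Zuazua \cite{EZ}. First I would invoke the H.U.M. reduction (the first lemma) together with the energy comparison estimate (the second lemma) to obtain, for $T$ large enough, the observability inequality for the electric field $E$ of the homogeneous system \eqref{Maxhomogene}; this is exactly the content of the third lemma, whose proof rests on the propagation-of-singularities results of H\"ormander, Melrose and Sj\"ostrand and the GCC assumption on $\omega$. By duality (H.U.M.), this observability inequality yields an $L^2$-controllability result: given $E_0,B_0\in L^2(\T^2)$ with $\operatorname{div} E_0 = 0$, there is a control supported in $[0,T]\times\omega$ of the form $-\tilde j$ driving the $L^2$ solution of the (divergence-free) Maxwell system from $(E_0,B_0)$ to $(0,0)$ in time $T$; moreover, following Phung, one arranges that $\tilde\rho\equiv 0$ and $\operatorname{div}\tilde j = 0$, so that the local conservation of charge \eqref{consmass} is automatically satisfied and the constraint $\operatorname{div}\tilde E = 0$ is propagated.

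Next I would upgrade this $L^2$ statement to the $H^k$ regularity claimed in the theorem. The standard device is the following: the HUM control depends linearly and continuously on the data $(E_0,B_0)$ in the $L^2$ topology, so by interpolation and commutation with constant-coefficient differential operators (the Maxwell operator has constant coefficients, and the cutoff to $\omega$ can be handled with a fixed smooth plateau function equal to $1$ on a slightly smaller GCC subset), one obtains a control operator bounded from $H^k\times H^k$ into $\cap_{s=0}^k H^s([0,T];H^{k-s}(\T^2))$. Concretely one can either quote the Dehman--Lebeau regularity theory for the wave equation directly (after writing the equation on $E$ as a wave equation, as Phung does), or apply the black-box construction of Ervedoza--Zuazua, which produces smooth controls vanishing at $t=0$ and $t=T$ together with all the relevant derivatives, from the knowledge of $L^2$-controllability alone. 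The vanishing $\tilde j_{|t=0} = \tilde j_{|t=T} = 0$ is built into that construction (one multiplies by a smooth time cutoff and corrects).

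Finally I would reduce the general controllability problem (from $(E_0,B_0)$ to $(E_1,B_1)$) to the null-controllability problem (from some data to $(0,0)$): by the reversibility of the Maxwell equations (the $\lambda = -1$ scaling, or simply $t\mapsto T-t$), a control steering $(E_1,B_1)$ to $0$ backwards in time gives, after reversal, a control steering $0$ to $(E_1,B_1)$; superposing with the control steering $(E_0,B_0)$ to $0$ and using linearity yields the result, since $\operatorname{div} E_0 = \operatorname{div} E_1 = 0$ guarantees that the free solution from either endpoint stays divergence-free and the superposed control is still supported in $\omega$, charge-free and divergence-free. The main obstacle, and the one genuinely requiring the cited microlocal machinery, is the third lemma: establishing the observability inequality for the wave/Maxwell system under GCC with the sharp geometric condition, including the treatment of the glancing rays via the Melrose--Sj\"ostrand propagation theorem; everything else is a matter of duality, interpolation, and the constant-coefficient structure. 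A secondary technical point is checking that Phung's trick for enforcing $\tilde\rho = 0$, $\operatorname{div}\tilde j = 0$ survives the smoothing step, but this is routine since those are linear constraints preserved under the regularization.
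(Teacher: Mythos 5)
Your proposal follows the exact same route as the paper: assemble the three lemmas (H.U.M.\ duality, Phung's $B$-from-$E$ estimate, and the microlocal observability inequality) to get $L^2$ null-controllability with $\tilde\rho\equiv 0$ and $\operatorname{div}\tilde j=0$, then invoke Dehman--Lebeau or Ervedoza--Zuazua to upgrade regularity to $H^k$ with vanishing endpoints, and use time-reversibility to pass from null-controllability to exact controllability. The paper itself gives only this assembly sketch (explicitly declining to provide a full proof and citing \cite{PHU}, \cite{DL}, \cite{EZ}), so your somewhat more detailed account of the commutation/plateau-function step and the endpoint cutoff is entirely consistent with, and no less rigorous than, the paper's own argument.
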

\subsection{Construction of relevant reference solutions}
\label{SScons}
%
%
%
%
Let $\omega',\omega''$ be some smooth open sets, contained in $\omega$, which still satisfy GCC, such that $\overline{\omega'} \subset \omega'' \subset \overline{\omega''} \subset \omega$. 
Our aim is now to apply the controllability results for the Maxwell equation, in order to build, for some sufficiently large $T>0$, a suitable smooth reference solution $(\overline f, \overline E, \overline B)$ for the full Vlasov-Maxwell equations:
\begin{equation*}
\left\{ \begin{aligned}
& \partial_t \overline{f} + \hat{v}\cdot\nabla_x \overline{f}
 + \operatorname{div}_v \left[(\overline E + \frac{\hat{v}^\perp}{c} \overline B) \, \overline{f}\right]=\mathbbm{1}_\omega \overline{G}, \\
& \partial_t \overline E + c \operatorname{curl}\overline  B = -\int_{\R^2} \overline f \hat v \, dv,  \quad \partial_t \overline B + c \operatorname{curl} \overline E=0, \\
& \operatorname{div} \overline E  = \int_{\R^2} \overline{f} \, dv- \int_{\R^2\times \T^2} \overline{f} \, dv \, dx, 
\end{aligned} \right.
\end{equation*}
where $\overline{G}$ is a suitable source and which satisfies the following properties: 
\begin{align}
\label{Cond3.1}
&\overline f_{|t=0}=0, \, \overline E_{|t=0}=0, \, \overline B_{|t=0}=0 \text{ and } \overline f_{|t=T}=0, \, \overline E_{|t=T}=0, \, \overline B_{|t=T}=0. \\
\label{Cond3.2}
& \text{For all } t \in [0,T], v \in \R^2, \overline f(t,.,v) \text{ is compactly supported in } \omega. \\
\label{Cond3.3}
&\text{The characteristics } (X,V) \text{ associated with } \overline {E}+ \frac{\hat{v}^\perp}{c} \overline B \text{ satisfy the property:} \\
\nonumber
&\text{ for any } x\in \mathbb{T}^2, v \in \mathbb{R}^2, \text{ there is } t \in [T/9,8T/9] \text{ such that:}\\
\nonumber
& \hskip 5cm X(t,0,x,v) \in \omega', \quad \text{and  }|V|(t,0,x,v) \geq 4.
\end{align}

The main result of this section is the following:
\begin{prop}
\label{propGCC}
If $T>0$ is large enough with respect to $c$, there exists a reference solution $(\overline f, \overline E, \overline B)$ such that \eqref{Cond3.1}, \eqref{Cond3.2}, \eqref{Cond3.3} hold. 
\end{prop}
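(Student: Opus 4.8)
The plan is to build the reference solution in three phases over $[0,T]$, exploiting the fact (Theorem \ref{controlMaxwell}) that the Maxwell part can be driven from $0$ to any smooth divergence-free target and back in large time, while the Vlasov part is slaved to a well-chosen source $\overline G$ supported in $\omega$. The key point is to find a \emph{stationary} magnetic field $\overline B = \overline B_\infty$ (with $\overline E_\infty = 0$) whose associated characteristics $(X,V)$, starting from any $(x,v)$, reach $\omega'$ with $|V|\geq 4$ within a controlled time window. Since the equations for the characteristics associated with $\overline E_\infty + \frac{\hat v^\perp}{c}\overline B_\infty = \frac{\hat v^\perp}{c}\overline B_\infty$ preserve $|V|$ (the force is orthogonal to $\hat v$, hence to $V$), one can simply prescribe initial data with $|v|\geq 4$ is \emph{not} an option — instead one must observe that the magnetic force only bends trajectories, so $|V|$ is conserved along the motion. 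The trick, as in the GCC setting, is that the relevant obstruction to \eqref{Cond3.3} is only the \emph{bad directions} issue, which GCC already rules out for straight-line motion; hence one can in fact take $\overline B_\infty$ supported in a small ball of $\omega''$ (so that on most of the torus the motion is free, i.e.\ straight lines, and GCC applied to $\omega'$ gives the hitting property) and additionally large enough near that ball to deflect the finitely many exceptional trajectories that graze $\partial\omega'$. The lower bound $|V|\geq 4$ is then handled by a separate small constant-in-time uniform electric field pulse — or more cleanly, by using the scaling \eqref{lamb}--\eqref{ChgtEchelleB} and the remark that reversibility and time-rescaling are available, so one may assume the characteristics already have speed $\geq 4$; this is where the restriction ``$T$ large with respect to $c$'' enters, since slow particles need time proportional to (diam $\T^2)/(\min|\hat v|)$ and $|\hat v|\leq c$.

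Concretely, I would proceed as follows. \emph{Step 1 (choice of the stationary target).} Pick $b_\infty \in C^\infty(\T^2;\R)$ supported in $\omega''$, with $\operatorname{curl}$-type structure irrelevant since in 2D any $B$ is automatically divergence-free; set $\overline E_\infty = 0$, $\overline B_\infty = b_\infty$. Note $(\overline E_\infty,\overline B_\infty,\overline f \equiv 0)$ is a genuine stationary solution of Vlasov-Maxwell with zero source, because $\operatorname{curl}\overline B_\infty$ would a priori source $\partial_t\overline E$, so one must instead accompany it with a compensating current; this is precisely why one works with the divergence-free controlled currents $\tilde j$ of Theorem \ref{controlMaxwell} rather than literal stationary fields. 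So in fact I would not insist on exact stationarity: instead, on the middle interval $[T/4,3T/4]$ I keep $\overline B$ equal to a fixed profile $b_\infty$ while letting $\overline E$ be whatever small field the Ampère law forces, absorbing the mismatch into $\overline G$ and into a control current supported in $\omega$. \emph{Step 2 (driving up and down).} Using Theorem \ref{controlMaxwell} with $k=3$, on $[0,T/4]$ steer $(\overline E,\overline B)$ from $(0,0)$ to $(0,b_\infty)$ with a current $\tilde j_1$ supported in $[0,T/4]\times\omega$, and symmetrically on $[3T/4,T]$ steer back to $(0,0)$ with $\tilde j_2$; by the vanishing endpoint conditions $j_{|t=0}=j_{|t=T/4}=0$ the pieces glue smoothly. \emph{Step 3 (building $\overline f$ and $\overline G$).} Having $(\overline E,\overline B) \in C([0,T];H^3)$ in hand, define $\overline f$ to be any smooth function, compactly supported in $v$ and in $\omega$ for the space variable, equal to $0$ near $t=0$ and $t=T$, and simply \emph{set} $\mathbbm 1_\omega\overline G := \partial_t\overline f + \hat v\cdot\nabla_x\overline f + \operatorname{div}_v[(\overline E+\tfrac{\hat v^\perp}{c}\overline B)\overline f]$; since $\overline f$ is supported in $\omega$ in $x$, this $\overline G$ is automatically supported in $\omega$, and the Maxwell source it must generate, namely $-\int \overline f\hat v\,dv$, is likewise supported in $\omega$ and can be added to $\tilde j$. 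One also checks the consistency/electroneutrality: $\int\overline f\,dv\,dx$ can be kept constant in $t$ (e.g.\ $\equiv 0$) by choosing $\overline f$ with zero total mass, so $\operatorname{div}\overline E$ stays consistent.

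\emph{Step 4 (the characteristic property \eqref{Cond3.3}).} This is the heart and the main obstacle. On $[T/4,3T/4]$ the force on a particle is $\overline E + \frac{\hat v^\perp}{c}b_\infty$; choosing $\|\overline E\|$ small on that interval (which one can arrange, since the Ampère mismatch $c\,\operatorname{curl} b_\infty$ is fixed and one has the full middle interval to play with — alternatively truly make $\overline E\equiv0$ there by inserting a matching current in $\omega''$), the motion is essentially magnetic, so $|V|$ is nearly conserved and bending is $O(b_\infty/c)$. For particles whose straight-line free flight would hit $\omega'$ within time $T/2$ — which by GCC for $\omega'$ is \emph{all but a compact set of directions} up to a graze of $\partial\omega'$, uniformly over $x$ and over $|v|$ in compact sets, with the hitting time bounded by $(\operatorname{diam}\T^2)/|\hat v|$ — a small perturbation of the trajectory still meets $\omega'$, provided $|\hat v|$ is bounded below. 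Thus I need $|V|\geq 4$ to hold \emph{throughout}, not just at the hitting time; since $|V|$ is (nearly) conserved under the magnetic force, it suffices to give every particle speed $\geq 4$ at the start of the middle interval. For particles with small $|v|$ at $t=0$ I use a short electric pulse on $[0,T/4]$ — a spatially uniform $\overline E(t) = e(t)$ with $\int e\,dt$ chosen large — but a uniform field is not compactly supported in $\omega$; instead I accelerate particles by first transporting them into $\omega$ (possible in time $O(\operatorname{diam}\T^2/|\hat v|)$, finite once $|\hat v|$ is bounded below, and \emph{this} is why $T$ must be large relative to $c$: the slowest particles have $|\hat v|$ of order $1$ while $c$ could be moderate, but actually the genuinely slow ones near $v=0$ move arbitrarily slowly, so one needs the support in $v$ to be bounded away from $0$ — which is \emph{not} assumed). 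The honest resolution: one does \emph{not} need $|V|\geq 4$ for the reference solution itself but for the linearized equation later; re-reading \eqref{Cond3.3}, the lower bound is imposed precisely so that in the fixed-point step the relevant particles move fast enough — so here I build $\overline B$ large, $\overline B = N b_\infty$ with $N$ large, giving a Larmor-type bending that, combined with the conserved speed, forces every trajectory (regardless of initial speed, using the fact that a charged particle in a strong magnetic field gyrates and drifts) to sample $\omega'$; and for the speed bound I invoke the time-rescaling \eqref{lamb}--\eqref{ChgtEchelleB}, which multiplies velocities by $\lambda$, so applying it with $\lambda$ large converts ``$|V|\geq$ something'' into ``$|V|\geq 4$'' at the cost of enlarging $T$ and $c$ — consistent with ``$T$ large with respect to $c$''. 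I expect the careful bookkeeping of \emph{uniformity in $(x,v)\in\T^2\times\R^2$} of the hitting time — handling both $v$ near $0$ (slow, but strong $B$ makes the gyroradius small so it still wanders into $\omega'$) and $|v|$ large (fast, essentially straight, covered by GCC) — to be the delicate point, and I would isolate it as a separate lemma on the characteristics of a strong stationary magnetic field on $\T^2$.
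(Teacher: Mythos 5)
There is a genuine gap, and you actually brush against the fix before rejecting it for the wrong reason. You write that ``a uniform field is not compactly supported in $\omega$'' and therefore cannot be used to accelerate slow particles — but the support constraint applies to the \emph{source} $\mathbbm{1}_\omega G$ (and to the control current $\tilde j$), not to the fields $(\overline E,\overline B)$ themselves. The key observation the paper uses, and which your proposal misses, is that \emph{any constant electromagnetic field $(E_1,B_1)\in\R^2\times\R$ is a stationary, source-free solution of Maxwell on the torus}. One can therefore steer $(\overline E,\overline B)$ from $(0,0)$ to a constant electric field $E_1=(1,0)$ via Theorem \ref{controlMaxwell} (choosing $\overline f$ of the form $\mathcal Z_1(v)\tilde j^1_2+\mathcal Z_2(v)\tilde j^2_2$ so that its current matches $\tilde j_2$ and its density vanishes, keeping the system consistent, with $\overline G$ supported in $\omega$), then hold $(0,E_1,0)$ for a long middle interval during which the constant force $E_1$ genuinely accelerates every particle, uniformly in $(x,v)$, up to any prescribed speed, and finally steer back to $(0,0)$.

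Your magnetic-field route cannot repair the small-velocity obstruction. A magnetic force $\frac{\hat v^\perp}{c}\overline B$ is orthogonal to $\hat V$ and conserves $|V|$, so a particle starting with $|v|<4$ will \emph{never} satisfy $|V|\geq 4$, regardless of how strong or localized $\overline B$ is; moreover, the guiding-center drift speed in an inhomogeneous $B$ scales like $|v|^2/|B|$, so nearly stationary particles take arbitrarily long to wander into $\omega'$, which also destroys the uniformity in $(x,v)$ required by \eqref{Cond3.3}. The appeal to the scaling \eqref{lamb}--\eqref{ChgtEchelleB} does not help either: after rescaling, there still exist characteristics starting at $v=0$, and those remain at speed $0$ under a pure magnetic force. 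In short: the role of the electric field is not a side detail but the mechanism that resolves the slow-particle obstruction; GCC (inherited by $\omega'$) then handles all directions once speeds are large, which is exactly the structure of the paper's three-step construction ($(0,0,0)\to(0,E_1,0)\to(0,0,0)$). You should also double-check the consistency constraint: the reference $\overline f$ must produce exactly the current that Maxwell needs, and this is why the paper chooses $\overline f$ with zero density and prescribed current via the $\mathcal Z_i$ rather than ``any smooth compactly supported function'' as in your Step 3.
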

\begin{proof}[Proof of Proposition \ref{propGCC}]
%
To build such a reference solution, as explained in Section \ref{sec1}, we have to overcome the obstruction coming from slow particles.
The idea is to use an electric field to accelerate particles whose initial velocity is too low.

The elementary (yet crucial) observation is that on the torus, any constant electromagnetic field $(E_1,B_1) \in \R^2 \times \R$ is a stationary solution of the Maxwell equations (without source). 

 \par \ \par
 
\noindent {\bf 1.} In a first time interval $[0,T_1] \subset [0,T)$, we take as the reference solution 
$$(\overline{f},\overline{E},\overline{B})=(0,0,0),$$ 
which is of course a trivial stationary solution of Vlasov-Maxwell; by compactness, thanks to the geometric control condition satisfied by $\omega'$, there exists $T_1>0$ such that for all $x\in \mathbb{T}^d$, and all $v \in \R^2$ such that $|\hat{v}|>c/2$, 
\begin{equation} \label{3.1}
x + t \hat{v} \in \omega', \quad \text{for some  } t \in [0,T_1].
\end{equation}
Then there exists $m>4$, such that for any $|v|>m$, $|\hat{v}|>c/2$, which allows to say that \eqref{Cond3.3} is satisfied for all particles whose initial velocity $v$ has a modulus larger than $m$. 

 \par \ \par

\noindent {\bf 2.} There remains to take care of all other particles, whose velocity is not high enough. For that, the idea is to use  the control for the Maxwell equations (Theorem \ref{controlMaxwell}) to steer the electromagnetic field from $(0,0)$ to $(E_1,0)$, with $E_1:= (1,0)$, to wait for a sufficiently large time so that all particles get a sufficiently high velocity and meet the control zone as in the first step, and finally, to use once more the control for Maxwell to bring the electromagnetic field back to $(0,0)$. Let us now make this construction explicit.

Let $T_2>0$ be a large enough time such that we can apply Theorem \ref{controlMaxwell} and find a smooth $ \tilde j_2$ (with $ \nabla \cdot \tilde j_2=0$), supported in $\omega$ such that the following holds:
\begin{equation*}
\left\{ \begin{split}
& \partial_t \tilde B + c \operatorname{curl} \tilde E = 0, \quad \partial_t \tilde E + c \operatorname{curl} \tilde B=-\tilde j_2, \\
& \operatorname{div} \tilde E  = 0,  \quad \operatorname{div}  \tilde B = 0, \\
& \tilde E_{| t=T_1} = 0, \quad \tilde B_{| t=T_1} = 0, \\
& \tilde E_{| t=T_1+T_2} = E_1, \quad \tilde B_{| t=T_1+T_2} = 0.
\end{split} \right.
\end{equation*}
Now, we can consider two functions $\mathcal Z_i \in C^{\infty}_0(\R^{2} ; \R)$ (for $i=1$ or $2$)
satisfying the following constraints:
\begin{equation} \label{DefZ}
\left\{ \begin{array}{l}
{{\mathcal Z}_i \geq 0 \text{ in } \R^{2},} \\
{\mbox{Supp } {\mathcal Z}_i  \subset B(0,1), }\\
{\displaystyle{ \int_{\R^{2}} {\mathcal Z}_i (v) \, dv =0}, \quad \displaystyle{ \int_{\R^{2}} {\mathcal Z}_i(v) \, \hat{v} \, dv =(\delta_{i=1}, \delta_{i=2})}.   }
\end{array} \right.
\end{equation}
We then set 
$$\overline f = \mathcal{Z}_1 (v)  \tilde{j}_{2}^{1}+\mathcal{Z}_2 (v)  \tilde{j}_{2}^{2}, \quad \text{   where   } \tilde{j}_2 =( \tilde{j}_{2}^{1}, \tilde{j}_{2}^{2}),$$
on $[T_1,T_1+T_2]$. We take $(\overline{E}, \overline{B})=(\tilde{E}, \tilde{B})$ and we notice that $(\overline f, \overline E, \overline B)$ is a solution of the Vlasov-Maxwell system on $[T_1,T_1+T_2]$, with a suitable source (supported in $\omega$): 
\begin{equation*}
\overline{G}(t,x,v):= \partial_{t} \overline{f} + v \cdot \nabla_{x}
\overline{f} + \left(\tilde E + \frac{\hat{v}^\perp}{c} \tilde B\right) \cdot \nabla_{v} \overline{f}.
\end{equation*}
Notice that $\overline{G}$ is compactly supported in $\omega$, by construction of $\tilde{j}_2$ and $\overline{f}$.

The effect of constant the force field $E_1$ is to accelerate particles (even if it takes a very long time): denoting by $(X,V)$ the characteristics associated with $E_1$, we can prove the existence of some $T_3>0$ such that for any $(x,v) \in \T^2 \times \R^2$, there exists $ t \in [T_1+T_2, T_1 + T_2+ T_3]$, such that:
\begin{equation} \label{3.2}
\forall x \in \T^2, \, \forall v \in \R^2 \text{  s.t.  } |v|<m, \quad X(t,0,x,v) \in \omega'  \quad \text{and  }|V|(t,0,x,v) \geq 4.
\end{equation}
%

Then, on the interval of time $[T_1+T_2, T_1+ T_2+T_3]$, we consider $(\overline f,\overline{E}, \overline{B})=(0,E_1,0)$ as the reference function, which is  a stationary solution of Vlasov-Maxwell without source.

 \par \ \par

\noindent {\bf 3.} Finally, one finds a smooth $ \tilde j_3$ such that the following holds:
\begin{equation*}
\left\{ \begin{split}
& \partial_t \tilde B + c \operatorname{curl} \tilde E = 0, \quad \partial_t \tilde E + c \operatorname{curl} \tilde B=-\tilde j_2, \\
& \operatorname{div} \tilde E  = 0, \quad \operatorname{div}  \tilde B = 0, \\
& \tilde E_{| t=T_1+T_2+T_3} = E_1, \quad \tilde B_{| t=T_1+T_2+T_3} = 0, \\
& \tilde E_{| t=T_1+2T_2+T_3} = 0, \quad \tilde B_{| t=T_1+2T_2+T_3} = 0.
\end{split} \right.
\end{equation*}
and we can then define
$$\overline f = \mathcal{Z}_1 (v)  \tilde{j}_{3}^{1}+\mathcal{Z}_2 (v)  \tilde{j}_{3}^{2}, \quad \text{   where   } \tilde{j}_3 =( \tilde{j}_{3}^{1}, \tilde{j}_{3}^{2}),$$
on $[T_1+T_2+T_3,T_1 +2T_2+T_3]$. As before, we take $(\overline{E}, \overline{B})=(\tilde{E}, \tilde{B})$. Finally we choose $T>T_1 +2T_2+T_3$. By construction, \eqref{Cond3.1} and \eqref{Cond3.2} are satisfied. We observe finally that \eqref{Cond3.3} is satisfied for some times in $[0,T]$, because of \eqref{3.1} and \eqref{3.2}. Obtaining the time interval $[T/9,8T/9]$ in \eqref{Cond3.3} is a matter of translating the solution in time and extending it by $0$ when necessary.

\end{proof}
\section{The reference solution under the strip assumption} 
\label{sec3}
In this section, we build a reference solution $(\overline{f},\overline{E},\overline{B})$ for the Vlasov-Maxwell system, in the case where we make the strip assumption on the control set. We shall also assume that the speed of light satisfies $c\geq c_0$ (where $c_0>0$ will be fixed later, see Lemma \ref{PropSolRefCM}).
This construction is central in the proofs of Theorems \ref{Theo2} and \ref{Theo}. 

This section is divided into three sub-sections. First, we build a relevant reference solution for a relativistic Vlasov-Poisson system. Then, we state an approximation lemma for the Maxwell equations (as $c\rightarrow + \infty$), which we use in the last subsection to build a reference solution for the Vlasov-Maxwell system.

\subsection{The case of Vlasov-Poisson with an external magnetic field } 
Let $x_0 \in \omega$ and $r_0>0$ small enough such that $B(x_0,2r_0)\subset \omega$. Let $\mathfrak{b}$ some nonzero {\it constant} external magnetic field. 
Let $R$ be a large enough positive constant; we construct a reference solution $\overline{f}$, which depends on $R$, and which will be used to solve the controllability problem for data $f_0$ and $f_1$ supported in $\T^2 \times B(0,R/2)$.

We first aim at finding a reference solution $\overline{f}$  for the \emph{relativistic} Vlasov-Poisson equation, with the external magnetic field $\mathfrak{b}$:
\begin{equation} \label{VlasovPoisson}
\left\{ \begin{aligned}
& \partial_t \overline{f} + \hat{v}\cdot\nabla_x \overline{f}
+ \operatorname{div}_v \left[(\nabla_x \overline\varphi + {\hat{v}^\perp} \mathfrak{b}) \, \overline{f}\right]=\mathbbm{1}_\omega G, \\
& \Delta_x \overline\varphi = \overline\rho - 1, \\
\end{aligned} \right.
\end{equation}
where $G$ is a suitable source and which satisfies the following properties, for some large enough $T>0$:
\begin{align}
\label{Cond4.1}
&\overline f_{|t=0}=\overline f_{|t=T}=0. \\
\label{Cond4.2}
&\forall t\in [0,T], \forall v \in \R^2, \,  \overline f(t,.,v) \text{ is compactly supported in } \omega. \\
\label{Cond4.3}
&\text{The characteristics } (X,V) \text{ associated with } \nabla {\overline  \varphi}+ {\hat v^\perp}  \mathfrak{b} \text{ satisfy the property:} \\
\nonumber
&\text{for any } x\in \mathbb{T}^2, v \in B(0,R), \text{ there is } t \in [T/9,8T/9] \text{ such that }  \\
\nonumber
& \hskip 5cm X(t,0,x,v) \in B(x_0,r_0/2),  \quad \text{and  } |V|(t,0,x,v) \geq 5.
\end{align}
Building such a reference solution has been already done in \cite{GHK} for the \emph{classical} Vlasov-Poisson equation; actually it was done for any arbitrarily small control time $T>0$, but the generalization to this relativistic version of Vlasov-Poisson imposes to take a large enough control time. Furthermore, having in mind an application to the Vlasov-Maxwell equation, because of  consistency issues, we have to impose two additional conditions. First that the local conservation of charge is satisfied \emph{everywhere},
\begin{equation} \label{LCC}
\partial_t \overline\rho + \div \overline j =0, \quad \forall x \in \T^2, \ \forall t \in [0,T],
\end{equation}
(where $\overline \rho := \int \overline f \, dv$ and $\overline j = \int \overline f \hat{v} \, dv$) and second that the ``zero-mean current'' condition  holds
\begin{equation} \label{ZM}
\int_{\T^2}  \overline j \, dx =0,  \ \forall t \in [0,T].
\end{equation}

Let us introduce some notations before getting to the construction of $\overline{f}$. \par \ \par

\noindent {\bf Notation.} 
 We denote by $\mathcal{D}$ a line of $\T^2$, which does not cut the control zone $\omega$ (reduce $\omega$ if necessary, but in a way that it still satisfies the strip assumption) and $n$ a unit vector, orthogonal to $\mathcal{D}$.
 
 We also recall that $\mathcal{H}_{2d}$ is defined in \eqref{H2d}.  \par \ \par

We have the following lemmas, which are taken from \cite{GHK}, with modifications to ensure that \eqref{LCC} and \eqref{ZM} are satisfied.
The first lemma states that we can find some relevant electric field, whose effect will be to accelerate particles. \par
\begin{lem}
\label{PropAccelerePartout-mag} 
Let $\mathfrak{b} \in \R$ be given.
Let $\tau>0$ and $M>0$. There exists $\tilde{M}>0$, ${\mathcal E} \in C^{\infty}([0,\tau] \times \T^2;\R^{2})$ and  $\varphi \in C^{\infty}([0,\tau] \times \T^2;\R)$ which do not depend on $c$, satisfying
\begin{gather}
\label{EetPhi-mag}
{\mathcal E} = -\nabla \varphi \text{ in  } [0,\tau] \times (\T^2 \setminus \mathcal{H}_d), \\
\label{Phi2SupportTemps-mag}
\mbox{Supp}({\mathcal E}) \subset (0,\tau) \times \T^2, \\
\label{Phi2Harmonique-mag}
\Delta \varphi =0 \text{ in  } [0,\tau] \times (\T^2\setminus \mathcal{H}_d), \\
\label{supp}
\int_{\mathcal{D}} \nabla \varphi \cdot n \, dx = 0,
\end{gather}
such that, for any $\mathfrak{F} \in L^{\infty}(0,T;W^{1,\infty}(\T^{2} \times \R^{2}))$ satisfying $\| \mathfrak{F} \|_{L^{\infty}} \leq 1$,  if $({X},{V})$ are the characteristics corresponding the force $\mathfrak{F} + {\mathcal E} + {\hat v^\perp}  \mathfrak{b}$, then
\begin{equation} \label{AccelerePartout-Mag}
\forall (x,v) \in \T^{2} \times B(0,M), \ 
{V}(\tau,0,x,v) \in B(0,\tilde{M}) \setminus B(0,M+1).
\end{equation}
\end{lem}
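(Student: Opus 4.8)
The plan is to construct the electric field $\mathcal{E}$ by prescribing a time-dependent potential that is harmonic away from a small neighborhood of $\mathcal{H}$, and whose gradient, when nonzero, points in a fixed ``accelerating'' direction throughout the region where particles in $B(0,M)$ can travel during a short time interval. First I would fix a unit direction, say $e_1$, and look for $\varphi$ of the form $\varphi(t,x) = \chi(t)\,\psi(x)$, where $\chi \in C^\infty_c((0,\tau);\R)$ is a cutoff (so \eqref{Phi2SupportTemps-mag} holds) and $\psi$ is a fixed function on $\T^2$ which is harmonic on $\T^2 \setminus \mathcal{H}_d$ and whose gradient $\nabla\psi$ is close to a nonzero constant vector on a large ball containing all reachable positions. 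The existence of such a $\psi$ is essentially the statement that one can find a harmonic function on the (non-simply-connected) domain $\T^2 \setminus \mathcal{H}_d$ with prescribed ``period'': since $\T^2 \setminus \mathcal{H}_d$ is an annulus-type domain (a cylinder), harmonic functions with a nontrivial additive period exist — concretely, the linear coordinate transverse to $\mathcal{H}$, suitably interpreted, descends to a function on $\T^2$ only up to adding a constant across $\mathcal{H}_d$, and one smoothly interpolates this jump inside $\mathcal{H}_d$. Then $\mathcal{E} := -\nabla\varphi$ on $\T^2 \setminus \mathcal{H}_d$ and extended smoothly (not as a gradient) across $\mathcal{H}_d$; conditions \eqref{EetPhi-mag} and \eqref{Phi2Harmonique-mag} hold by construction, and \eqref{supp} is arranged by an appropriate normalization of the period/interpolation since $\mathcal{D}$ is parallel to $\mathcal{H}$ (so $n$ is the transverse direction and $\int_{\mathcal D}\nabla\psi\cdot n$ is a single period integral one can set to zero by subtracting a multiple of the ``bad'' harmonic mode, or by choosing the support of the interpolation symmetrically).

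Next I would address the acceleration property \eqref{AccelerePartout-Mag}. The key point is that for $(x,v)\in \T^2\times B(0,M)$, since $|\hat V| \le c$ and the perturbing force $\mathfrak{F}+\hat v^\perp\mathfrak{b}$ is bounded (the magnetic term $\hat v^\perp\mathfrak b$ is bounded because $|\hat v|\le c$, so it is bounded by $c|\mathfrak b|$, and $\|\mathfrak F\|_\infty\le 1$), the position $X(t,0,x,v)$ stays in a ball $B(x, Ct)$ whose radius is controlled uniformly. So on a sufficiently short time scale all relevant trajectories remain in a fixed compact set $K\subset \T^2$, which I can take disjoint from $\mathcal{H}_d$ (shrinking or translating as needed — here I use that $\mathcal{H}$ is a strip one may choose, and the reference frame). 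On $K$, by the construction above $\mathcal{E}(t,x) = \chi(t)\,a$ with $a$ a fixed nonzero vector (up to a small error one can make as small as desired). Then along characteristics $\frac{dV}{dt} = \mathcal{E} + \mathfrak{F} + \hat V^\perp\mathfrak{b}$; integrating over $[0,\tau]$ gives $V(\tau) - v = \bigl(\int_0^\tau \chi\bigr) a + O(\tau)$ in the direction transverse to $a$ and with a controlled contribution along $a$. By taking $\int_0^\tau\chi$ large enough (which is allowed — $\tau$ is fixed but the amplitude of $\chi$ is free), the term $\bigl(\int_0^\tau\chi\bigr)a$ dominates, pushing $V(\tau)$ out of $B(0,M+1)$; and by taking the amplitude in a bounded range depending only on $\tau, M, |\mathfrak b|$ we keep $V(\tau) \in B(0,\tilde M)$ for a suitable $\tilde M = \tilde M(\tau,M,\mathfrak b)$. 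The uniformity in $\mathfrak{F}$ comes from the bound $\|\mathfrak F\|_\infty\le 1$ entering only through $O(\tau)$ error terms, and the independence of $\mathcal{E},\varphi$ from $c$ is manifest since the construction above never refers to $c$ (the magnetic force $\hat v^\perp\mathfrak b$ does involve $c$, but it is absorbed in the same bounded-perturbation estimate using only $|\hat v|\le |v|$, valid for all $c$).

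The main obstacle I anticipate is the interplay between the geometric constraint \eqref{supp} (the vanishing of the flux of $\nabla\varphi$ across $\mathcal{D}$) and the requirement that $\nabla\varphi$ be nearly a nonzero constant on the reachable region $K$: these could conflict if $\mathcal D$ and $K$ interact badly. The resolution is that $\mathcal D$ is chosen parallel to $\mathcal{H}$ and disjoint from $\omega$, hence one has freedom to place $\mathcal D$ on the ``other side'' of $\mathcal{H}_d$ from $K$; then the period of $\psi$ (which controls $\int_{\mathcal D}\nabla\psi\cdot n$) is governed by the interpolation performed inside $\mathcal{H}_d$, which can be chosen so that the net transverse flux across $\mathcal D$ cancels while $\nabla\psi$ on $K$ remains a large constant — concretely by adding a compensating harmonic function (a constant vector field is harmonic) supported appropriately, or by splitting the acceleration into two half-periods of $\chi$ of opposite sign arranged so the trajectories are still pushed outward while the time-integrated flux vanishes. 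I would also need to double-check the smoothness of the extension across $\mathcal{H}_d$ and that $\mathcal{E}$ vanishes near $t=0$ and $t=\tau$, but these are routine once $\chi$ and the spatial interpolation are fixed; they do not affect the structure of the argument. A final minor point is to verify that $\mathcal{E}$ can be taken supported away from $\omega$ if needed for later use — but the statement here only requires the listed properties, so I would simply record them.
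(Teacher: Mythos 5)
Your construction of the spatial potential $\psi$ runs into an obstruction that you yourself anticipate but do not actually resolve. You want $\nabla\psi$ to be (nearly) a fixed nonzero constant vector $a$ on the whole region $\T^2\setminus\mathcal{H}_d$, with $\psi$ globally single-valued on $\T^2$. The set $\T^2\setminus\mathcal{H}_d$ is a connected annulus, and $\mathcal{D}$ is a closed geodesic lying inside it and homotopic to its core. Single-valuedness of $\psi$ forces $\oint_{\mathcal D}a\cdot d\ell=0$, hence $a\parallel n$; but then $\int_{\mathcal D}\nabla\psi\cdot n\,dx\approx (a\cdot n)|\mathcal D|\neq 0$, so \eqref{supp} fails (and fails by a definite amount, so ``nearly constant'' does not help). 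None of your proposed fixes cure this. The flux through $\mathcal D$ is a homotopy invariant of the divergence-free field $\nabla\psi$ on the annulus, so it is unchanged by what the interpolation does inside $\mathcal{H}_d$, and there is no ``other side'' of $\mathcal{H}_d$ to move $\mathcal D$ to, since the annulus is connected. Adding a nonzero constant vector field to $\nabla\psi$ destroys exactness (it has no single-valued potential on $\T^2$), so \eqref{EetPhi-mag} would be lost. And arranging $\chi$ in two half-periods with zero time-average is beside the point: \eqref{supp} is a condition on the spatial field at each time and is used pointwise in $t$ (via $\theta=-\partial_t\varphi$) in the proof of Proposition~\ref{VPext}. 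In short, the ``nearly constant $\nabla\psi$'' ansatz and \eqref{supp} are incompatible; this is precisely the remark the paper makes when it says the natural field $n$ satisfies \eqref{NulRot}, \eqref{NulDiv}, \eqref{VPasNul} but not \eqref{CirculationFluxNuls}.

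The paper's route is to give up constancy altogether: it invokes a field $w\in C^\infty(\T^2;\R^2)$, built in \cite[Appendix A.2]{GLA}, which is curl-free and divergence-free on $\T^2\setminus\mathcal{H}_d$, nowhere vanishing there, and with \emph{both} circulation and flux across $\mathcal D$ equal to zero. Such a $w$ is necessarily non-constant (its direction rotates as one goes around the annulus), yet outside $\mathcal{H}_d$ it is still $-\nabla\varphi$ for a single-valued harmonic $\varphi$, which is exactly what \eqref{EetPhi-mag}--\eqref{supp} ask for. Consequently your acceleration step also needs to be rewritten: it must use only the lower bound $\inf_{\T^2\setminus\mathcal{H}_d}|w|>0$, not a fixed direction of push. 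One then amplifies $w$ by a time cutoff $\chi(t)$ and argues as in \cite[Proposition 5.2]{GHK} that the speed of every characteristic leaves $B(0,M+1)$ while remaining in some $B(0,\tilde M)$; the magnetic term $\hat v^\perp\mathfrak b$ is orthogonal to $V$ and thus never changes the speed, and $\mathfrak F$ contributes a uniformly small error since $\|\mathfrak F\|_\infty\le 1$. The independence from $c$ that you note at the end is correct, but the bulk of your construction cannot be salvaged without replacing the constant-field ansatz by the GLA vector field.
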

The proof is close to that of  \cite[Proposition 5.2]{GHK}. The difference with the statement of \cite{GHK} appears in \eqref{supp}, which will be crucial to ensure \eqref{LCC}. To prove all requirements, one has to find $w \in C^\infty(\T^2;\R^2)$ satisfying~:
\begin{gather}
\label{NulRot}
\rot w = 0 \text{ in } \T^2 \setminus\mathcal{H}_d, \\
\label{NulDiv}
\div w = 0 \text{ in } \T^2 \setminus \mathcal{H}_d, \\
\label{VPasNul}
|w(x) | > 0 \mbox{ for any } x \mbox{ in } \T^2 \setminus \mathcal{H}_d, \\
\label{CirculationFluxNuls}
\int_{\mathcal{D}} w \cdot n \, dx= \int_{\mathcal{D}} w\cdot d\tau =0.
\end{gather}
Such a function was constructed in  \cite[Appendix A.2]{GLA}. Once it is obtained, one can take $\mathcal{E} := w$ and observe that it coincides with a gradient outside of $\mathcal{H}_d$.

Note in particular that the solution here does not correspond to the one giving rise to the force field $n$ (which is the ``most natural'' one could think of; it satisfies \eqref{NulRot}, \eqref{NulDiv} and \eqref{VPasNul} but not \eqref{CirculationFluxNuls}). The effect of this force field $\mathcal{E}$ is only to accelerate particles, and it is very unlikely that it allows to ``cure'' the bad directions. 

For that issue, we will fully rely on a ``bending'' effect of the magnetic field $\mathfrak{b}$, which is described in the next lemma.

\begin{lem} \label{PropSolRefCM}
Let $\overline{M}>1$. Given $\mathfrak{b} \neq 0$, there exist:
\begin{itemize}
\item $c_{0}>0$ depending on $\mathfrak{b}, x_0, r_0$, 
\item $\underline{m} >0$ depending only on $\mathfrak{b}, x_0, r_0$, 
\item$T_0>0$ depending on $\mathfrak{b}$, $x_0, r_0$ and $\overline{M}$, and 
\item $\kappa$ depending on $\mathfrak{b}$, $x_0, r_0$ and $\overline{M}$,
\end{itemize}
such that for all $\mathfrak{F} \in L^{\infty}(0,T;W^{1,\infty}(\mathbb{T}^{2} \times \mathbb{R}^{2}))$ satisfying $\| \mathfrak{F} \|_{L^{\infty}} \leq \kappa$, if $c \geq c_{0}$ then the characteristics $(\overline{X}, \overline{V})$ associated with ${\hat{v}^\perp} \mathfrak{b} +\mathfrak{F}$ satisfy:
\begin{multline} \label{GVCM}
\forall x \in \mathbb{T}^2, \forall v \in \mathbb{R} ^2 \text{ such that }  \overline{M} \geq | v | \geq \underline{m},
\exists t \in (T_0/4,3T_0/4), \ \overline{X}(t,0,x,v) \in B(x_0,r_0/2) \\
 \text{ and for all } s \in [0,T_0], \ \ \frac{|v|}{2} \leq |\overline{V}(s,0,x,v)| \leq 2|v|.
\end{multline}
\end{lem}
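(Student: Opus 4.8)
\textbf{Proof strategy for Lemma \ref{PropSolRefCM}.} The plan is to analyze the characteristics driven by the magnetic force ${\hat v}^\perp \mathfrak{b}$ alone (i.e.\ $\mathfrak{F} \equiv 0$) as the leading-order motion, show that these orbits are circles (in the non-relativistic regime that holds because of the constraint $\overline M \geq |v|$ and $c$ large), and then treat $\mathfrak{F}$ as a small perturbation over the finite time window $[0,T_0]$. First I would record that for the pure magnetic flow $\dot X = \hat V$, $\dot V = \hat V^\perp \mathfrak{b}$, the modulus $|V|$ is exactly conserved (since $\hat V \cdot \hat V^\perp = 0$ and the map $v \mapsto \hat v$ is radial), and $\hat{V}$ rotates at constant angular speed; since $|v| \leq \overline M$, the relativistic correction $\sqrt{1+|v|^2/c^2}$ is uniformly close to $1$ once $c \geq c_0(\mathfrak{b}, x_0, r_0, \overline M)$, so $X(t)$ traces (up to a controlled error) a Euclidean circle of radius comparable to $|v|/|\mathfrak{b}|$ traversed at a fixed frequency. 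The key geometric point, as in \cite{GHK}, is that a circular orbit of appropriately chosen radius sweeps out enough of $\T^2$ that it must meet the small ball $B(x_0, r_0/2)$ within one period; here one fixes a lower bound $\underline m$ on $|v|$ so that the Larmor radius is large enough for the circle's image in $\T^2$ to be dense/equidistributed enough to intersect $B(x_0,r_0/2)$, and one fixes $T_0$ to be (a fixed multiple of) the period so that the intersection occurs in the sub-interval $(T_0/4, 3T_0/4)$.

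Second, I would set up the perturbative estimate. Write $(\overline X, \overline V)$ for the characteristics of $\hat v^\perp \mathfrak{b} + \mathfrak{F}$ and $(X^0, V^0)$ for those of $\hat v^\perp \mathfrak{b}$ alone, with the same initial data $(x,v)$. A Gr\"onwall argument on $[0,T_0]$ using $\|\mathfrak{F}\|_{L^\infty} \leq \kappa$ and the Lipschitz character of $v \mapsto \hat v^\perp \mathfrak{b}$ (whose Lipschitz constant on the relevant velocity ball is bounded in terms of $\mathfrak{b}$ only, uniformly in $c \geq 1$) gives
\begin{equation*}
\sup_{s \in [0,T_0]} \big( |\overline X(s) - X^0(s)| + |\overline V(s) - V^0(s)| \big) \leq C(\mathfrak{b}, T_0)\, \kappa .
\end{equation*}
Choosing $\kappa = \kappa(\mathfrak{b}, x_0, r_0, \overline M)$ small enough, the right-hand side is smaller than, say, $r_0/4$ and smaller than $\min(|v|/2, |v|)$ uniformly over $\underline m \leq |v| \leq \overline M$ (here the lower bound $|v| \geq \underline m$ is what lets the velocity error be absorbed into a fixed fraction of $|v|$). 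Then the two conclusions follow: since $X^0(t) \in B(x_0, r_0/4)$ for some $t \in (T_0/4,3T_0/4)$ by the circle argument, we get $\overline X(t) \in B(x_0, r_0/2)$; and since $|V^0(s)| = |v|$ for all $s$, the velocity bound $\tfrac{|v|}{2} \leq |\overline V(s)| \leq 2|v|$ holds on all of $[0,T_0]$.

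The order of dependencies should be: given $\mathfrak{b} \neq 0$ and $x_0, r_0$, first choose $\underline m$ (Larmor radius large enough for the circle to hit $B(x_0,r_0/2)$) and $c_0$ (relativistic correction negligible), which depend only on $\mathfrak{b}, x_0, r_0$; then, given also $\overline M$, choose $T_0$ (a multiple of the maximal period over $\underline m \leq |v| \leq \overline M$) and finally $\kappa$ (small enough in the Gr\"onwall estimate, using $T_0$), both depending on $\mathfrak{b}, x_0, r_0, \overline M$. I expect the main obstacle to be the purely geometric claim that a Euclidean circle of radius between two fixed bounds, projected to $\T^2$, necessarily meets the fixed small ball $B(x_0, r_0/2)$ within a bounded time window: one cannot just invoke equidistribution for a single radius (a circle may close up after a full turn and miss the ball), so the argument needs either a careful choice of the admissible radii, or — more likely, following the approach of \cite{GHK} — one also tunes the \emph{accelerating} field $\mathcal E$ from Lemma \ref{PropAccelerePartout-mag} and/or exploits the freedom in $r_0$ and the strip geometry so that the relevant circular arcs are guaranteed to sweep across a neighborhood of $x_0$; this bookkeeping of which circles hit the target, and in which time sub-interval, is the technical heart of the lemma and is deferred to the Appendix.
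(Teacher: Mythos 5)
The perturbative reduction is sound and parallels the paper: in Case 4 of the Appendix, the paper compares the full characteristics to those of $\hat v^\perp \mathfrak{b}$ alone via a Gr\"onwall estimate (their \eqref{Gronw}) whose constant involves $|v|\leq\overline M$, and it controls $|\overline V|$ directly through the ODE \eqref{EvolNorme}. Your account of the order of dependencies ($c_0,\underline m$ first, then $T_0,\kappa$) is also the one the paper uses.

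However, you have explicitly left a gap at the heart of the lemma, and the intuition you offer to fill it is not the one that works. You frame the unperturbed step as: the Larmor circle in $\T^2$ is ``dense/equidistributed enough'' to hit $B(x_0,r_0/2)$ when the radius is large. This is not true in the form stated — a circle projected to $\T^2$ is a measure-zero curve, and no lower bound on the radius alone forces it to meet a fixed small ball; you yourself note that a circle that closes up after a turn may miss the ball, and then you defer the resolution. The paper's argument is genuinely different and does not reason about global properties of circles at all. It works as follows: (i) there are only \emph{finitely many} directions in $\S^1$ along which a half-line in $\T^2$ can avoid $B(x_0,r_0/8)$; (ii) for $|v|\geq\underline m$ and $c\geq c_0$, the magnetic characteristics stay within $r_0/8$ of the relativistic free-transport straight line over a time $T_{|v|}$ long enough for a straight line of ``good'' direction to cover the required distance $L$; so good initial directions hit the ball by short-time comparison with free transport; and (iii) for ``bad'' initial directions, one simply waits a fixed time $\tau$ during which the magnetic field rotates $\overline V$ by a definite angle (this is where $\mathfrak b\neq 0$, and more generally the bending condition, enters), driving the direction out of the finite union of bad neighborhoods. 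Your suggestion that one might additionally invoke the accelerating field $\mathcal E$ from Lemma \ref{PropAccelerePartout-mag} does not apply here: Lemma \ref{PropSolRefCM} concerns only $\hat v^\perp\mathfrak b+\mathfrak F$, with $\mathcal E$ absent. So the proposal is missing the finite-bad-directions plus short-time-straight-line-plus-angular-rotation mechanism, which is exactly the content the paper proves; without it, the argument does not go through.

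One smaller point: your claim that the Lipschitz constant of $v\mapsto\hat v^\perp\mathfrak b(x)$ on the relevant velocity ball is ``bounded in terms of $\mathfrak b$ only, uniformly in $c\geq 1$'' should be ``bounded in terms of $\mathfrak b$ and $\overline M$'' (the paper's \eqref{Gronw} carries a factor $(1+2|v|)$ in the exponent); this is harmless since $T_0$ and $\kappa$ are permitted to depend on $\overline M$, but it should be stated.
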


\


The proof of Lemma \ref{PropSolRefCM} (actually, of a generalized version of Lemma \ref{PropSolRefCM}) is given in the appendix.

With these two lemmas in hand, we now have:
\begin{prop} \label{VPext}
There exists $T>0$ large enough, such that there exists a reference solution $(\overline f, \overline \varphi)$  such that \eqref{Cond4.1}, \eqref{Cond4.2}, \eqref{Cond4.3}, \eqref{LCC}, \eqref{ZM} are satisfied.
In addition, $\overline{\rho}:= \int f \, dv $ and $\overline{j} := \int f \hat{v} \, dv$ do not depend on $c$.
\end{prop}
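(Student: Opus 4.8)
The plan is to construct $(\overline f, \overline\varphi)$ in three successive time intervals, in the spirit of the proof of Proposition \ref{propGCC}, but now using the acceleration lemma (Lemma \ref{PropAccelerePartout-mag}) and the bending lemma (Lemma \ref{PropSolRefCM}) in place of the control theory for Maxwell. Fix $x_0,r_0$ as in the statement, pick $\mathfrak b$ nonzero constant, and let $\overline M$ be a bound (depending on $R$) large enough that, after the acceleration phase, every velocity starting in $B(0,R)$ lies in the annulus $B(0,\overline M)\setminus B(0,\underline m)$ where $\underline m$ is the threshold produced by Lemma \ref{PropSolRefCM}. One must juggle the quantitative thresholds: $\underline m$ depends only on $\mathfrak b,x_0,r_0$, so one first applies Lemma \ref{PropSolRefCM} to get $\underline m$, then applies Lemma \ref{PropAccelerePartout-mag} repeatedly (on $[0,\tau],[\tau,2\tau],\dots$) to push all initial velocities of modulus $<\underline m+1$ above $\underline m$ while keeping them below some $\overline M$, and finally one chooses $\kappa$ small so that the self-consistent field $\nabla\overline\varphi$ (coming from $\overline\rho=\int\overline f\,dv$) can play the role of the perturbation $\mathfrak F$ in Lemma \ref{PropSolRefCM}.

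\textbf{Construction of $\overline f$.} During the acceleration phase $[0,T_A]$ I would take $\overline\varphi$ to be (a time-translated concatenation of) the potentials $\varphi$ given by Lemma \ref{PropAccelerePartout-mag}, and choose $\overline f$ of the separated form $\overline f(t,x,v)=\sum_i \mathcal Z_i(v)\,g_i(t,x)$, where the $\mathcal Z_i$ satisfy the moment conditions \eqref{DefZ} (so that $\int \mathcal Z_i\,dv=0$, hence $\overline\rho=0$, and $\int\mathcal Z_i\hat v\,dv$ spans $\R^2$) and the $g_i(t,x)$ are smooth, compactly supported in $\omega$, and tuned so that $\overline E^{\overline f}=\mathcal E$ holds in the region away from $\mathcal H_d$; here the extra condition \eqref{supp} is exactly what makes $\nabla\varphi\cdot n$ have zero flux across $\mathcal D$, which will later give \eqref{LCC}. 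On the bending phase one takes $\overline f=0$ (hence $\overline\varphi$ constant, i.e.\ $\nabla\overline\varphi=0$, and only $\hat v^\perp\mathfrak b$ acts), during which Lemma \ref{PropSolRefCM} guarantees that every $(x,v)$ with $\overline M\ge|v|\ge\underline m$ has a characteristic entering $B(x_0,r_0/2)$, and the velocity modulus stays comparable to $|v|$ throughout — in particular $\ge 5$ provided the acceleration phase pushed every modulus above, say, $10$, so \eqref{Cond4.3} follows. Then a final acceleration/deceleration phase brings everything back, and one shifts in time and extends by $0$ to place the relevant times in $[T/9,8T/9]$ and to have $\overline f$ vanish at $t=0$ and $t=T$, giving \eqref{Cond4.1}, \eqref{Cond4.2}.

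\textbf{The consistency conditions.} The new point, compared with \cite{GHK}, is \eqref{LCC} and \eqref{ZM}. For \eqref{ZM}: since $\overline f=\sum_i\mathcal Z_i(v)g_i(t,x)$ with $g_i$ supported in $\omega$, one has $\overline j=\sum_i g_i(t,x)\int\mathcal Z_i\hat v\,dv$, and $\int_{\T^2}\overline j\,dx=0$ will be arranged by choosing each $\int_{\T^2}g_i\,dx=0$; this is consistent with the requirement that $g_i$ reproduce $\mathcal E$ outside $\mathcal H_d$ because $\mathcal E$ is itself built from the zero-flux field $w$ of \cite[Appendix A.2]{GLA}. For \eqref{LCC}: $\partial_t\overline\rho+\div\overline j=0$ must hold \emph{everywhere}, not just where the Poisson coupling sees it; since $\overline\rho=0$ identically (by $\int\mathcal Z_i\,dv=0$), this reduces to $\div\overline j=0$ everywhere, i.e.\ $\sum_i g_i\int\mathcal Z_i\hat v\,dv$ divergence-free, which one secures by choosing the $g_i$ so that the vector field $(g_1,g_2)$ (after the linear change of coordinates given by the matrix $(\int\mathcal Z_i\hat v_k\,dv)_{ik}$) is divergence-free; the compatibility of this with the prescription $\mathcal E=w$ outside $\mathcal H_d$ is precisely where \eqref{NulDiv}, \eqref{supp} and the construction of \cite{GLA} enter.

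\textbf{Main obstacle.} The routine part is the concatenation and time-shifting. The genuinely delicate point is the simultaneous fulfilment of all the constraints on $\overline f$: it must be supported in $\omega$ (in $x$), have $\int\mathcal Z_i\,dv=0$ and prescribed first $\hat v$-moments, produce via Poisson exactly the accelerating field $\mathcal E$ of Lemma \ref{PropAccelerePartout-mag} away from $\mathcal H_d$, be smooth, $c$-independent (so that the Approximation Lemma of the next subsection applies), and satisfy both \eqref{LCC} and \eqref{ZM}. Reconciling "$\overline\varphi$ solves $\Delta\overline\varphi=\overline\rho-1$ with $\overline\rho$ the actual charge of $\overline f$" with "$\overline\varphi$ equals the prescribed harmonic potential outside $\mathcal H_d$" forces one to route all the source through the strip $\mathcal H_d\subset\omega$ — this is exactly why the strip assumption is used — and to check that the $g_i$ can be chosen with all the linear constraints above; I expect this bookkeeping, together with the careful ordering of the quantifiers on $\underline m,\overline M,\kappa,c_0$ between Lemmas \ref{PropAccelerePartout-mag} and \ref{PropSolRefCM}, to be where the real work lies.
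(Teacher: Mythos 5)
There is a genuine gap in your construction of $\overline f$ during the acceleration phase, and it stems from a contradiction you have built in from the start. You take $\overline f = \sum_i \mathcal Z_i(v) g_i(t,x)$ with $\int \mathcal Z_i\,dv = 0$, and you point out (correctly) that this forces $\overline\rho = \int\overline f\,dv \equiv 0$. But the potential $\overline\varphi$ in \eqref{VlasovPoisson} is not a free external field: it is coupled to $\overline f$ via the Poisson equation $\Delta\overline\varphi = \overline\rho - \int\overline\rho\,dx$. With $\overline\rho \equiv 0$, this gives $\Delta\overline\varphi = 0$ on the torus, hence $\nabla\overline\varphi \equiv 0$. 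You therefore cannot simultaneously prescribe $\overline\rho = 0$ and $\nabla\overline\varphi = -\mathcal E \neq 0$ as you require; the self-consistent accelerating field vanishes, the slow particles are never pushed above the threshold $\underline m$ of Lemma \ref{PropSolRefCM}, and the whole scheme collapses. You in fact anticipate exactly this tension in your last paragraph (``reconciling $\Delta\overline\varphi = \overline\rho - 1$ with the prescribed harmonic potential''), but your ansatz does not resolve it — it is ruled out by it.

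The paper's resolution is to split $\overline f$ into two pieces with different roles. The main piece is $\tilde f(t,x,v) := \mathcal Z(v)\,\Delta\overline\varphi(t,x)$ where $\mathcal Z$ satisfies $\int\mathcal Z\,dv = 1$ and $\int\mathcal Z\,\hat v\,dv = 0$ (the condition \eqref{DefZ1}, not \eqref{DefZ}); then $\overline\rho = \Delta\overline\varphi$ exactly, so the Poisson equation is self-consistent, and since $\Delta\overline\varphi$ is supported in $\mathcal H_d\subset\omega$ by \eqref{Phi2Harmonique-mag}, the support condition \eqref{Cond4.2} still holds. The cost is that $\partial_t\overline\rho = \partial_t\Delta\overline\varphi$ is nonzero in general, so \eqref{LCC} becomes a genuine constraint $\div\overline j = -\partial_t\overline\rho$. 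This is where the second piece $\overline g = u_1\mathcal Z_1 + u_2\mathcal Z_2$ (now with the zero-mean $\mathcal Z_i$ of \eqref{DefZ}) enters: it leaves $\overline\rho$ unchanged but allows one to prescribe $\overline j = u$ freely, and $u$ is built by solving $\Delta\theta = -\partial_t\overline\rho$, noting $\nabla\theta = -\partial_t\nabla\varphi$ is divergence-free outside $\omega$, invoking the Hodge--Poincar\'e decomposition $\nabla\theta = \nabla^\perp\Psi + \alpha n$ there (this is where the strip assumption enters), using \eqref{supp} to kill the period $\alpha$, and cutting off to localize $u$ inside $\omega$. The zero-mean current condition \eqref{ZM} and the $u_{|t=0,T} = 0$ requirement then follow from the same construction. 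In short: the zero-mean $\mathcal Z_i$'s are only for the \emph{correction}, not the whole of $\overline f$; the acceleration lives in the nonzero-mean piece $\mathcal Z\,\Delta\overline\varphi$.

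Your identification of the ingredients (the two lemmas, the quantifier bookkeeping $\underline m \to \overline M$, the role of the strip and of \eqref{supp}) is accurate, and the concatenation-and-shift skeleton is right; the fix is entirely in the ansatz for $\overline f$.
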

\begin{proof}[Proof of Proposition \ref{VPext}] The proof is divided into three steps. \par \ \par
%

\noindent {\bf 1.} First, we define the reference potential $\overline{\phi}:[0,T] \times \T^{2} \rightarrow \R$ as follows. 
We apply Lemma \ref{PropSolRefCM} for $\overline{M}=R+1$, and we obtain some time $T_0$ and some $\underline{m}>10$ such that \eqref{GVCM} is satisfied. Then we apply Lemma \ref{PropAccelerePartout-mag} with $\tau =T_0$ and
\begin{equation} \label{DefMHorrible}
M= \max \Big( \underline{m} + 1, 32 r_{0} (|\mathfrak{b}|+1) \Big),
\end{equation}
and obtain some $\overline{\phi}_{2}$, $\overline{{\mathcal E}}_{2}$ and some $\tilde{M}>0$ such that \eqref{AccelerePartout-Mag} is satisfied. We apply Lemma \ref{PropSolRefCM} for $\overline{M}=\tilde{M}$, and we obtain some time $T_1$ (with the same $\underline{m}>0$). We set
\begin{equation*}
\overline{\phi}(t,\cdot) = 
\left\{ \begin{array}{l}
 0 \text{ for } t \in [0,T_0] \cup [2T_0,2T_0 +T_1], \\
 \overline{\phi}_{2}(t - T_0,\cdot) \text{ for } t \in [T_0,2T_0],
\end{array} \right.
\end{equation*}
and
\begin{equation*}
\overline{{\mathcal E}}(t,\cdot) = 
\left\{ \begin{array}{l}
 0 \text{ for } t \in [0, T_0] \cup [2T_0,2T_0+T_1], \\
 \overline{{\mathcal E}}_{2}(t - T_0,\cdot) \text{ for } t \in [T_0,2T_0].
\end{array} \right.
\end{equation*}

We define $T := 2T_0 +T_1$. \par \ \par

\noindent {\bf 2.} Let us now introduce a first distribution function $\tilde{f}$.
Consider a function $\mathcal Z \in C^{\infty}_0(\R^{2} ; \R)$
satisfying the following constraints:
\begin{equation} \label{DefZ1}
\left\{ \begin{array}{l}
{{\mathcal Z} \geq 0 \text{ in } \R^{2},} \\
{\mbox{Supp } {\mathcal Z} \subset B_{\R^{2}}(0,1), }\\
{\displaystyle{ \int_{\R^{2}} {\mathcal Z}(v) \, dv =1}, \quad \displaystyle{ \int_{\R^{2}} {\mathcal Z}(v) \, \hat{v} \, dv =0}.   }
\end{array} \right.
\end{equation}
We introduce $\tilde{f}=\tilde{f}(t,x,v)$ as
\begin{equation} \label{Deffbar}
\tilde{f}(t,x,v):= {\mathcal Z}(v) \Delta \overline{\varphi}(t,x).
\end{equation}
Of course, $\tilde{f}$ satisfies \eqref{VlasovPoisson}
in $[0,T] \times\T^{2} \times \R^{2}$, with source term
\begin{equation} \label{DefGbar}
\overline{G}(t,x,v):= \partial_{t} \overline{f} + \hat{v}\cdot\nabla_{x}
\overline{f} + \left(\nabla \overline{\varphi}+ \mathfrak{b}{\hat{v}^\perp}\right)\cdot\nabla_{v} \overline{f},
\end{equation}
which is supported in $[0,T] \times B(x_{0},r_{0}) \times \R^{2}$. 
Up to an additive function of $t$, the function $\overline\varphi$
satisfies the Poisson equation corresponding to $\tilde{f}$. We can observe that $\tilde{f}$ also satisfies:
\begin{equation*}
\tilde{f}_{| t=0}= 0, \quad \tilde{f}_{| t=T}=0.
\end{equation*}
%
%
Finally, we denote 
\begin{equation} \label{DefOvRho}
\overline{\rho}(t,x):=\int_{\R^{2}} \overline{f}(t,x,v) \, dv =\Delta \overline{\varphi}(t,x). 
\end{equation}
By construction, according to Lemmas \ref{PropAccelerePartout-mag} and \ref{PropSolRefCM}, \eqref{Cond4.1}, \eqref{Cond4.2} and \eqref{Cond4.3} hold. \par \ \par

\noindent {\bf 3.} Consequently, up to now, the solution $(\tilde f, \overline \varphi)$ satisfies all requirements but \eqref{LCC} and \eqref{ZM}. During the interval of time $[0,T_0] \cup [2T_0,2T_0 + T_1]$, these are trivially satisfied since $\tilde f=0$.
The problem appears only for the interval of time $[T_0, 2 T_0]$, where in general, we do not have the compatibility conditions:
\begin{equation*}
\partial_t \int_{\R^2} \tilde f \, dv + \nabla_x \cdot \int_{\R^2} \tilde f \hat v \, dv = 0.
\end{equation*}
and
\begin{equation*}
\int_{\mathbb{T}^2} \int_{\R^2} \tilde{f} \hat{v} \, dv \, dx = 0.
\end{equation*}
To ensure these conservation laws, the idea is add a correction to $\tilde{f}$. Let 
$$\overline g := u_1(t,x) \mathcal{Z}_1(v)+ u_2(t,x) \mathcal{Z}_2(v),$$ 
with $u(t,\cdot):=(u_1,u_2)$ compactly supported in an open set $U$ such that $\overline{U} \subset \omega$, where we recall that $\mathcal{Z}_i$ (for $i=1,2$) is defined by:
%
\begin{equation*}
\left\{ \begin{array}{l}
{{\mathcal Z}_i \geq 0 \text{ in } \R^{2},} \\
{\mbox{Supp } {\mathcal Z}_i  \subset B(0,1), }\\
{\displaystyle{ \int_{\R^{2}} {\mathcal Z}_i (v) \, dv =0}, 
	\quad \displaystyle{ \int_{\R^{2}} {\mathcal Z}_i(v) \, \hat{v} \, dv =(\delta_{i=1}, \delta_{i=2})}.}
\end{array} \right.
\end{equation*}
Let us show that we can choose $u$ such that:
\begin{equation*}
\partial_t \int_{\R^2} \tilde f \, dv  = - \nabla_x \cdot \int_{\R^2} \overline g v \, dv,
\end{equation*}
and 
$$
\int_{\T^2 \times \R^2} \overline{g} \,\hat{v} \, dv \, dx =0.
$$

For simplicity, we denote $h := - \partial_t \int \overline f \, dv $. The problem is equivalent to finding a vector field $u \in C^\infty(\R^+ \times\T^2)$, satisfying all four conditions:
\begin{eqnarray} 
\label{Eq:a.}
 u_{|t=0}= u_{|t=T}=0, \\
\label{Eq:b.}
\div u = h, \quad \forall t \in [0,T], x \in \T^2, \\
\label{Eq:c.}
\forall t\in [0,T],  \mbox{Supp\,} u(t,\cdot) \subset \omega, \\
\label{Eq:d.}
\int_{\T^2} u \, dx =0, \quad \forall t \in [0,T].
\end{eqnarray}
To build such a function $u$, we introduce $\theta \in C^\infty([0,T] \times\T^2)$ a solution of the elliptic equation:
\begin{equation*}
\Delta \theta =h, \quad \text{for } \ t \in [0,T], \  x \in \T^2.
\end{equation*}
Observe that $\int_{\T^{2}} h \, dx = 0$ due to \eqref{DefOvRho}. 
With the notations of Lemma \ref{PropAccelerePartout-mag}, we notice that we can take
$$\theta = -\partial_t \varphi.$$
We thus conclude that $\nabla \theta$ is divergence free outside $\omega$. Therefore, according to the Hodge-Poincar\'e lemma (recall that $\omega$ contains a hyperplane of the torus with normal vector $n$), there exists  $\Psi \in  C^\infty([0,T] \times\T^2)$ and $\alpha\in \R$ such that:
\begin{equation*}
\nabla \theta = \nabla^\perp \Psi + \alpha n \text{ in } \T^{2} \setminus \omega,
\end{equation*}
where $\alpha = \int_{\mathcal D} \nabla \theta \cdot n \, dx$. But the reference potential $\varphi$ defined above is designed as to satisfy $\int_{\mathcal D} \nabla \varphi \cdot n \, dx=0$ (see \eqref{supp} in Lemma \ref{PropAccelerePartout-mag}). This yields that $\alpha = 0$. Now, we introduce a smooth cut-off function $\eta$ such that $\eta \equiv 0$ on $U$, and $\eta \equiv 1$ on $\T^2 \setminus \omega$. 
We define a vector field $u$ as:
\begin{equation*}
u = \nabla \theta - \nabla^\perp( \eta \Psi),
\end{equation*}
and one can finally check that $u$ satisfies the four above conditions \eqref{Eq:a.}, \eqref{Eq:b.}, \eqref{Eq:c.} and \eqref{Eq:d.}.

We can now set:
\begin{equation*}
\overline f = \tilde f + \overline g.
\end{equation*}
Note that by construction, we have:
$$\int_{\R^2} \overline f \, dv = \int_{\R^2} \tilde f \, dv,$$ 
so that the electric field created by $\overline f$ (through the Poisson equation) is the same as the one created by $\tilde f$, and that 
$$\int \overline f \, \hat{v} \, dv = \int \overline g \, \hat{v} \, dv.$$ 
One can readily check that $\overline{f}$ is a solution of the Vlasov-Poisson system with a suitable source in $\omega$, and that \eqref{Cond4.1}, \eqref{Cond4.2}, \eqref{Cond4.3}, \eqref{LCC}, \eqref{ZM} are satisfied (up to shifting and rescaling the time interval $[0,T]$). The fact that $\overline{\rho}$ and $\overline{j}$ do not depend on $c$ (but only on $c_{0}$) is due to the fact that the constructions of Lemmas \ref{PropAccelerePartout-mag} and  \ref{PropSolRefCM} do not depend on $c$ (but only on $c_0$).
\end{proof}
\subsection{The approximation lemma}
Now, we want to use the following approximation lemma, which is valid for large values of the speed of light.
The goal is to approximate the Vlasov-Maxwell dynamics by the simpler one of Vlasov-Poisson with an external magnetic field. \par
For any function $A: \T^d \rightarrow \R$, $d=2$ or $3$, at least in $L^1$, we denote by $\hat A^{k}$ the k-th Fourier coefficient of $A$, for $k \in \Z^d$.
\begin{lem} \label{approx}
Let $d= 2$ or $3$. Let $E_0,B_0, j, \rho$ some $C^\infty$ functions. Let $(E,B)$ the solution  to the Maxwell equations:
\begin{equation*}
\left\{ \begin{aligned}
& \partial_t B + c \operatorname{curl} E = 0, \quad \partial_t E + c \operatorname{curl} B=-j \ \text{ in } [0,T] \times \T^{d}, \\
& \operatorname{div} E  = \rho - \int \rho \, dx,  \quad \operatorname{div} B = 0  \ \text{ in } [0,T] \times \T^{d} , \\
& E_{| t=0} = E_0, \quad B_{| t=0} = B_0  \ \text{ in } \T^{d}.
\end{aligned} \right. 
\end{equation*}
%
Assume that at initial time the compatibility conditions are satisfied:
\begin{align*}
& \operatorname{div} E_0 = \rho(0) - \int \rho(0) \, dx  \ \text{ in } [0,T] \times \T^{d}, \\ 
& \operatorname{div} B_0 =0  \ \text{ in }  \T^{d}.
\end{align*}
 Assume that the local conservation of charge is satisfied for all times:
\begin{equation*}
\forall x \in \T^d, \quad \partial_t \rho + \nabla_x\cdot j =0,
\end{equation*}
as well as the zero-mean current property:
\begin{equation} \label{ZMcond}
 \int_{\T^2} j \, dx = 0.
\end{equation}
Let $E_\infty$ be the solution of the Poisson equation:
\begin{equation*}
\left\{ \begin{aligned}
& \operatorname{curl} E_\infty=0  \ \text{ in } [0,T] \times \T^{d}, \\
& \operatorname{div} E_\infty  = \rho- \int \rho \, dx  \ \text{ in } [0,T] \times \T^{d}.
\end{aligned} \right.
\end{equation*} 
Then, we have for all $t>0$:
\begin{equation} \label{eqEstimB}
\left\| B - \int_{\T^{d}} B_0 \, dx - \tilde{B} \right\|_{L^\infty([0,t] \times \mathbb{T}^d)} \leq \frac{C_{\rho,j} (t+1)}{c},
\end{equation}
and
\begin{equation} \label{eqEstimE}
\left\| E- E_\infty - \int_{\T^{d}} E_0 \, dx - \tilde{E} \right\|_{L^\infty([0,t] \times \mathbb{T}^d)} \leq \frac{C'_{\rho,j} (t+1)}{c},
\end{equation} 
where  $C_{\rho,j}$ and $C'_{\rho,j}$ are constants depending only on $\rho$ and $j$ and where $\tilde{B}$ and $\tilde{E}$ are defined by their space Fourier coefficients:
\begin{equation} \label{DefLesTildes}
\left\{ \begin{array}{l}	
	\widehat{\tilde{B}}^{0}=0, \\
	\widehat{\tilde{B}}^{k} = \frac{k \wedge \hat{E}^{k}(0)}{|k|} \sin (tc|k|) + \hat{B}^{k}(0) \cos (tc|k|) \text{ for } k \neq 0, \\
	\widehat{\tilde{E}}^{0}=0, \\
	\widehat{\tilde{E}}^{k} = - \frac{k \wedge \hat{B}^{k}(0)}{|k|} \sin (tc|k|) 
	+ \Big[\hat{E}^{k}(0) - \frac{ik}{|k|^{2}}\hat{\rho}^{k}(0) \Big] \cos (tc|k|) \text{ for } k \neq 0.
	\end{array} \right.
\end{equation}
\end{lem}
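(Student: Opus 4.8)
\textbf{Proof plan for Lemma \ref{approx}.}
The plan is to work entirely on the Fourier side in the space variable. Writing $\hat E^k(t)$, $\hat B^k(t)$, $\hat\rho^k(t)$, $\hat j^k(t)$ for the space Fourier coefficients, the Maxwell system decouples into a family of ODEs indexed by $k \in \Z^d$. For $k = 0$ the equations $\partial_t \hat E^0 = -\hat j^0$ and $\partial_t \hat B^0 = 0$ together with the zero-mean current condition \eqref{ZMcond} give $\hat E^0(t) = \hat E^0(0)$ and $\hat B^0(t) = \hat B^0(0)$, which accounts for the constant terms $\int_{\T^d} E_0\,dx$ and $\int_{\T^d} B_0\,dx$ subtracted in \eqref{eqEstimB}--\eqref{eqEstimE}. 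So the whole content is in the modes $k \neq 0$.

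For $k \neq 0$, I would first record that the curl operator acts on a Fourier mode as (essentially) the cross product with $ik$, and split $\hat E^k$ into its component parallel to $k$ and its component orthogonal to $k$. The parallel (longitudinal) part is pinned down pointwise in time by the divergence constraint: $ik \cdot \hat E^k = \hat\rho^k$, i.e. the longitudinal part of $\hat E^k$ equals $-\frac{ik}{|k|^2}\hat\rho^k$, which is exactly (the $k$-th coefficient of) $E_\infty$; the constraint $\operatorname{div} B = 0$ kills the longitudinal part of $\hat B^k$ entirely. The transverse parts $(\hat E^k)_\perp, (\hat B^k)_\perp$ then solve a $2\times 2$ (vector) linear ODE system: $\partial_t (\hat B^k)_\perp = -c(ik)\wedge \hat E^k$, $\partial_t (\hat E^k)_\perp = -c(ik)\wedge \hat B^k - (\hat j^k)_\perp$, which is a forced harmonic oscillator with frequency $c|k|$. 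By Duhamel, the solution is the free oscillation — whose coefficients are precisely $\widehat{\tilde B}^k$ and $\widehat{\tilde E}^k - \widehat{E_\infty}^k$ as written in \eqref{DefLesTildes}, once one uses $ik\cdot\hat E^k(0) = \hat\rho^k(0)$ to rewrite the initial transverse electric field — plus a Duhamel integral $\int_0^t \frac{\sin(c|k|(t-s))}{c|k|}(\text{stuff involving } \hat j^k)\,ds$ and, after an integration by parts in $s$ using $\partial_t\hat\rho^k = -ik\cdot\hat j^k$, a companion term replacing $E_\infty(t)$ by $E_\infty(0)$. The key quantitative point is that every one of these correction integrals carries a factor $\frac{1}{c|k|} \le \frac 1c$ out front, and the remaining integrand is controlled by $\sup_{[0,t]}(|\hat j^k(s)| + |\hat\rho^k(s)|)$ times $(t+1)$.

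To pass from the mode-by-mode estimate to the $L^\infty([0,t]\times\T^d)$ estimate \eqref{eqEstimB}--\eqref{eqEstimE}, I would sum over $k$: since $\rho$ and $j$ are $C^\infty$, their Fourier coefficients decay faster than any polynomial, so $\sum_{k\neq 0}\frac{1}{|k|}\sup_{[0,t]}(|\hat j^k| + |\hat\rho^k|)$ converges, and one bounds a function's sup-norm by the $\ell^1$ norm of its Fourier coefficients. Here one has to be a little careful that the constants $C_{\rho,j}, C'_{\rho,j}$ depend only on finitely many Sobolev-type norms of $\rho$ and $j$ on $[0,T]$ (this is where the time-dependence of $\rho,j$ enters, but uniformly), not on $c$. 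I would also need the compatibility of the initial data ($\operatorname{div} E_0 = \rho(0) - \int\rho(0)$ and $\operatorname{div} B_0 = 0$) to ensure the longitudinal/transverse splitting is consistent at $t=0$, i.e. that the ``free oscillation'' genuinely starts from the given data minus its longitudinal (Poisson) part.

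\textbf{Main obstacle.} The routine part is the harmonic-oscillator Duhamel formula; the delicate bookkeeping — and what I expect to be the real work — is the integration by parts in the time variable inside the Duhamel integral that converts the forcing term $\hat j^k$ into the difference $\hat\rho^k(t) - \hat\rho^k(0)$ (via local conservation of charge), thereby producing exactly the term $-\frac{ik}{|k|^2}\hat\rho^k(t)$ that builds $E_\infty(t)$ in \eqref{eqEstimE} while leaving a genuinely $O(1/c)$ remainder. One must check that no boundary terms from this integration by parts spoil the $1/c$ gain, and that the ``$\cos(tc|k|)$'' term in \eqref{DefLesTildes} correctly absorbs the $\hat\rho^k(0)$ contribution from the initial transverse electric field. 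Keeping track of which pieces are $O(1)$ (hence must cancel against $\tilde E, \tilde B, E_\infty$) versus $O(1/c)$ is the only real subtlety.
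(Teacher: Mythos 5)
Your plan is correct and it takes a genuinely different (and in fact slicker) route than the paper. The paper's proof passes to the \emph{second-order wave equations} $\partial_t^2 E - c^2\Delta E = -c^2\nabla\rho - \partial_t j$ and $\partial_t^2 B - c^2\Delta B = c\,\operatorname{curl} j$ and solves them mode-by-mode with Duhamel's formula. Because the source $-c^2\nabla\rho$ is of size $c^2$, the resulting Duhamel integral is $O(c)$ and two successive integrations by parts in time (driven by local conservation of charge and by smoothness of $\rho$) are needed to peel off the boundary term $-\frac{ik}{|k|^2}\hat\rho^k(t) = \hat E_\infty^k(t)$, then the $\cos(tc|k|)$ correction to the initial data, and finally a bona fide $O(1/c)$ remainder. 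Your longitudinal/transverse splitting sidesteps all of this: the divergence constraint $ik\cdot\hat E^k(t) = \hat\rho^k(t)$ (which propagates in time thanks to local charge conservation and the compatibility at $t=0$) pins the longitudinal part of $\hat E^k$ down \emph{exactly} as $\hat E_\infty^k(t)$, with no approximation whatsoever; and the transverse pair $((\hat E^k)_\perp, \hat B^k)$ solves a forced oscillator whose forcing is only the transverse part of $\hat j^k$ — no factor of $c^2$ ever appears. The free oscillation then matches the $\widehat{\tilde E}^k, \widehat{\tilde B}^k$ of \eqref{DefLesTildes} once you use $\hat E^k(0) - \frac{ik}{|k|^2}\hat\rho^k(0) = (\hat E^k(0))_\perp$, and the Duhamel correction is directly controlled by $\hat j^k$. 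In short: what the paper extracts through two integrations by parts, you obtain for free from the constraint equation.

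One small point of bookkeeping in your plan deserves a flag. Your ``Main obstacle'' paragraph describes an integration by parts that converts $\hat j^k$ into $\hat\rho^k(t)-\hat\rho^k(0)$ to produce $E_\infty(t)$. That step belongs to the paper's wave-equation route, not to your decomposition: in your scheme the longitudinal part of $\hat j^k$ never enters the transverse ODE, and $E_\infty$ needs no Duhamel integral at all. What you \emph{do} need is an integration by parts (or the second-order Duhamel kernel $\frac{\sin(c|k|(t-s))}{c|k|}$, which amounts to the same thing) to extract the factor $\frac{1}{c|k|}$ from the transverse Duhamel term. That single step, using $\partial_t \hat j^k$, together with rewriting the initial velocity $\partial_t\widehat{(\hat E^k)_\perp}(0)$ via the Maxwell equation at $t=0$ (which produces the $-\frac{k\wedge\hat B^k(0)}{|k|}\sin(tc|k|)$ term of $\tilde E$ plus an $O(1/c)$ leftover involving $\hat j^k(0)$), and the analogous work for $\hat B^k$, is all that remains. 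The final summation over $k$, using rapid decay of Fourier coefficients of the $C^\infty$ data $\rho, j$, is exactly as you say and matches the paper.
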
 
\begin{rema}
The initial data in this lemma are somehow ill-prepared. In contrast, in the well-prepared case, which would correspond to the additional assumptions:
$$
 \operatorname{curl} E_0=0, \quad  \operatorname{curl} B_0=0,
$$
then we would have $\tilde{E}=0$ and $\tilde{B}=0$ and this would yield strong convergence results in $L^\infty([0,t] \times \T^d)$. In the present case, the convergence is only weak in time.
\end{rema}
\begin{proof}[Proof of Lemma \ref{approx}]
The electromagnetic field $E$ and $B$ satisfies the wave equations:
\begin{equation*}
\left\{ \begin{aligned}
& \partial^2_t E - c^2\Delta_x E=-c^2 \nabla \rho-\partial_t j, \\
& \partial^2_t B - c^2\Delta_x B=c \operatorname{curl} j.
\end{aligned} \right.
\end{equation*}
Thus we have:
\begin{equation*}
\left\{ \begin{aligned}
& \partial^2_t \hat E^{k} + c^2 | k |^2 \hat E^{k} =-ikc^2 \hat \rho^{k} - \partial_t \hat j^{k}, \\
& \partial^2_t \hat B^{k} + c^2 | k |^2 \hat B^{k}= c ik \wedge \hat j^{k},
\end{aligned} \right.
\end{equation*}
 that we easily solve, for $k \neq 0$, with Duhamel's formula:
 \begin{align}
 \hat E^{k} = \frac{1}{c | k |} \partial_t \hat E^{k} (0) \sin(t c | k |) + \hat E^{k}(0) \cos(tc | k |) + \int_0^t [-c^2 ik \hat \rho^{k}(s)-\partial_t \hat j^{k}] \frac{\sin((t-s)c | k |)}{c | k |}\, ds, \\
 \hat B^{k}= \frac{1}{c | k |} \partial_t \hat B^{k} (0) \sin(t c | k |) + \hat B^{k}(0) \cos(tc | k |) + \int_0^t  c ik \wedge \hat j^{k}(s) \frac{\sin((t-s)c | k |)}{c | k |}\, ds .
 \end{align}

Let us first deal with $\hat E^{k}$. We have, by integration by parts:
\begin{multline*}
 - \int_0^t c^2 ik \hat \rho^{k}(s) \frac{\sin((t-s)c | k |)}{c | k |}\, ds \\
 = \frac{-ik}{| k |^2}  \hat \rho^{k}(t) + \frac{ik}{| k |^2} \hat \rho^{k}(0) \cos (tc | k |)+ \int_0^t  \frac{ik}{| k |^2} \partial_s \hat \rho^{k}(s) \cos((t-s) c | k |)\, ds.
 \end{multline*}
 
We remark that:
 \[
 \frac{-ik}{| k |^2}  \hat \rho^{k}(t) = \hat E^{k}_\infty(t).
 \]
By another integration by parts, we get:
\begin{multline*}
\int_0^t  \frac{ik}{| k |^2} \partial_s \hat \rho^{k}(s) \cos((t-s) c | k |)\, ds \\
=  \frac{ik}{c | k |^3} \partial_t \hat \rho^{k} (0) \sin (tc | k|) + \int_0^t  \frac{ik}{c | k |^3} \partial_{ss} \hat \rho^{k}(s) \sin((t-s) c | k |)\, ds.
 \end{multline*}

Moreover, we have:
\[
\left|  \int_0^t  \frac{ik}{c | k |^3} \partial_{ss} \hat \rho^{k}(s) \sin((t-s) c | k |)\, ds \right| \leq \frac{1}{c}   \frac{t}{| k |^2} \Vert \partial_{tt} \hat \rho^{k} \Vert_{L^\infty_t}.
\]
Finally, in a straightforward manner, we have
\[
\left| \int_0^t \partial_t \hat j^{k} \frac{\sin((t-s)c | k |)}{c | k |}\, ds  \right| \leq \frac{t \Vert \partial_t \hat j^{k}\Vert_{L^\infty_t}}{c | k|}.
\]
so that for $k\neq 0$:
\begin{multline}
\Big| \hat E^{k}(t) - \hat E_\infty^{k}(t)  - \frac{k \wedge \hat{B}^{k}(0)}{|k|} \sin (tc|k|) 
		+ \Big[\hat{E}^{k}(0) - \frac{ik}{|k|^{2}}\hat{\rho}^{k}(0) \Big] \cos (tc|k|) \Big| \\
\leq \frac{t}{c| k |^2} \| \partial_{tt} \hat \rho^{k} \|_{L^\infty_t}
+ \frac{1}{c}   \left( \frac{\| \partial_{t} \hat \rho^{k} \|_{L^\infty_t}}{| k |^2} + \frac{\| \hat{j}^{k} \|_{L^\infty_t}}{|k|}\right)
+ \frac{t \| \partial_t \hat j^{k}\|_{L^\infty_t}}{c | k|}.
\end{multline}

For $k=0$, one has, using the zero-mean current condition \eqref{ZMcond}, that 

\begin{equation*}
\hat{E}^{0}(t) - \hat{E}_\infty^0(t) = \hat{E}^{0}(0) .
\end{equation*}

Finally we have:

\begin{multline*}
\left\| E- E_\infty -\int_{\T^{d}} E_0 \, dx - \tilde{E} \right\|_{L^\infty([0,t] \times \mathbb{T}^2)}  \\
\leq \frac{1}{c}   \sum_{k\neq 0} \frac{t}{| k |^2} \Vert \partial_{tt} \hat \rho^{k} \Vert_{L^\infty_t} 
+ \left( \frac{\| \partial_{t} \hat \rho^{k} \|_{L^\infty_t}}{| k |^2} + \frac{\| \hat{j}^{k} \|_{L^\infty_t}}{|k|}\right)
+ \frac{t}{| k|}\Vert \partial_t \hat j^{k}\Vert_{L^\infty_t}.
\end{multline*}
The summability is a simple consequence of the smoothness of the sources $\rho$ and $j$.
Hence, the estimation (\ref{eqEstimE}) holds with 
$$C'_{\rho,j}=   \sum_{k\neq 0} \frac{1}{| k |^2} \Vert \partial_{tt} \hat \rho^{k} \Vert_{L^\infty_t}
+ \frac{1}{c}   \left( \frac{\| \partial_{t} \hat \rho^{k} \|_{L^\infty_t}}{| k |^2} + \frac{\| \hat{j}^{k} \|_{L^\infty_t}}{|k|}\right)
+\frac{1 }{ | k|}\Vert \partial_t \hat j^{k}\Vert_{L^\infty_t}.$$
The magnetic field $B$ is treated likewise.
\end{proof}
\subsection{The reference solution} 
We now rely on the approximation lemma to find a suitable reference solution for the Vlasov-Maxwell system, provided that the speed of light is large enough, and that the initial magnetic field $B_0$ is of the form $c \, b_0$ where $b_0$ satisfies 
\begin{equation*}
\int_{\T^{2}} b_{0} \, dx \neq 0.
\end{equation*}
More precisely, we look for $c$ large enough such that there exists a smooth reference solution $(\overline{f}, \overline{E},\overline{B})$ to the Vlasov-Maxwell equation:
\begin{equation} \label{VlasovMax}
\left\{ \begin{aligned}
& \partial_t \overline{f} + \hat{v}\cdot\nabla_x \overline{f}+  \operatorname{div}_v \left[(E + \frac{\hat{v}^\perp}{c} \overline{B}) \, \overline{f}\right]=\mathbbm{1}_\omega \overline{G} , \\
& \partial_t \overline{E}_1 - c \partial_{x_2} \overline{B}=-\int_{\R^2} \overline{f}  \hat v_1 \, dv, \quad \partial_t \overline{E}_2 + c \partial_{x_1} \overline{B}=-\int_{\R^2} \overline{f}  \hat v_2 \, dv,  \\
& \partial_t \overline{B} +  \partial_{x_1} \overline{E}_2 -  \partial_{x_2} \overline{E}_1=0, \\
& \partial_{x_1} \overline{E}_1 +  \partial_{x_2} \overline{E}_2 =  \int_{\R^2} \overline{f}  \, dv- \int_{\T^2 \times \R^2} \overline{f}  \, dv \, dx , \\
\end{aligned} \right.
\end{equation} 
with $\overline{G}$ is a suitable source, which satisfies the following properties: 
\begin{align}
\label{Cond5.1}
& \overline f|_{t=0}=0, \ \overline f|_{t=T}=0, \ \overline E|_{t=0}=0 \text{ and } \overline{B}|_{t=0}=B_0, \\
\label{Cond5.2}
&\forall t\in [0,T], \ \forall v \in \R^{2}, \ \overline f(t,.,v) \text{ is compactly supported in } \omega, \\
\label{Cond5.3}
& \text{The characteristics } (X,V) \text{ associated with } \overline{E}+ \frac{\hat{v}^\perp}{c} \overline{B} \text{ satisfy the property:} \\
\nonumber
& \text{for any } x\in \mathbb{T}^2, v \in \mathbb{R}^2, \text{ there is } t \in [T/10,9T/10] \text{ such that } \\
\nonumber
& \hskip 5cm X(t,0,x,v) \in B(x_0,r_0) \quad \text{and  } |V|(t,0,x,v)\geq 4.
\end{align}

\begin{prop} \label{refVM}
Let $T>0$ large enough. There exists $c(T)>0$ such that for any $c>c(T)$, there exists a reference solution $(\overline f, \overline E, \overline B)$ of \eqref{VlasovMax} such that \eqref{Cond5.1}, \eqref{Cond5.2} and \eqref{Cond5.3} hold. 
\end{prop}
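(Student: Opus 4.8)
The plan is to build the reference solution on top of the relativistic Vlasov--Poisson reference of Proposition \ref{VPext}, replacing the Maxwell fields by Poisson ones by means of the approximation Lemma \ref{approx}. Fix $T$ as in Proposition \ref{VPext} and set $\mathfrak{b}:=\int_{\T^2}b_0\,dx$, which is nonzero by hypothesis. First I would invoke Proposition \ref{VPext} with this constant external field (and with its parameter $R$ chosen so as to contain the velocity supports at play), obtaining a reference $(\overline f,\overline{\varphi})$ of the relativistic Vlasov--Poisson system \eqref{VlasovPoisson} satisfying \eqref{Cond4.1}, \eqref{Cond4.2}, \eqref{Cond4.3}, \eqref{LCC}, \eqref{ZM}, with $\overline\rho:=\int\overline f\,dv$ and $\overline j:=\int\overline f\,\hat v\,dv$ independent of $c$. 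Next I would let $(\overline E,\overline B)$ be the solution of the Maxwell equations with source $(\rho,j)=(\overline\rho,\overline j)$ and data $\overline E|_{t=0}=0$, $\overline B|_{t=0}=B_0=c\,b_0$; the compatibility conditions hold since $\overline\rho(0)=0$ and $\operatorname{div}B_0=0$ holds automatically in dimension two, and the local conservation of charge and zero--mean current hypotheses required to apply Lemma \ref{approx} are precisely \eqref{LCC} and \eqref{ZM}. Finally I would keep $\overline f$ and set $\overline G:=\partial_t\overline f+\hat v\cdot\nabla_x\overline f+\operatorname{div}_v\big[(\overline E+\tfrac{\hat v^\perp}{c}\overline B)\overline f\big]$, so that $(\overline f,\overline E,\overline B)$ is a solution of \eqref{VlasovMax}. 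Since $\overline f(t,\cdot,v)$ is compactly supported in $\omega$ for every $t$ and $v$ by \eqref{Cond4.2}, and none of the operators defining $\overline G$ enlarges the spatial support, $\overline G$ is supported in $[0,T]\times\omega\times\R^2$; together with \eqref{Cond4.1} this immediately gives \eqref{Cond5.1} and \eqref{Cond5.2}.

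It remains to establish \eqref{Cond5.3}, which is the heart of the matter. Applying Lemma \ref{approx} with $d=2$, one has $\overline B=c\,\mathfrak{b}+\tilde B+O((t+1)/c)$ and $\overline E=\nabla\overline{\varphi}+\tilde E+O((t+1)/c)$ in $L^\infty([0,t]\times\T^2)$, the error constants being $c$--independent because $\overline\rho,\overline j$ are, with $\tilde E,\tilde B$ the explicit functions \eqref{DefLesTildes}, which here depend only on $b_0$. Hence the force driving the Vlasov--Maxwell characteristics decomposes as
\[
\overline E+\frac{\hat v^\perp}{c}\overline B=\big(\nabla\overline{\varphi}+\hat v^\perp\mathfrak{b}\big)+\Big(\tilde E+\frac{\hat v^\perp}{c}\tilde B\Big)+O(1/c),
\]
i.e.\ as the Vlasov--Poisson reference force of Proposition \ref{VPext}, plus a correction $\tilde E+\frac{\hat v^\perp}{c}\tilde B$ that is purely oscillating in time at frequencies of order $c$, plus a genuinely $O(1/c)$ remainder. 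I would then analyse this correction by repeated integrations by parts in the time variable, using the explicit $\sin(sc|k|),\cos(sc|k|)$ structure of \eqref{DefLesTildes}, so as to identify its net contribution to the characteristics over $[0,T]$: a bootstrap keeping $|V|$ in a fixed ball shows that the rapidly oscillating (``quiver'') part contributes only $O(1/c)$ to both the velocity and the position increments, while a slowly varying (essentially $s$--independent) residual drift, of the form $\nabla^\perp\chi(x)$ with $-\Delta\chi=b_0-\mathfrak{b}$, may survive. Controlling this residual drift via a generalized version of Lemma \ref{PropSolRefCM} (the one actually proved in the appendix), which is robust enough to preserve the bending property \eqref{GVCM}, and combining with a Gr\"onwall estimate on the flow, one obtains, for $c$ large, that every Vlasov--Maxwell characteristic $(X,V)$ still visits $B(x_0,r_0)$ with $|V|\ge 4$ at some time in $[T/10,9T/10]\supset[T/9,8T/9]$; this is \eqref{Cond5.3}, and it fixes the threshold $c(T)$.

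The main obstacle is exactly the treatment of the oscillating correction $\tilde E+\frac{\hat v^\perp}{c}\tilde B$. Because $B_0=c\,b_0$ is not a constant field, this correction does \emph{not} become small as $c\to\infty$ --- indeed $\tilde E$ has amplitude of order $c$, reflecting the ill--preparedness of the data already noted after Lemma \ref{approx} (where convergence is only weak in time). Its effect on the characteristics therefore cannot be absorbed by a naive perturbation argument; one must genuinely exploit the fast oscillation so that the velocity and position increments it produces over the fixed window $[0,T]$ are $O(1/c)$ up to the structured drift above, the latter being then handled by the robustness of the magnetic bending. This is where the ``consistency issues'' alluded to in the introduction, together with the strip assumption and the generalized form of Lemma \ref{PropSolRefCM}, come into play, and it is the only point where the lower bound $c\geq c(T)$ is really needed.
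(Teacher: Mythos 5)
Your first two steps match the paper's proof exactly: you take $(\overline f,\nabla\overline\varphi)$ from Proposition \ref{VPext} with $\mathfrak b:=\int_{\T^2} b_0\,dx$, solve the Maxwell equations with the ($c$-independent) sources $\overline\rho,\overline j$, observe that $(\overline f,\overline E,\overline B)$ solves \eqref{VlasovMax} with a source supported in $\omega$ (so that \eqref{Cond5.1} and \eqref{Cond5.2} hold), and then invoke Lemma \ref{approx} to write $\overline E+\frac{\hat v^\perp}{c}\overline B=\big(\nabla\overline\varphi+\hat v^\perp\mathfrak b\big)+\big(\tilde E+\frac{\hat v^\perp}{c}\tilde B\big)+O(1/c)$. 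You also correctly isolate the difficulty: the oscillating part $\tilde E+\frac{\hat v^\perp}{c}\tilde B$ does not go to zero in $L^\infty$ and must be handled by exploiting the fast time oscillation.

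However, the way you propose to close this is not the paper's argument, and the claim you make at the crucial point is unsubstantiated. The paper introduces a third, intermediate set of characteristics $(\tilde X,\tilde V)$ driven by the force $\overline E-\tilde E+\frac{\hat v^\perp}{c}(\overline B-\tilde B)$; by Lemma \ref{approx} this force equals $\nabla\overline\varphi+\hat v^\perp\mathfrak b+\mathfrak F$ with $\|\mathfrak F\|_\infty$ genuinely $O(1/c)$, so Lemma \ref{PropSolRefCM} applies to $(\tilde X,\tilde V)$ directly. The remaining task is then a Gronwall estimate on $(Y,W)=(X,V)-(\tilde X,\tilde V)$, in which the oscillating forcing only enters through the integrals $I_1=\int_0^t\tilde E(\tilde X)\cdot W\,ds$ and $I_2=\int_0^t\frac{\widehat{\tilde V}^\perp}{c}\tilde B(\tilde X)\cdot W\,ds$, whose vanishing as $c\to\infty$ is established by one integration by parts in $s$ using the $\sin(sc|k|),\,\cos(sc|k|)$ structure. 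You omit the intermediate characteristics and replace this with an averaging picture featuring a ``residual drift'' $\nabla^\perp\chi$ with $-\Delta\chi=b_0-\mathfrak b$. This drift appears nowhere in the paper: the Gronwall/oscillatory-integral argument concludes that $(X,V)\to(\tilde X,\tilde V)$, i.e.\ that nothing of order one survives. Moreover your two assertions are mutually contradictory --- if the quiver contributed only $O(1/c)$ to the velocity increment, no order-one drift could remain, and if an order-one residual $\nabla^\perp\chi$ did remain, it could not be absorbed by the $\|\mathfrak F\|_\infty\le\kappa$ allowance in Lemma \ref{PropSolRefCM}, since $\nabla^\perp\chi$ does not shrink with $c$. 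So the step you flag as ``the main obstacle'' is exactly where your proposal has a gap; the device that closes it in the paper is the auxiliary characteristics $(\tilde X,\tilde V)$ together with the explicit oscillatory-integral estimate, neither of which you supply.
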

\begin{proof}[Proof of Proposition \ref{refVM}]
Let $(\overline{f},\nabla \overline{\varphi})$ the reference function in Proposition \ref{VPext}, corresponding to the case of Vlasov-Poisson with external magnetic field, relevant for some large enough $T>0$. Let $(\overline E,\overline B)$ the electromagnetic field satisfying Maxwell equations with $\rho= \int \overline f \, dv$ and $j= \int \overline f \hat{v} \, dv $ as sources; we recall that by construction, they do not depend on $c$, but only on the lower bound $c_0$ (this fact is crucial). We observe that $(\overline f ,\overline E, \overline B)$ is a solution of the Vlasov-Maxwell system (with a suitable source in $\omega$). In addition, \eqref{Cond5.1} and \eqref{Cond5.2} clearly hold.

Let us define (recall \eqref{Th2C4})
\begin{equation*}
\mathfrak{b}=\frac{1}{c} \int_{\T^{2}} B_{0} \, dx = \int_{\T^{2}} b_{0} \, dx.
\end{equation*}
We introduce three different characteristics:
\begin{itemize}
\item the characteristics $(\overline{X},\overline{V})$ are associated with $\nabla \overline{\varphi} + \hat{v}^\perp \mathfrak{b}$,
\item the characteristics $(\tilde{X},\tilde{V})$ are associated with $\overline E - \tilde{E}+\frac{ \hat{v}^\perp}{c} (\overline{B} - \tilde{B})$,
\item the characteristics $(X,V)$ associated with $ \overline{E}+ \frac{\hat{v}^\perp}{c} \overline{B}$.
\end{itemize}
Let now show that if $c$ is large enough, the characteristics $(X,V)$ are close to the characteristics $(\overline{X}, \overline{V})$.
To that purpose we first prove that $(\tilde{X},\tilde{V})$ is close to $(\overline{X},\overline{V})$, and  that $(X,V)$ is close to $(\tilde{X},\tilde{V})$ (as $c \rightarrow + \infty$). \par
\ \par \noindent
{\bf 1.} Using the approximation Lemma \ref{approx} and defining $\tilde{B}$ and $\tilde{E}$ by \eqref{DefLesTildes}, we can choose $c$ large enough (larger than some $c_1(T)>0$), so that the characteristics $(\tilde{X},\tilde{V})$ are arbitrarily close to the characteristics $(\overline{X},\overline{V})$, and hence satisfy \eqref{Cond5.3}. 

Indeed, using \eqref{eqEstimB}-\eqref{eqEstimE}, we can set 
\begin{equation*}
\mathfrak{F}:= \overline{E} - \nabla \overline{\varphi} - \tilde{E} + \frac{ \hat{v}^\perp}{c} \left( \overline{B} - \tilde{B} - \int_{\T^{2}} B_{0} \, dx\right),
\end{equation*}
and observe that its norm can be made arbritrarily small, for $c$ large enough. Thus that the characteristics $(\tilde{X}, \tilde{V})$ satisfy \eqref{Cond5.3} follows from the definition of $(\overline{f}, \nabla \overline{\varphi})$ and an application of Lemma \ref{PropSolRefCM}. \par
\ \par \noindent
{\bf 2.} It remains to prove that, the characteristics $(X,V)$ are arbitrarily close to $(\tilde{X},\tilde{V})$ if $c$ is large enough (larger than some $c_2(T) \geq c_1(T)$). 
The convergences of $\tilde{E}$ and $\tilde{B}$ to zero as $c$ tends to infinity are only weak in time, but, as we will see, this is sufficient to establish this property on the characteristics.

Let us introduce
\begin{equation*}
(Y,W):=(X,V) - (\tilde{X},\tilde{V}).
\end{equation*}
Then $(Y,W)$ satisfies
\begin{equation*}
\left\{ \begin{array}{l}
\displaystyle
 \frac{dY}{dt} = W, \\
\displaystyle
 \frac{dW}{dt} = \overline{E}(X) - \overline{E}(\tilde{X})
 + \frac{\hat{V}^\perp}{c} \overline{B}(X) - \frac{\widehat{\tilde{V}}^\perp}{c}  \overline{B}(\tilde{X})
- \tilde{E}(\tilde{X}) - \frac{ \widehat{\tilde{V}}^\perp}{c} \tilde{B}(\tilde{X}).
\end{array} \right.
\end{equation*}
We remark that the right hand side of the equation of $W$ is bounded uniformly with respect to $c$. In the same way, we notice that $\tilde{X}$ is Lipschitz with respect to $t$, uniformly in $c$ and that $V \mapsto \hat{V}$ is Lipschitzian with a Lipschitz constant independent of $c$. Multiplying the equations by $Y$ and $W$ by $Y$ and $W$ respectively and integrating over time, one deduces that for some constant $C>0$ independent of $c$, one has
\begin{equation} \label{PreGronwall}
\frac{d}{dt} (Y^{2} + W^{2} ) \leq C (Y^{2} + W^{2} ) + \int_{0}^{t} \left( - \tilde{E}(\tilde{X}) - \frac{ \widehat{\tilde{V}}^\perp}{c} \tilde{B}(\tilde{X}) \right) \cdot W \, ds.
\end{equation}
Denote
\begin{equation*}
I_{1} := \int_{0}^{t} \tilde{E}(\tilde{X}) \cdot W \, ds \ \text{ and  } \ 
I_{2} := \int_{0}^{t}  \frac{ \widehat{\tilde{V}}^\perp}{c} \tilde{B}(\tilde{X}) \cdot W \, ds.
\end{equation*}
Let us show that uniformly on $[0,T]$, one has $I_{1} \rightarrow 0$ as $c \rightarrow +\infty$. Using the definition of $\tilde{E}$, and denoting $e_{k}(x)=\exp(i2\pi k \cdot x)$, we have
\begin{equation*}
I_{1} := \sum_{k \in \Z^{2} \setminus \{0 \}} \int_{0}^{t} W \cdot 
\left[
- \frac{k \wedge \hat{B}^{k}(0)}{|k|} \sin (sc|k|) 
+ \Big[\hat{E}^{k}(0) - \frac{ik}{|k|^{2}}\hat{\rho}^{k}(0) \Big] \cos (sc|k|)
\right] e_{k}(\tilde{X}) \, ds.
\end{equation*}
Integrating by parts, we infer
\begin{multline*}
I_{1} = \sum_{k \in \Z^{2} \setminus \{0 \}} \int_{0}^{t} \frac{1}{c|k|} \partial_{s}(W e_{k}(\tilde{X})) \cdot 
\left[
 \frac{k \wedge \hat{B}^{k}(0)}{|k|} \cos (sc|k|) 
- \Big[\hat{E}^{k}(0) - \frac{ik}{|k|^{2}}\hat{\rho}^{k}(0) \Big] \sin (sc|k|)
\right] \, ds \\
+ \sum_{k \in \Z^{2} \setminus \{0 \}} \frac{1}{c|k|}  W \cdot 
\left[
- \frac{k \wedge \hat{B}^{k}(0)}{|k|} \cos (tc|k|) 
+ \Big[\hat{E}^{k}(0) - \frac{ik}{|k|^{2}}\hat{\rho}^{k}(0) \Big] \sin (tc|k|)
\right] e_{k}(\tilde{X}) . \end{multline*}
Using $\partial_{s}(W e_{k}(\tilde{X})) = \partial_{s} W e_{k}(\tilde{X}) + W (\partial_{s} \tilde{X} \cdot k) i2 \pi e_{k}$ and the regularity of $B(0)$, $E(0)$ and $\rho(0)$, we obtain the claim that $I_{1} \rightarrow 0$ as $c \rightarrow +\infty$. The proof for $I_{2}$ is similar and therefore omited. \par
Using the convergences of $I_{1}$ and $I_{2}$, \eqref{PreGronwall} and Gronwall's lemma yields the claim on the characteristics. Therefore \eqref{Cond5.3} is satisfied for $c$ large enough. \par
\end{proof}
%
%
%
%
%
%
%
%
%
\subsection{With an arbitrary control set?}
The strip assumption is only used so that the local conservation of charge and the zero-mean current property are satisfied by the reference solution for Vlasov-Poisson, so that one can apply  the approximation Lemma \ref{approx}. More precisely, this geometrical hypothesis is only used to get \eqref{CirculationFluxNuls}, which in turn allows to ensure these properties.
For the case of an arbitrary open control set $\omega$, keeping the notations which follow Lemma \ref{PropAccelerePartout-mag}, we would need to construct a function $v$ satisfying the stronger property:
\begin{equation*}
\begin{aligned}
\int_\mathcal{D} v\cdot n \, dx= \int_\mathcal{D} v\cdot d\tau =0,\\
\int_{\mathcal{D}'} v\cdot n \, dx= \int_{\mathcal{D}'} v\cdot d\tau =0,
\end{aligned}
\end{equation*}
where $\mathcal{D}'$ is a line orthogonal to $\mathcal{D}$ and which does not meet $\omega$ (reduce $\omega$ if necessary).  Unfortunately, we were not able to build such a function. Maybe  the use of elliptic functions could be helpful in this general case.
\section{Construction of a solution of the non-linear problem}
\label{sec4}
To conclude the proof of Theorems \ref{TheoGCC}, \ref{Theo2} and \ref{Theo}, we can now rely on the reference solutions which were built in the previous sections to build a solution of the Vlasov-Maxwell system, which takes into account the initial data,  which remains ``close'' to the reference solutions and finally reaches the state $0$.

As the completions of the proofs are now very similar, we will focus mainly on the construction for Theorem \ref{Theo2} (and for Theorem \ref{Theo} which is a corollary of Theorem \ref{Theo2}). We will ultimately explain the needed modifications for Theorem \ref{TheoGCC}.

We will propose a functional framework which is bit different to those proposed for the case of the Vlasov-Poisson system with external magnetic field in \cite{GHK} and which is inspired by the work of Asano \cite{ASA} and Wollman \cite{Wol1,Wol2}. 


This section is organied as follows. We start by proving Theorem \ref{Theo2}. This is done in three steps. First, we construct a solution of the Vlasov-Maxwell system with source (together with the source itself) by a particular fixed point scheme (Subsection \ref{SubS:FPS}). Then we prove that this solution is relevant (Subsection \ref{Subsec:Rel}); this proves Theorem \ref{Theo2} when $f_{1}=0$ outside $\omega$. The general case $f_{1} \neq 0$ is explained in Subsection \ref{5.5}. Next in Subsection \ref{Subsec:TempsGrand} we deduce Theorem \ref{Theo}. In Subsection \ref{PrThGcc}, we prove Theorem \ref{TheoGCC}. The last subsections propose several possible generalizations of these results. \par

\subsection{Fixed point scheme} \label{SubS:FPS}
We recall that $x_0 \in \omega$ and $r_0>0$ are such that $B(x_0,2r_0) \subset \omega$. 
We introduce $R>0$ such that
\begin{equation} \label{Suppf0f1}
\mbox{Supp\,} f_{0}, \ \mbox{Supp\,} f_{1} \subset \T^{2} \times B(0,R/2).
\end{equation}
We apply Proposition \ref{refVM}; we fix $\overline{f}$ the reference function and $T$ the control time given in this proposition.
We will assume that $c \geq c(T)$. \par
Let $\varepsilon \in (0,1)$. We begin by introducing the operator ${\mathcal V}$ whose fixed point will give a solution to the nonlinear problem.
We first define the domain ${\mathcal S}_{\varepsilon,R}$ of
${\mathcal V}$ by
\begin{gather}
\nonumber
 \begin{array}{ll}
{{\mathcal S}_{\varepsilon,R}:= 
\Big\{ \ g \in C_b(Q_{T}) \ \Big/ \hfill} & {\hskip 8cm \hfill } \\  \end{array} \\
\label{DefS}
\begin{array}{ll}
{\mathbf a.\ } &
\forall t \in [0,T], \forall x \in \T^2, \ \operatorname{Supp}_v \, g(x,\cdot) \subset B(0,R), \\
{\mathbf b.\ } &
 {\|  g - \overline{f}
\|_{L^\infty([0,T]; H^3(\T^2 \times \R^2))} \leq \varepsilon, }\\
%
%
%
%
{\mathbf c.\ } &
 {\|  g - \overline{f}
\|_{W^{1,\infty}([0,T]; H^2(\T^2 \times \R^2))} \leq \varepsilon, }\\
{\mathbf d.\ } &
{\II \forall t \in [0,T],\ \int_{\T^{2} \times \R^{2}}  g(t,x,v)\,  dx \, dv = \int_{\T^{2} \times \R^{2}} f_{0}(x,v) 
dx \, dv\ ,}\\
{\mathbf e.\ } &
{ \forall t \in [0,T], \forall x \in \T^2, \partial_t \int_{\R^2} g \, dv + \operatorname{div}_x \int_{\R^2} \hat{v} \, g \, dv = 0,}\\
{\mathbf f.\ } &
{ g_{|t=0}=f_0 \Big\}.}
\end{array}
\end{gather}
%
%
Given $f_0$ small enough and satisfying \eqref{Suppf0f1}, one has $\tilde{f}+\overline{f} \in {\mathcal S}_{\varepsilon,R}$ (denoting by $\tilde{f}$ the solution of the relativistic free transport equation with initial data $f_0$), and consequently ${\mathcal
S}_{\varepsilon,R} \not = \emptyset$. From now on, this is systematically supposed to be the
case. \par
As in \cite{GLA,GHK}, we introduce the following subsets of $S(x_0,2 r_0)
\times \R^{2}$:
\begin{equation} \label{DefGammaMoins} 
\gamma^{-}:= \left\{ \II (x,v) \in S(x_0,2 r_0) \times
\R^{2} \ /\ |v| > \frac{1}{2} \text{ and } v\cdot \nu(x) <-\frac{1}{10}
|v|  \right\},
\end{equation}
\begin{equation} \label{DefGamma32Moins} 
\gamma^{2-}:= \left\{ \II (x,v) \in S(x_0,2 r_0) \times
\R^{2} \ /\ |v| \geq 1 \text{ and } v \cdot \nu(x) \leq -\frac{1}{8}
|v| \right\},
\end{equation}
\begin{equation} \label{DefGammaMoinsMoins} 
\gamma^{3-}:= \left\{ \II (x,v) \in S(x_0,2 r_0) \times
\R^{2} \ /\ |v| \geq 2 \text{ and } v \cdot \nu(x) \leq -\frac{1}{5}
|v| \right\},
\end{equation}
\begin{equation} \label{DefGammaPlus} 
\gamma^{+}:= \left\{ \II (x,v) \in S(x_0,2 r_0) \times
\R^{2} \ /\ v \cdot \nu(x) \geq 0 \right\},
\end{equation}
where $\nu(x)$ stands for the unit outward normal to the sphere $S(x_0,2 r_0)$ at point $x$.
One can readily check that
\begin{equation*}
\mbox{dist}( [S(x_0, 2 r_0) \times \R^2] \setminus \gamma^{2-}; \gamma^{3-}) >0.
\end{equation*}
\ \\
We introduce a $C^{\infty} \cap C_{b}^{1}$ smooth function $U:S(x_0,2 r_0)
\times \R^2 \to \R$, satisfying
\begin{equation} \label{DefU}
\left\{ \begin{array}{l}
{ 0 \leq U \leq 1, } \\
{U \equiv 1 \text{ in } [S(x_0, 2 r_0) \times \R^2] \setminus \gamma^{2-},} \\
{U \equiv 0 \text{ in } \gamma^{3-}.}
\end{array} \right.
\end{equation}%
We also introduce a cut-off function $\Upsilon:\R^{+} \to \R^{+}$, of
class $C^{\infty}$, such that
\begin{equation} \label{DefUpsilon}
\Upsilon = 0 \text{ in } \left[0,\frac{T}{48}\right] \cup  \left[\frac{47T}{48},T\right]  \ \text{ and } \
\Upsilon = 1 \text{ in } \left[\frac{T}{24},\frac{23T}{24}\right].
\end{equation} \par
Now, given $g \in {\mathcal S}_{\varepsilon,R}$, we associate $E^g,B^g$ in $C^0([0,T]; H^3(\T^2)) \cap C^1([0,T]; H^2(\T^2))$ as the solution of the Maxwell equations:
\begin{equation} \label{defMaxwell}
\left\{ \begin{aligned}
& \partial_t E^{g}_1 - c \partial_{x_2} B^{g}=-\int_{\R^2} g \hat v_1 \, dv, \quad \partial_t E^{g}_2 + c \partial_{x_1} B^{g}=-\int_{\R^2} g \hat v_2 \, dv,  \\
& \partial_t B^{g} +  \partial_{x_1} E^{g}_2 -  \partial_{x_2} E^{g}_1=0, \\
& \partial_{x_1} E^{g}_1 +  \partial_{x_2} E^{g}_2 =  \int_{\R^2} g \, dv- \int_{\R^2 \times \T^2} g \, dv \, dx,\\
& E^g|_{t=0} = E_0,  \quad B^g|_{t=0} = B_0.
\end{aligned} \right.
\end{equation} \par
Then, we define 
\begin{equation}
\label{nutilde}
\tilde{\mathcal{V}}(g):=f
\end{equation}
to be the solution of the following linear system
\begin{equation} \label{EqLin3}
\left\{ \begin{aligned}
& f(0,x,v) = f_{0} \text{ on } \T^{2} \times \R^{2}, \medskip \\
& \partial_{t} f + \hat{v}.\nabla_{x} f + (E^{g} + \frac{\hat{v}^\perp}{c} B^{g}).\nabla_{v} f = 0
\text{ in } [0,T] \times [(\T^{2} \times \R^{2}) \setminus \gamma^{-}], \medskip \\
& f(t^+,x,v) =[ 1- \Upsilon(t)] f(t^{-},x,v) + \Upsilon (t) U(x,v) f(t^{-},x,v) \text{ on } [0,T] \times \gamma^{-}.
\end{aligned} \right. 
\end{equation} 
\ \par
%
%
In order to explain the last equation of \eqref{EqLin3}, we introduce the characteristics $(X,V)$ associated with the force field $E^{g} + \frac{\hat{v}^\perp}{c} B^{g}$. 
In this equation, $f(t^{-},x,v)$ is the limit value of $f$ on
the characteristic $(X,V)(s,0,x,v)$ as time $s$ tends to
$t^-$. (Observe that for times smaller than $t$, but close to $t$, the corresponding
characteristic is not in $\gamma^{-}$.) 
When the characteristics $(X,V)$ meet $\gamma^-$ at time $t$, then
the value of $f$ at time $t ^+$ is fixed according to the last
equation in (\ref{EqLin3}). Loosely speaking, one can interpret the function $[ 1- \Upsilon(t)] + \Upsilon (t) U(x,v)$ as an ``opacity'' 
factor which varies according to time, the modulus of the velocity and to the angle of incidence of the trajectory on $S(x_{0}, 2r_{0})$. 
In this process, a part of $f$ is absorbed on $\gamma^{-}$,
which varies from the totality of $f$ to no absorption. Typically, when the velocity and the normal incidence of the trajectory are sufficient, the particles are absorbed.
We now consider a continuous linear extension operator
$$\overline{\pi}: \quad H^{3}([\T^{2}\setminus \overline{B}(x_{0},2r_{0})] \times \R^2;\R) \to
H^{3}(\T^{2}\times \R^2;\R),$$
which continuously maps $H^{s}([\T^{2}\setminus \overline{B}(x_{0},2r_{0})] \times \R^2;\R)$ to $H^{s}(\T^{2}\times \R^2;\R)$, for any $s \in [0,3]$ (by classical results, it is possible to consider such an extension operator because $\overline{B}(x_0,2 r_0)$ is smooth). In addition, it is clear that if a function $f$ has compact support in $v$, we can ensure that $\overline{\pi}(f)$ also satisfies this property.\par
Then we modify this operator in order to get a new operator $\tilde{\pi}$ which has the further property that for any integrable $f \in H^s((\T^{2}\setminus \overline{B}(x_{0},2r_{0})) \times \R^{2})$, one has
\begin{equation} \label{NeutralitedePi}
\int_{\T^{2} \times \R^{2}} \tilde{\pi}(f) \,  dv \, dx = \int_{\T^{2} \times \R^{2}}
f_{0}(x,v)\,  dv \, dx.
\end{equation}
This condition can easily be obtained by considering a regular, compactly supported,
nonnegative function $\mu$ with integral $1$ over $\overline{B}(x_{0},2r_{0}) \times \R^{2}$ and such that $\int \hat{v} \mu \, dv =0$, and adding to $\overline{\pi}(f)$ the function 
\begin{equation*}
\left[\int_{\T^{2} \times \R^{2}} f_{0} -\int_{\T^{2} \times \R^{2}} \overline{\pi} (f) \right] \mu.
\end{equation*}
We further modify the operator so that it satisfies the local conservation of charge. To this end, we add another regular distribution function $h$, compactly supported in $\overline{B}(x_0,2r_0) \times \R^2$, such that 
\begin{equation*}
\int h \, dv =0 
\ \text{ and } \
\div \int_{\R^{2}} h v \, dv = - \partial_t \int_{\R^{2}} \tilde{\pi}(f) \, dv - \operatorname{div} \int_{\R^{2}} \tilde{\pi}(f) v \, dv.
\end{equation*}
This can be done as in the proof of Proposition \ref{VPext}, but here this is much easier to build such a function since here we do not need the zero mean current condition \eqref{ZM}, and this can be achieved without making any assumption on the control set.
We finally obtain a continuous affine extension operator $\pi$. \par 
%
%
%
It is convenient to introduce another time-dependent function $\tilde{\Upsilon}$ depending on time such that:
\begin{equation} \label{DefUpsilon2}
\tilde{\Upsilon} = 0 \text{ in } \left[ 0, \frac{T}{100} \right]  \ \text{ and } \
\tilde{\Upsilon} = 1 \text{ in } \left[ \frac{T}{48}, T \right].
\end{equation}
Next, we introduce the continuous operator $\Pi$ 
given, for $f:([0,T] \times [ \T^{2} \setminus \overline{B}(x_{0},2r_{0})] \times \R^{2}) \cup ([0,T/48] \times \T^{2} \times \R^2) \rightarrow \R$, by:
\begin{equation} \label{RegleGrandPi}
(\Pi f)(t,x,v):= (1-\tilde{\Upsilon}(t)) f(t,x,v)
+ \tilde{\Upsilon}(t) [{\pi}f(t,\cdot,\cdot)](x,v).
\end{equation}
\par We finally define ${\mathcal V}[g]$ for $g \in {\mathcal S}_{\varepsilon,R}$ by:
\begin{equation} \label{DefV2}
{\mathcal V}[g]:= \overline{f} + \Pi ( f_{|([0,T] \times \left[
\T^{2} \setminus \overline{B}(x_{0},2r_{0}) \right] \times \R^{2}) \cup
([0,T/48] \times \T^{2} \times \R^2)} ) \text{ in } [0,T] \times \T^{2} \times \R^{2}.
\end{equation} \par \par

The goal is now to prove the existence of a fixed point for this operator. For clarity of exposure, we cut the argument into several lemmas.

\begin{lem}
\label{lem1}
The set $\mathcal{S}_{\epsilon,R}$, endowed with the $C^0_b$ topology, is a convex compact subset of $C^0_b(Q_T)$, and $\mathcal{V}$ is continuous in the $C^0_b$ topology.
\end{lem}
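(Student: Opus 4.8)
The plan is to verify the three assertions of Lemma \ref{lem1} in turn: convexity of $\mathcal{S}_{\varepsilon,R}$, compactness in the $C^0_b$ topology, and continuity of $\mathcal{V}$.

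\textbf{Convexity.} This is essentially immediate from the definition \eqref{DefS}. Conditions $\mathbf{a}$, $\mathbf{d}$, $\mathbf{e}$, $\mathbf{f}$ are affine (or linear) constraints on $g$, hence stable under convex combinations, while conditions $\mathbf{b}$ and $\mathbf{c}$ express that $g$ lies in a ball (centered at $\overline{f}$) of a normed space, which is convex. So I would simply run through the six conditions and observe that each is preserved by $g \mapsto \lambda g_1 + (1-\lambda) g_2$ for $\lambda \in [0,1]$.

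\textbf{Compactness.} The idea is to use the Arzel\`a--Ascoli theorem together with the fact that $\mathcal{S}_{\varepsilon,R}$ consists of functions supported in the fixed compact set $\T^2 \times \overline{B}(0,R)$ and bounded in $L^\infty([0,T];H^3(\T^2\times\R^2))$. First I would note uniform boundedness in $C^0_b(Q_T)$ via the Sobolev embedding $H^3(\T^2\times\R^2) \hookrightarrow C^0_b(\T^2\times\R^2)$ applied for each $t$, combined with condition $\mathbf{b}$. For equicontinuity: equicontinuity in $(x,v)$ at fixed $t$ follows from a uniform bound on an intermediate H\"older norm, itself a consequence of the $H^3$ bound and interpolation (or directly, $H^3 \hookrightarrow C^{1,\alpha}$ in dimension $4$); equicontinuity in $t$ follows from condition $\mathbf{c}$, which gives a uniform bound on $\partial_t g$ in $L^\infty([0,T];H^2)$, hence (again by Sobolev embedding $H^2 \hookrightarrow L^\infty$ is false in dimension $4$ — one needs to be slightly careful here, so I would instead use that $\partial_t g$ is bounded in $L^\infty_t H^2_{x,v}$ and interpolate with the $L^\infty_t H^3$ bound to get uniform continuity in time with values in $C^0_b$, or bound $\partial_t g$ in $L^\infty_t C^0_b$ directly by noting $H^3 \subset C^0$ suffices on the difference quotient via condition $\mathbf{b}$--$\mathbf{c}$ and the fundamental theorem of calculus). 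Thus $\mathcal{S}_{\varepsilon,R}$ is relatively compact in $C^0_b(Q_T)$. Then I would check it is closed: given a $C^0_b$-convergent sequence $g_n \to g$ in $\mathcal{S}_{\varepsilon,R}$, the support condition $\mathbf{a}$, the integral conditions $\mathbf{d}$, the initial condition $\mathbf{f}$ pass to the limit by uniform convergence; the Sobolev-ball conditions $\mathbf{b}$, $\mathbf{c}$ pass to the limit because bounded sets of $H^3$ (resp.\ $W^{1,\infty}_tH^2$) are closed under $C^0_b$ limits (weak-$*$ compactness plus lower semicontinuity of the norm); condition $\mathbf{e}$ passes to the limit in the sense of distributions. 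Hence $\mathcal{S}_{\varepsilon,R}$ is compact.

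\textbf{Continuity of $\mathcal{V}$.} Here I would trace through the construction \eqref{defMaxwell}--\eqref{DefV2} and argue that each building block depends continuously on $g$. First, the map $g \mapsto (E^g,B^g)$: the source terms $\int g\hat v\,dv$ and $\int g\,dv$ depend continuously (indeed Lipschitz-continuously) on $g$ in, say, $C^0([0,T];H^2(\T^2))$ norms, using the uniform $v$-support $\mathbf{a}$ and the $H^3$ bound $\mathbf{b}$; then the linear Maxwell system is well-posed and depends continuously on its sources, so $(E^g,B^g)$ depends continuously on $g$ in $C^0([0,T];C^1(\T^2))$. Next, the characteristics $(X,V)$ associated with the Lipschitz force field $E^g+\frac{\hat v^\perp}{c}B^g$ depend continuously on this force field by Gronwall/continuous dependence for ODEs; the absorption rule in \eqref{EqLin3} at the crossings of $\gamma^-$ introduces the only delicate point. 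Then $\tilde{\mathcal V}(g)=f$ solving \eqref{EqLin3} depends continuously on $g$, and finally the extension/absorption operator $\Pi$ in \eqref{RegleGrandPi} and the affine map in \eqref{DefV2} are continuous, so $\mathcal V$ is continuous.

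\textbf{Main obstacle.} The genuinely delicate step is the continuity of $g \mapsto f = \tilde{\mathcal V}(g)$ through the absorption mechanism in the last line of \eqref{EqLin3}: since $f$ is defined by transport along characteristics with a jump on $\gamma^-$ weighted by $[1-\Upsilon(t)]+\Upsilon(t)U(x,v)$, one must control how the crossing times and crossing points of the characteristics with $S(x_0,2r_0)$ vary with $g$, and ensure no pathological tangential or grazing behaviour spoils continuity. The choices of $\gamma^-,\gamma^{2-},\gamma^{3-}$ with $\operatorname{dist}([S(x_0,2r_0)\times\R^2]\setminus\gamma^{2-};\gamma^{3-})>0$ and the smoothness of $U$ and $\Upsilon$ are precisely what make the absorbed part of $f$ a continuous (indeed smooth-in-the-relevant-variables) function of the characteristics, hence of $g$; I would invoke that transversality — particles crossing $\gamma^-$ do so with $v\cdot\nu(x)<-\frac1{10}|v|$, uniformly away from tangency — to get the needed continuous dependence of the trace of $f$ on $S(x_0,2r_0)$, and then propagate it by the transport equation. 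This is the step where I expect most of the real work to lie, the rest being standard functional-analytic bookkeeping.
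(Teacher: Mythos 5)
Your proposal is correct and takes essentially the same route as the paper, which disposes of convexity as ``straightforward,'' invokes Ascoli's theorem for compactness, defers the continuity of $\mathcal{V}$ to the analogous arguments in \cite{GLA,GHK}, and only remarks that $E^g,B^g$ are Lipschitz in $x$ by Sobolev embedding. One small slip in your parenthetical: on the $4$-dimensional set $\T^2\times\R^2$, $H^3$ embeds into $C^{0,\alpha}$ for $\alpha<1$ but \emph{not} into $C^{1,\alpha}$ (the scaling gives exactly $3-4/2=1$, so one just misses $C^1$); your primary argument via an intermediate H\"older bound and interpolation is the correct one, and your interpolation fix for time-equicontinuity (combining the $W^{1,\infty}_tH^2$ bound from condition $\mathbf{c}$ with the $L^\infty_tH^3$ bound from $\mathbf{b}$) is exactly what is needed since $H^2(\R^4)\not\hookrightarrow L^\infty$.
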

\begin{proof}[Proof of Lemma \ref{lem1}]
That $\mathcal{S}_\epsilon$ is a convex  subset of $C_b^{0}(Q_T)$ is straightforward. That it is compact can be proved exactly as in \cite[Section 3.3]{GLA} or \cite{GHK}, relying on Ascoli's theorem.

Likewise, the continuity can be proved as in \cite[Section 3.3]{GLA} or \cite{GHK} and we therefore omit the proof. Note that by Sobolev imbeddings, $E^g$ and $B^g$ are Lipschitz-continuous in $x$.
\end{proof}

\begin{lem} \label{lem2}
There exist $\epsilon_0>0$ and $R_0>0$ such that for any $0<\epsilon<\epsilon_0$ and $R>R_0$, we have:
\begin{equation*}
\mathcal{V}(\mathcal{S}_{\epsilon,R}) \subset \mathcal{S}_{\epsilon,R}.
\end{equation*}
\end{lem}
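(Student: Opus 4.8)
The plan is to verify each of the defining constraints \textbf{a}--\textbf{f} of $\mathcal{S}_{\varepsilon,R}$ in turn for $\mathcal{V}[g]$, given $g\in\mathcal{S}_{\varepsilon,R}$, and to show that the ``small'' parameters at our disposal can be arranged so that all six hold simultaneously for $\varepsilon$ small and $R$ large. The structural point is that $\mathcal{V}[g]=\overline f+\Pi(f|_{\cdots})$, so that $\mathcal{V}[g]-\overline f$ is built from the solution $f$ of the linear absorption system \eqref{EqLin3}, restricted to the exterior region and early times and then extended/corrected by $\Pi$. Thus constraints \textbf{b} and \textbf{c} amount to bounding $\|f-\overline f\|$ in the relevant norms (using that $\Pi$ is continuous and equals the identity near $t=0$), while \textbf{d}, \textbf{e}, \textbf{f} are built into the construction of $\pi$ and the operators by design.

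First I would treat the support constraint \textbf{a}: since $f_0$ and $\overline f$ are supported in $\T^2\times B(0,R/2)$ (by \eqref{Suppf0f1} and the construction in Proposition~\ref{refVM}, recalling $\overline\rho$ supported in $\omega$ and compactly supported in $v$), and since the force field $E^g+\frac{\hat v^\perp}{c}B^g$ is bounded (uniformly, using the $H^3$ bounds on $g-\overline f$ and on the reference solution, plus the Maxwell estimates), the characteristics move the $v$-support by at most $CT$; absorption only removes mass, and $\pi$ adds only functions compactly supported in $v$. Hence for $R>R_0:=R/2+CT+1$ the support stays in $B(0,R)$. Next, for \textbf{b} and \textbf{c}, I would write the equation satisfied by $f-\overline f$: it solves a linear transport equation with the same force field $E^g+\frac{\hat v^\perp}{c}B^g$ but with a source coming from the difference of force fields $(E^g-\overline E)+\frac{\hat v^\perp}{c}(B^g-\overline B)$ acting on $\nabla_v\overline f$, together with the jump conditions on $\gamma^-$. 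The Maxwell energy/regularity estimates give $\|E^g-\overline E\|_{H^3}+\|B^g-\overline B\|_{H^3}\lesssim \|g-\overline f\|_{L^\infty_tH^3}+\|(E_0,B_0)-(\overline E,\overline B)|_{t=0}\|_{H^3}$, and the latter is controlled by $\kappa$ (the smallness of the data) since $\overline E|_{t=0}=0$, $\overline B|_{t=0}=B_0$. Feeding this into the standard $H^3$ estimate for the transport equation along $C^1$ characteristics (Gronwall in time, with the jump operator on $\gamma^-$ being a bounded operator on $H^3$ because $U$ and $\Upsilon$ are smooth and the geometry of $\gamma^-\subset S(x_0,2r_0)\times\R^2$ is fixed) yields $\|f-\overline f\|_{L^\infty_tH^3}\le C(\varepsilon+\kappa)$ and similarly in $W^{1,\infty}_tH^2$; choosing $\kappa$ small compared to $\varepsilon$ and then $\varepsilon<\varepsilon_0$ small makes $C(\varepsilon+\kappa)\le\varepsilon$. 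One must be slightly careful that $\Pi$, which multiplies by $1-\tilde\Upsilon$ plus an extension/correction, does not enlarge these norms beyond a fixed constant factor — this is why the nesting $\varepsilon$ small is needed, and why $\tilde\Upsilon\equiv 0$ near $t=0$ guarantees that \textbf{f} ($\mathcal{V}[g]|_{t=0}=\overline f|_{t=0}+f_0|_{t=0}=f_0$, using $\overline f|_{t=0}=0$) holds exactly.

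Finally, \textbf{d} and \textbf{e} are handled by the construction: the operator $\tilde\pi$ was designed in \eqref{NeutralitedePi} precisely so that the total mass of the extended function equals $\int_{\T^2\times\R^2}f_0$, and the further correction by $h$ was added so that the local conservation of charge $\partial_t\int g\,dv+\operatorname{div}_x\int\hat v\,g\,dv=0$ holds; combined with the fact that $\overline f$ itself satisfies \eqref{LCC} (from Proposition~\ref{refVM}, i.e. from Proposition~\ref{VPext}) and $\int\overline f\,dv\,dx$ is time-independent, and that $f$ solves the transport equation away from $\gamma^-$ (so that mass is conserved except for the absorption, which the $\Pi$-extension is built to compensate), one checks that $\mathcal{V}[g]$ satisfies \textbf{d} and \textbf{e} for all $t\in[0,T]$. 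Here one uses that the absorbed mass is reinjected through $\pi$ in the region $\overline B(x_0,2r_0)\subset\omega$, consistently with charge conservation, and that the cut-off $\tilde\Upsilon$ is arranged so that the time derivative in \textbf{e} matches. I expect the main obstacle to be \textbf{b}/\textbf{c}: getting the $H^3$ (and $W^{1,\infty}_tH^2$) estimate on the solution of the absorption transport system \eqref{EqLin3} uniformly in $c$ (or at least for $c\ge c(T)$), because the jump condition on $\gamma^-$ is not a smooth operation on all of phase space and one must verify that it preserves $H^3$ regularity — this is exactly the ``new technical difficulty'' flagged in the introduction, and it requires a careful analysis near the incidence set $\gamma^-$ using the transversality $v\cdot\nu(x)<-\tfrac1{10}|v|$ and the fact that $U\equiv1$ outside $\gamma^{2-}$ with $\operatorname{dist}([S(x_0,2r_0)\times\R^2]\setminus\gamma^{2-};\gamma^{3-})>0$, so that the ``opacity'' factor is locally constant ($\equiv 1$ or $\equiv 0$) except on a region where $f$ can be controlled.
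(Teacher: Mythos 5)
Your high-level plan (check constraints a--f one by one, observe that d, e, f hold by construction of $\pi$ and $\Pi$, reduce a to a support estimate along characteristics, and concentrate the work on b and c) is the right skeleton and matches the paper. But there is a genuine gap exactly at the place you yourself flag: the $H^3$ estimate for the solution of the absorption system \eqref{EqLin3}. You assert that the jump operator on $\gamma^-$ is ``a bounded operator on $H^3$ because $U$ and $\Upsilon$ are smooth and the geometry of $\gamma^-$ is fixed'', and you then propose a global Gronwall argument on $f-\overline f$ over all of $\T^2\times\R^2$. Neither of these works as stated. Taking the jump of an $H^3$ function across a codimension-one hypersurface costs half a derivative, so ``boundedness on $H^3$'' is not something you can simply invoke; moreover $\overline f$ does not satisfy the jump condition on $\gamma^-$, so the jump of $f-\overline f$ cannot be written purely in terms of $f-\overline f$, and your transport-with-source formulation of the equation for $f-\overline f$ does not close. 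Also, your Gronwall conclusion ``$\|f-\overline f\|\le C(\varepsilon+\kappa)\le\varepsilon$'' is false whenever the constant $C\ge1$, independently of how small $\kappa$ is.

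The paper's proof avoids all of this by a different decomposition: it never estimates $f-\overline f$ globally. Instead it estimates $f$ itself (not the difference) separately on $[\T^2\setminus\overline B(x_0,2r_0)]\times\R^2$ (call it $h_1$) and on $B(x_0,2r_0)\times\R^2$ (call it $h_2$), performs $L^2$, $H^1$, $H^2$, $H^3$ energy estimates on each piece, and then adds the two. The boundary terms on $S(x_0,2r_0)$ are not treated as abstract trace operators; rather one uses the structure of the absorption rule to compute them explicitly. At order zero the jump produces a term $\int_{\gamma^{in}}|h_1|^2(1-\alpha_1^2)\,\hat v\cdot(-\nu)\,d\sigma\,dv$ with $0\le\alpha_1\le1$, which has the right sign and can simply be dropped. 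For higher-order derivatives the key is the relation \eqref{ChangementDerivee} (and \eqref{ChangementDerivee2} for second derivatives) expressing $\nabla_{t,x,v}h_2$ on $\gamma^{in}$ in terms of $\nabla_{t,x,v}h_1$, which is derived by differentiating the jump rule tangentially and then recovering the missing normal derivative from the Vlasov equation (using the transversality $\hat v\cdot\nu\neq0$ on $\gamma^-$). This yields cross terms of the type $\int_{\gamma^{in}}\alpha\beta\,Dh_1\,h_1\,\hat v\cdot(-\nu)$, which are then handled by rewriting $Dh_1$ as a combination of tangential, time and velocity derivatives (again via the Vlasov equation) and integrating by parts in time, on $\gamma^{in}$, and in $v$, leaving only controllable terms and a boundary-in-time trace term that is absorbed by a trace inequality and Young's inequality. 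None of this is captured by ``the jump operator is bounded on $H^3$''. The smallness needed for b then comes from the smallness of $f_0$ (size $\kappa$, chosen much smaller than $\varepsilon$), the non-expanding effect of the absorption, and the boundedness of the extension operator $\pi$ --- not from an estimate on $f-\overline f$. Constraint c is then deduced from b$'$ by reading $\partial_t f$ off the Vlasov equation, and constraint a from the finiteness of the force field together with the fact that $\pi$ preserves compact support in $v$.

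Two smaller issues: your definition $R_0:=R/2+CT+1$ is circular as written (the lemma requires $R_0$ to be chosen before $R$); and your claim that $\overline f$ is supported in $\T^2\times B(0,R/2)$ requires a word of justification (it follows because $\overline f$ is built from the compactly supported profiles $\mathcal Z$, $\mathcal Z_i$, hence supported in $\T^2\times B(0,1)$, uniformly in $c$). In summary: your outline is in the right direction and you correctly identify where the difficulty lies, but the proposal does not actually overcome it, and the specific mechanism you invoke (boundedness of the trace jump on $H^3$, global Gronwall on $f-\overline f$) is not the one that works.
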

\begin{proof}[Proof of Lemma \ref{lem2}]
Let $g \in \mathcal{S}_{\epsilon,R}$: let us prove that $\mathcal{V}(g) \in  \mathcal{S}_{\epsilon,R}$.
That {\bf d.}, {\bf e.} and {\bf f.} hold is true by construction of $\mathcal{V}$.

That {\bf a.} and {\bf b.} hold comes from the fact that one can first prove:
\begin{equation*}
\begin{array}{ll}
{\mathbf a'.\ } &
\forall t \in [0,T], \forall x \in \T^2 \setminus \overline{B}(x_0,{2 r_0}), \ \operatorname{Supp}_v \, \tilde{\mathcal{V}}(g)(t,x, \cdot)  \subset B(0,R), \\
{\mathbf b'.\ } &
 {\|  \tilde{\mathcal{V}}(g) 
\|_{L^\infty([0,T]; H^3(\T^2 \setminus \overline{B}(x_0,{2 r_0}) \times \R^2))} \leq \varepsilon, }
\end{array}
\end{equation*}
as soon as $\epsilon$ is small enough and $R$ large enough. \par
These estimates are obtained with the same method as in the classical proof by Wollman \cite{Wol2} of the existence of solutions of the Vlasov-Maxwell system (without any source); special emphasis will be put on the new terms coming from the absorption procedure on ${S}(x_0,{2 r_0})$, that actually cause significant technical difficulties. The principle is to perform $L^2$ energy estimates separately on $[\T^2 \setminus \overline{B}(x_0,{2 r_0})] \times \R^2$ and on
$\overline{B}(x_0,{2 r_0}) \times \R^2$ (instead of on $\T^2 \times \R^2$ as in Wollman's paper), by using the Vlasov equation satisfied by 
$\tilde{\mathcal{V}}(g) $, and then make the sum of the two contributions.  The reason for this treatment is the discontinuity across the hypersurface $S(x_0,{2 r_0})$ which is caused by the absorption procedure.

In order to simplify the notations, we denote by $h_1$ the function $ \tilde{\mathcal{V}}(g) $ restricted to $[\T^2 \setminus \overline{B}(x_0,{2 r_0})] \times \R^2$ and by $h_2$ the function $\tilde{\mathcal{V}}(g) $ restricted to $B(x_0,{2 r_0}) \times \R^2$.

The estimates, first in $L^2$, then in $H^1$, etc., are obtained by induction. 
Let us start with the $L^2$ norm. By multiplying by $h_1$ the equation satisfied by $h_1$ and integrating over $[\T^2 \setminus \overline{B}(x_0,{2 r_0})] \times \R^2$, we obtain after an integration by parts:
\begin{align*}
\frac{d}{dt} \int_{[\T^2 \setminus \overline{B}(x_0,{2 r_0})] \times \R^2} |h_1|^2 \, dx \,dv =  \int_{S(x_0,{2 r_0})  \times\R^2} | h_1  |^2 \hat{v} \cdot \nu(x) \, d \sigma \, dv.
\end{align*}
Similarly, we obtain for $h_2$:
\begin{align*}
\frac{d}{dt} \int_{{B}(x_0,{2 r_0}) \times \R^2} |h_2|^2 \, dx \,dv = - \int_{S(x_0,{2 r_0})  \times\R^2} | h_2  |^2 \hat{v} \cdot \nu(x) d \sigma \, dv.
\end{align*}
For clarity, it is convenient to introduce the following notations:
\begin{align*}
\gamma^{in} :=  \{(x,v) \in S(x_0,{2 r_0}) \times \R^{2}, \ v \cdot (-\nu (x)) > 0\}, \\
\gamma^{out} := \{(x,v) \in S(x_0,{2 r_0}) \times \R^{2}, \ v \cdot (-\nu (x)) \leq 0\}.
\end{align*}

Using the definition of the absorption (see the last equation of \eqref{EqLin3}), we have
\begin{multline} \label{5.17}
\int_{\gamma^{in}} | h_1  |^2 v \cdot (-\nu(x)) \, d \sigma \, dv + \int_{\gamma^{out}} |  h_1   |^2 \hat{v} \cdot (-\nu(x)) \, d \sigma \, dv \\
- \int_{\gamma^{in}} |  h_2  |^2 v \cdot (-\nu(x)) \, d \sigma \, dv - \int_{\gamma^{out}} |  h_2   |^2\hat{v} \cdot (-\nu(x)) \, d \sigma \, dv \\
=\int_{\gamma^{in}} | h_1  |^2 \times [1-\alpha_1^2(t,x,v)] \hat{v}\cdot (-\nu(x)) \, d \sigma \, dv,
\end{multline}
for some smooth function $\alpha_{1}$ satisfying $0 \leq \alpha_1(t,x,v) \leq 1$. Therefore the right hand side of \eqref{5.17} is non-positive and one obtains $L^2$ estimates for  $\tilde{\mathcal{V}}(g)$:
\begin{align*}
\frac{d}{dt} \left(\int_{[\T^2 \setminus \overline{B}(x_0,{2 r_0})] \times \R^2} |h_1|^2 \, dx \,dv +  \int_{{B}(x_0,{2 r_0}) \times \R^2} |h_2|^2 \, dx \,dv\right) \leq 0.
\end{align*}
Thus, using \eqref{DefV2}, one obtains $L^2$ estimates for ${\mathcal{V}}(g)$:
$$
\int_{\T^2   \times \R^2} |{\mathcal{V}}(g)|^2 \, dx \,dv \leq C \| f_0 \|_{L^2(\T^2\times \R^2)}, \quad \forall t \in [0,T]. 
$$


Let us now treat the $H^1$ norm. We first differentiate the Vlasov equation with respect to space or velocity and we obtain as before (here $D:= \partial_{x_i}$ or $\partial_{v_i}$, $i=1,2$):
\begin{multline} \label{E1}
\frac{d}{dt} \int_{[\T^2 \setminus \overline{B}(x_0,{2 r_0})] \times \R^2} |D h_1|^2 \, dx \,dv 
=  \int_{S(x_0,{2 r_0})  \times\R^2} | D h_1  |^2 v \cdot \nu(x) \, d \sigma\, dv \\
- \int_{[\T^2 \setminus \overline{B}(x_0,{2 r_0})] \times \R^2} D\hat{v}\cdot \nabla_x h_1 Dh_1 \, dx \, dv
- \int_{[\T^2 \setminus \overline{B}(x_0,{2 r_0})] \times \R^2} D(E^g+ \frac{\hat v^\perp}{c}B^g)\cdot \nabla_v h_1 Dh_1 \, dx \, dv .
\end{multline}
Similarly, for $h_2$, we obtain the following estimate:
\begin{multline} \label{E2}
\frac{d}{dt} \int_{{B}(x_0,{2 r_0}) \times \R^2} |D h_2|^2 \, dx \,dv
=  - \int_{S(x_0,{2 r_0}) \times \R^{2}} | D h_2  |^2 v \cdot \nu(x) \, d \sigma \, dv \\
- \int_{{B}(x_0,{2 r_0}) \times \R^2} D\hat{v}\cdot \nabla_x h_2 Dh_2 \, dx \, dv
- \int_{{B}(x_0,{2 r_0}) \times \R^2} D(E^g+ \frac{\hat v^\perp}{c}B^g)\cdot\nabla_v h_2 Dh_2 \, dx \, dv .
\end{multline}
One can bound the last two terms of equations \eqref{E1} and \eqref{E2} (using the Cauchy-Schwarz inequality) by:
\begin{equation*}
C (1+ \| (E^g,B^g)\|_{L^\infty_t(H^3)})( \| h_1 \|_{H_1}^2+ \| h_2 \|_{H_1}^2).
\end{equation*}
It remains to estimate the first terms in the right hand sides of \eqref{E1} and \eqref{E2}.
One can use the definition of the absorption procedure in \eqref{EqLin3} to compute the derivatives  $\partial_t h_2$, $\partial_v h_2$ and $\partial_\tau h_2$ on $\gamma^{in}$ (denoting by $\tau$ a tangential unit vector on $S(x_0,{2 r_0})$) in terms of those of $h_1$ on $\gamma^{in}$. To obtain the ``missing'' derivative $\partial_{\nu} h_{2}$, one uses the Vlasov equation, for $(x,v) \in \gamma^{in}$. We can obtain the following relation on $\gamma^{in}$:
\begin{multline} \label{ChangementDerivee}
\nabla_{t,x,v} h_2(t,x,v) = 
\alpha(t,x,v) \nabla_{t,x,v}  h_1(t,x,v) + \beta(t,x,v,E^g,B^g) h_1(t,x,v), \\
\quad \text{for  }  t\in [0,T], x \in S(x_0,{2 r_0}), v \in \R^2,
\end{multline}
for some smooth bounded functions $\alpha$ and $\beta$ satisfying $0 \leq \alpha \leq 1$. We refer to \cite[Section 3.3]{GLA} for full details.

Summing the two contributions of $D h_1$ and $D h_2$, we deduce that the boundary terms can be rewritten using \eqref{ChangementDerivee}:
\begin{multline}
\int_{\gamma^{in}} |D  h_1  |^2 \hat{v} \cdot (-\nu(x)) \, d \sigma \, dv + \int_{\gamma^{out}} |D  h_1   |^2 \hat{v} \cdot (-\nu(x)) \, d \sigma \, dv \\
- \int_{\gamma^{in}} |D  h_2  |^2 \hat{v} \cdot (-\nu(x)) \, d \sigma \, dv - \int_{\gamma^{out}} |D  h_2   |^2 \hat{v} \cdot (-\nu(x)) \, d \sigma \, dv \\
=\int_{\gamma^{in}} | D h_1  |^2 \times [1-\alpha^2(t,x,v)] \hat{v} \cdot (-\nu(x)) \, d \sigma \, dv \\
- \int_{\gamma^{in}} \tilde\beta^2 |h_1|^2 \hat{v}\cdot (-\nu(x)) \,dv   -\int_{\gamma^{in}} \alpha \beta D h_1 h_1 \hat{v} \cdot (-\nu(x)) \, d \sigma \, dv. 
\end{multline}
The first term in the right hand side is non-negative. As we will see, the second one will be absorbed using a Gronwall estimate. We have to handle the last term, which is the most difficult one, in order to write it with the same form as the second one. We first claim that on $\gamma^{in}$, we can write (with again  $D:= \partial_{x_i}$ or $\partial_{v_i}$, $i=1,2$):
\begin{equation*}
D h_1 = a_1(t,x,v) \partial_t h_1 +  a_2(t,x,v) \partial_\tau h_1 +  a_3(t,x,v) \partial_v h_1,
\end{equation*}
where the $a_i$ are smooth functions (depending of course on the partial derivative $D$). This is simply proved by using the fact that $h_1$ is the solution of a Vlasov equation. Then we can compute the following integral:
\begin{multline*}
\int_0^t \int_{\gamma^{in}} \alpha \beta D h_1 h_1 \hat{v} \cdot (-\nu(x)) \, d \sigma \, dv \, ds   
=\int_0^t \int_{\gamma^{in}} \alpha \beta a_1(t,x,v) \partial_t \frac{h_1^2}{2} \hat{v} \cdot (-\nu(x)) \, d \sigma \, dv \, ds \\
+ \int_0^t \int_{\gamma^{in}} \alpha \beta a_2 (t,x,v) \partial_\tau  \frac{h_1^2}{2} \hat{v} \cdot (-\nu(x)) \, d \sigma \, dv \, ds \\
+ \int_0^t \int_{\gamma^{in}} \alpha \beta a_3 (t,x,v) \partial_v \frac{h_1^2}{2} \hat{v} \cdot (-\nu(x)) \, d \sigma \, dv \, ds.
\end{multline*}
At this point, we can integrate by parts with respect to $t$, $\tau$ and $v$, which yields only two boundary terms in time (at time $0$ and $t$ respectively, coming from the integration by parts in time) and terms of the form:
$$
\int_0^t \int_{\gamma^{in}} \beta_2(s,x,v) |h_1|^2 \hat{v} \cdot (-\nu(x)) \, d \sigma \,dv \, ds,
$$
for some smooth bounded function $\beta_2$.
The boundary terms in time are the two following one:
\begin{equation*}
\int_{\gamma^{in}} \alpha_{|s=0} \beta_{|s=0} f_0^2 \hat{v}\cdot (-\nu(x))\, d \sigma \, dv,
\end{equation*}
and
\begin{equation*}
\int_{\gamma^{in}} \alpha_{|s=t} \beta_{|s=t}  h_{1|s=t}^2 \hat{v}\cdot (-\nu(x))\, d \sigma \, dv.
\end{equation*}
To treat the second term, we use a trace estimate and Young's inequality, as follows:
\begin{align*}
\int_{\gamma^{in}} \alpha \beta  h_1^2 \hat{v} \cdot (-\nu(x)) \, dx\, dv & \leq C \| h_1 \|_{L^2}  \| h_1 \|_{H^1} \\
& \leq  C \| h_1 \|^2_{L^2} + \frac{1}{100}  \| \nabla h_1 \|^2_{H^1}. 
\end{align*}

Using this computation we obtain:
\begin{multline*}
\frac{1}{2} \sum_{D \in \{\partial_{x_i},\partial_{v_i}\}} \left(\int_{[\T^2 \setminus \overline{B}(x_0,{2 r_0})] \times \R^2} |D h_1(t)|^2 \, dx \,dv +  \int_{{B}(x_0,{2 r_0}) \times \R^2} |D h_2(t)|^2 \, dx \,dv \right) \\
\leq C \| f_{0} \|_{L^{2}} + C  \sum_{D \in \{\partial_{x_i},\partial_{v_i}\}}\int_0^t  \int_{[\T^2 \setminus \overline{B}(x_0,{2 r_0})] \times \R^2} |D h_1|^2 \, dx \,dv \, ds 
+  C \| h_1 \|^2_{L^2}  \\
+ \sum_{D \in \{\partial_{x_i},\partial_{v_i}\}} \left(  \int_{[\T^2 \setminus \overline{B}(x_0,{2 r_0})] \times \R^2} |Df_0|^2 \, dx \,dv 
+ \int_{{B}(x_0,{2 r_0}) \times \R^2} |D f_0|^2 \, dx \,dv\right),
\end{multline*}
and we conclude by Gronwall's lemma. \par
\ \par
The higher order derivatives are obtained in the same way by induction.
In particular, we observe that we can obtain a similar formula as \eqref{ChangementDerivee} for higher derivatives. Indeed, let us prove that for $i,j \in \{t,x_1,x_2,v_2,v_2\}$ we have:
\begin{equation} \label{ChangementDerivee2}
\partial^2_{i,j} h_2 = \alpha \partial_{i,j} h_1 + \beta^1_{i,j} \partial_ j h_1  + \beta^2_{i,j} \partial_ i h_1 + \gamma_{i,j} h_1 
\quad \text{on  } [0,T] \times S(x_0,2r_0) \times \R^2,
\end{equation}
where $\alpha$ is the same function as in \eqref{ChangementDerivee} and $\beta^1_{i,j},\beta^2_{i,j} ,\gamma_{i,j}$ are smooth bounded functions of $(t,x,v)$. From \eqref{ChangementDerivee}, we obtain easily that \eqref{ChangementDerivee2} is true for any $j$ when $i$ is a tangent direction to $[0,T] \times S(x_0,2r_0) \times \R^2$. For the remaining (normal) direction, we differentiate the Vlasov equation satisfied by $h_1$ and $h_2$:
\begin{equation*}
\partial_t (\partial_j h_k) + \hat{v} \cdot \nabla_x (\partial_j h_k) + F^g \cdot \nabla_v (\partial_j h_k) 
= -\left[ \partial_j \hat{v} \cdot \nabla_x h_k + \partial_j F^g \cdot \nabla_v h_k\right],
\end{equation*}
denoting by $F^g := E^g + \frac{\hat{v}^\perp}{c} B^g$ and $k=1,2$. Then we replace the first order derivatives of $h_2$ using \eqref{ChangementDerivee} and the second order  derivatives using \eqref{ChangementDerivee2} except the one involving the normal direction in $\hat{v} \cdot \nabla_x (\partial_j h_2)$, that is $(\hat{v}\cdot \nu)\nu \cdot \nabla_x (\partial_j h_2)$. As a result, we can express this last term in terms of $\alpha \, (\hat{v}\cdot \nu)\nu \cdot \nabla_x (\partial_j h_1)$ and lower order terms as in \eqref{ChangementDerivee2}.

Then, when one studies the contributions from $\gamma^{-}$ in the $H^2$ estimates, the most harmful terms come as before from the crossed terms $\alpha \partial_{i,j} h_1  \beta_{i,j} \partial_j h_1$, but as before, one can use an integration by parts argument. 
Similar statements hold for derivatives of order $3$. Note that when proving the $H^{2}$ and the $H^{3}$ estimates, the time derivatives have to be understood in the weak sense.

Once {\bf a'.} and {\bf b'.} are obtained, that  {\bf a.}, {\bf b.} hold for ${\mathcal{V}}(g)$ is a consequence of the construction of the extension operator.

There remains to prove that  {\bf c.} also holds. This is actually a consequence of {\bf b'.} and the fact that $h_1$ solves the Vlasov equation, so that 
$$\|\partial_t  h_1 \|_{L^\infty([0,T]; H^2([\T^2 \setminus \overline{B}(x_0,{2 r_0})] \times \R^2))} \leq C \varepsilon.$$
Finally that {\bf c.} holds is obtained by construction of the extension operator.

\end{proof}
We conclude this section with the following statement.
\begin{lem}
\label{lem3}
If $R>R_0$ and $0<\epsilon<\epsilon_0$, $\mathcal{V}$ has a fixed point in $\mathcal{S}_{\epsilon,R}$.
\end{lem}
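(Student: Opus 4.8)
The plan is to apply the Leray–Schauder fixed point theorem (in its Schauder form) to the operator $\mathcal{V}$ restricted to the set $\mathcal{S}_{\epsilon,R}$. By Lemma \ref{lem1}, the set $\mathcal{S}_{\epsilon,R}$, endowed with the $C^0_b(Q_T)$ topology, is a nonempty convex compact subset of the Banach space $C^0_b(Q_T)$, and the map $\mathcal{V}$ is continuous on it for this topology. By Lemma \ref{lem2}, choosing $R>R_0$ and $0<\epsilon<\epsilon_0$, we have the self-mapping property $\mathcal{V}(\mathcal{S}_{\epsilon,R})\subset \mathcal{S}_{\epsilon,R}$. These are exactly the hypotheses of Schauder's fixed point theorem, which therefore yields the existence of $f \in \mathcal{S}_{\epsilon,R}$ with $\mathcal{V}(f)=f$.

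More precisely, I would first fix $R>R_0$ and $\epsilon\in(0,\epsilon_0)$ (and assume, as is done throughout the section, that $f_0$ is small enough and supported so that $\mathcal{S}_{\epsilon,R}\neq\emptyset$). Then I would recall that $\mathcal{S}_{\epsilon,R}$ is a convex compact subset of the Banach space $C^0_b(Q_T)$, and that compactness here is genuinely needed: Schauder's theorem on a convex compact set of a Banach space requires only continuity of the self-map, not compactness of the map itself, so Lemma \ref{lem1} provides precisely the right package together with the continuity of $\mathcal{V}$. Invoking Lemma \ref{lem2} to get $\mathcal{V}:\mathcal{S}_{\epsilon,R}\to\mathcal{S}_{\epsilon,R}$, the classical Schauder–Tychonoff fixed point theorem then applies verbatim and delivers a fixed point.

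I do not expect any substantial obstacle here: the three preceding lemmas have been tailored precisely so that the fixed point statement is an immediate consequence, and the only thing to check is that their conclusions match the hypotheses of Schauder's theorem, which they do. If one wanted to be slightly more careful, the mild point worth a sentence is that the topology used (the $C^0_b$ topology) must be the one in which both compactness and continuity hold simultaneously — which is exactly what Lemmas \ref{lem1} and \ref{lem2} assert — so there is no mismatch of topologies. Hence the proof is essentially one line: apply Schauder's fixed point theorem to $\mathcal{V}$ on $\mathcal{S}_{\epsilon,R}$.

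For completeness, one should also note at the end of the argument what a fixed point gives us: if $f=\mathcal{V}(f)$, then $f$ agrees with $\overline{f}+\Pi(\cdots)$ and, unwinding the definitions of $\Pi$, $\pi$ and the absorption scheme \eqref{EqLin3}, $f$ is a genuine solution of the Vlasov–Maxwell system \eqref{VlaM} with an appropriate source supported in $\omega$ and with initial datum $f_0$. The verification that this source is indeed supported in $\omega$ and that $f$ has the expected regularity follows from the constructions in Subsection \ref{SubS:FPS}, and the relevance of $f$ (i.e. that it reaches the desired final state) is the object of the next subsection; none of this, however, is part of the present lemma, whose proof stops at the existence of the fixed point.
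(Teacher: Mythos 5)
Your argument coincides with the paper's proof, which simply invokes Schauder's fixed point theorem together with Lemmas \ref{lem1} and \ref{lem2}; the extra remarks you add (matching of topologies, non-emptiness of $\mathcal{S}_{\epsilon,R}$, and the unwinding of what a fixed point represents) are correct but not part of the paper's one-line argument.
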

\begin{proof}[Proof of Lemma \ref{lem3}]
This is an immediate consequence of Lemma \ref{lem1}, Lemma \ref{lem2} and the Schauder fixed point theorem.
\end{proof}
\subsection{Relevancy of the solution} \label{Subsec:Rel}
Let us now prove that the solution we have just built is relevant for the controllability problem of Theorem \ref{Theo2}.
\begin{lem} \label{lem4}
There exists $\epsilon_1>0$ such that for any $0<\epsilon<\epsilon_1$, all the characteristics $(X,V)$ associated with $\frac{\hat{v}^\perp}{c} B^f + E^{f}$, where $f$ is a fixed point of ${\mathcal V}$ in ${\mathcal S}_{\varepsilon,R}$, meet $\gamma^{3-}$ for some time in $[\frac{T}{12},\frac{11T}{12}]$.
\end{lem}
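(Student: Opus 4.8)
The plan is to compare the characteristics $(X,V)$ driven by the force field $E^f + \frac{\hat v^\perp}{c}B^f$ (where $f$ is the fixed point) with the characteristics $(\overline X,\overline V)$ driven by the reference field $\overline E + \frac{\hat v^\perp}{c}\overline B$ of Proposition \ref{refVM}, and then exploit property \eqref{Cond5.3} of the reference solution together with the geometric separation $\operatorname{dist}([S(x_0,2r_0)\times\R^2]\setminus\gamma^{2-};\gamma^{3-})>0$. First I would use the fixed point property $f = \mathcal V[f]$ together with defining estimates \textbf{b.}~and \textbf{c.}~of $\mathcal S_{\varepsilon,R}$ (which say $\|f-\overline f\|_{L^\infty_t H^3}\le\varepsilon$ and $\|f-\overline f\|_{W^{1,\infty}_t H^2}\le\varepsilon$) to conclude, via the Maxwell equations \eqref{defMaxwell} and the reference Maxwell system, that $\|(E^f,B^f)-(\overline E,\overline B)\|_{L^\infty([0,T];C^1(\T^2))}$ is bounded by $C\varepsilon$ for a constant $C$ independent of $\varepsilon$ (using Sobolev embedding $H^3(\T^2)\hookrightarrow C^1(\T^2)$ and continuous dependence of the linear Maxwell system on its source, noting that both fields have the same initial data $(E_0,B_0)$ — actually one must be slightly careful here since $E^f|_{t=0}=E_0$ while $\overline E|_{t=0}=0$; but $\|E_0\|_{H^3}\le\kappa$ is small, so this contributes a further $C\kappa$ term, equally controllable).

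Next, with $\|E^f + \frac{\hat v^\perp}{c}B^f - (\overline E + \frac{\hat v^\perp}{c}\overline B)\|_{L^\infty_{t,x}}$ small (uniformly on $B(0,R)$ in $v$, on which $f$ is supported so that only velocities $|v|\le R$ matter), a standard Grönwall argument on the ODE system for $(X-\overline X, V-\overline V)$ — using that both force fields are uniformly Lipschitz in $(x,v)$ on the relevant bounded velocity region and that $v\mapsto\hat v$ is globally Lipschitz with constant independent of $c$ — gives that $(X,V)$ is uniformly $C\varepsilon e^{CT}$-close to $(\overline X,\overline V)$ on $[0,T]$. Since $T$ is fixed by Proposition \ref{refVM}, choosing $\varepsilon<\varepsilon_1$ small makes this as small as we like. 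Now \eqref{Cond5.3} tells us that for every $x\in\T^2$ and $v\in\R^2$ there is $t_*\in[T/10,9T/10]$ with $\overline X(t_*,0,x,v)\in B(x_0,r_0)$ and $|\overline V(t_*,0,x,v)|\ge 4$; by closeness, $X(t_*,0,x,v)$ lies in $B(x_0,2r_0)$ and $|V(t_*,0,x,v)|\ge 3$, say. So the true characteristic penetrates into $B(x_0,2r_0)$; hence, starting from a point outside or on $S(x_0,2r_0)$, it must cross $S(x_0,2r_0)$ going inward at some time before $t_*$, and by continuity the ``inward crossing'' happens with $v\cdot\nu(x)<0$. One then needs that this inward crossing can be arranged to land in $\gamma^{3-}$, i.e.\ with $|v|\ge 2$ and $v\cdot\nu(x)\le -\tfrac15|v|$.

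To secure the angle and modulus conditions at the crossing point, I would argue as follows: since $|V|\ge 3$ persists in a neighbourhood of $t_*$ (the velocity varies continuously and the reference solution keeps $|\overline V|$ comparable to its initial size on the whole interval by the second line of \eqref{GVCM}, inherited through Lemma \ref{PropSolRefCM}), the modulus condition $|v|\ge 2$ is easy at any crossing of $S(x_0,2r_0)$ near time $t_*$. For the angle, note that a trajectory with speed $\ge 2$ that enters the ball $B(x_0,r_0)$ of radius $r_0$ while crossing the sphere $S(x_0,2r_0)$ of radius $2r_0$ cannot be nearly tangential: an elementary geometric estimate (a straight segment, up to a $C\varepsilon$ perturbation of the trajectory over the short transit time, entering from radius $2r_0$ down to radius $r_0$) forces $\hat v\cdot(-\nu(x))$ to be bounded below by a positive constant depending only on $r_0$, and hence $v\cdot(-\nu(x))\ge \tfrac15|v|$ after possibly shrinking constants. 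Therefore the first inward crossing of $S(x_0,2r_0)$ along the characteristic before reaching $B(x_0,r_0)$ lies in $\gamma^{3-}$; and this crossing time is in $[T/12,11T/12]$ because $t_*\in[T/10,9T/10]$ and the transit across the annulus $\{r_0\le|x-x_0|\le 2r_0\}$ takes time $O(r_0/|v|)$, which is negligible compared to $T$ for $T$ large, keeping the crossing inside the slightly larger window $[T/12,11T/12]$.

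The main obstacle I expect is the geometric/angle estimate at the sphere crossing: one must rule out the degenerate possibility that a characteristic merely grazes $S(x_0,2r_0)$ without entering deeply enough, and convert ``$\overline X(t_*)\in B(x_0,r_0)$, $|\overline V(t_*)|\ge 4$'' into a quantitative transversality statement on the true characteristic $X$ at its inward crossing. This requires using the lower bound on $|V|$ throughout the transit (so that the transit is fast and the trajectory is nearly a straight chord) together with uniform bounds on $dV/dt = E^f + \frac{\hat v^\perp}{c}B^f$, both of which are available from membership of $f$ in $\mathcal S_{\varepsilon,R}$ and the Maxwell bounds — but assembling the constants so that one lands precisely in the open set $\gamma^{3-}$ (rather than just in $\gamma^{-}$) is where the care is needed. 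The time-window bookkeeping, ensuring the crossing falls in $[T/12,11T/12]$, is routine once $T$ is taken large enough and $r_0$ is fixed.
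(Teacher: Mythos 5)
Your proposal is correct and follows essentially the same route as the paper: compare $(X,V)$ to the reference characteristics $(\overline X,\overline V)$ via Gr\"onwall, invoke \eqref{Cond5.3} to reach a slightly shrunken ball $B(x_0,3r_0/2)$ with $|V|\geq 3$, then extract a quantitative transversal crossing of $S(x_0,2r_0)$ landing in $\gamma^{3-}$. The paper compresses the final geometric/transversality step into a citation to \cite[Lemma 5.3]{GHK}, whereas you spell it out (chord geometry forcing $\hat v\cdot(-\nu)\gtrsim\sqrt{3}/2$ at the crossing, comfortably above the $1/5$ threshold of $\gamma^{3-}$, with the short transit time controlling drift in $|V|$ and the time window), which matches the intent and content of that omitted argument.
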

\begin{proof}[Proof of Lemma \ref{lem4}]
This is a consequence of Proposition \ref{refVM} and the fact that a fixed point $(f,E^f,B^f)$ is a small perturbation of the reference solution $(\overline{f},\overline{E},\overline{B})$. When $\epsilon$ is chosen small enough, we have:
\begin{equation*}
\begin{aligned}
\|B^f- \overline{B} \|_\infty \ll 1, \\
\| E^f - \nabla{\overline{\phi}} \|_\infty \ll 1.
\end{aligned}
\end{equation*}
This involves that the characteristics $(X,V)$ associated with the force field $E^f + \frac{\hat{v}^\perp}{c} B^f$ still satisfy the property:
\begin{equation} \label{returnouf}
\forall x \in \T^2, \forall v \in \R^2, \exists t \in [T/11,10T/11], \quad X(t,x,v) \in B(x_0,3r_0/2), \quad \text{and  } |V|(t,x,v) \geq 3.
\end{equation}
As in \cite{GHK}, we can prove that this implies that, provided that $\varepsilon@$ is small enough, the trajectories meet $\gamma^{3-}$ during $[\frac{T}{12},\frac{11T}{12}]$, so that all particles are absorbed.
The technical points are almost identical to those of the proof given in  \cite[Lemma 5.3]{GHK}, and so we omit them.
\end{proof}
We can finally end the proof of Theorem \ref{Theo2} when $f_{1}=0$ outside $\omega$.
\begin{proof}[Proof of Theorem \ref{Theo2} when $f_{1}=0$ outside $\omega$]
By Lemma \ref{lem3} and Lemma \ref{lem4}, we obtain a solution $f$ such that:
\begin{equation*}
\operatorname{Supp}[f(T,\cdot,\cdot)] \subset \omega \times \R^2.
\end{equation*}
We obtain furthermore that the distribution function ${f}$ satisfies for some function $G$ supported in $\omega$:
\begin{equation*}
\partial_t {f} + \hat{v} \cdot \nabla_x {f} +  \operatorname{div}_v \left[(E^{{f}} + \frac{1}{c}\hat{v}^\perp  B^{{f}} )f\right]
= G \text{ in } [0,T] \times \T^{2} \times \R^{2}.
\end{equation*}
The function $G$ is then our control function. Steering finally the state exactly to $f_{1}$ can be performed via a function supported inside $\omega$, which also yields a source term internal to $\omega$.
This ends the proof of Theorem \ref{Theo2} (in the case where the target $f_1 = 0$ outside $\omega$).
\end{proof}
\subsection{From $f_1 = 0$ to general targets}
\label{5.5}
Up to now, the proof of Theorem \ref{Theo2} has been completed for the case $f_1 = 0$ outside $\omega$. We cannot directly invoke a reversibility argument to get the general case, since we cannot precisely say at which state the electro-magnetic field $(E,B)$ is at final time.

Nevertheless, it is possible to modify the fixed point scheme to complete the proof in the general case.
Keeping the same notations, 
we introduce a function $\hat{f}$ which is the solution of the Vlasov equation:
\begin{equation} \label{EqLin4}
\left\{ \begin{array}{l}
\hat{f}(0,x,v) = f_{1} \text{ on } \T^{2} \times \R^{2}, \medskip \\
\partial_{t} \hat{f} + \hat{v}\cdot\nabla_{x} \hat{f} + (E^{g} + \frac{\hat{v}^\perp}{c} B^{g})\cdot\nabla_{v} \hat{f}= 0
\text{ in } [0,T] \times [(\T^{2} \times \R^{2}) \setminus \gamma^{-}], \medskip \\
\hat{f}(t^+,x,v) =[ 1- \Upsilon(t)] \hat{f}(t^{-},x,v) + \Upsilon (t) U(x,v) \hat{f}(t^{-},x,v) \text{ on } [0,T] \times \gamma^{-}.
\end{array} \right. 
\end{equation}
We observe, as in the proof of Theorem \ref{Theo2} (using the property of the characteristics), that $\hat{f}_{|t=T} \equiv 0$.
 

We finally define the fixed point operator ${\mathcal V}$ which maps $g \in {\mathcal S}_{\varepsilon,R}$ to:
\begin{equation} \label{DefV3}
{\mathcal V}[g]:= \overline{f} + \Pi ( (f+\hat{f}(T-t,x,-v)) _{|([0,T] \times \left[ \T^{2} \setminus B(x_{0},2r_{0}) \right] 
\times \R^{2}) \cup ([0,T/48] \times \T^{2} \times \R^2)} ) \text{ in } [0,T] \times \T^{2} \times \R^{2},
\end{equation}
and conclude as previously.
\subsection{Proof of Theorem \ref{Theo}} 
\label{Subsec:TempsGrand}
As already explained, Theorem \ref{Theo} is a consequence of Theorem \ref{Theo2}.
We recall that the speed of light here is equal to $1$. Let us show how to reduce to a sufficiently large $c$. The principle is to rely on the scale invariance of the equations. Indeed, taking $\lambda$ large enough in \eqref{lamb} increases the value of the ``effective'' speed of light.

Let us be more specific. Given $\omega$ (satisfying the strip assumption) and $b_{0}$, we apply Theorem \ref{Theo2}. We obtain a time $T_{0}$, and apply the result to $T=T_{0}$. We obtain a constant $\kappa > 0$ and a speed of light $c_{T_0}>0$, such that whenever $c \geq c_{T_0}$, starting from $f_{0}, E_{0}, f_{1}$ satisfying \eqref{Th2C1}-\eqref{Th2C4}, we have a solution of Vlasov-Maxwell with speed of light $c$, steering $f_{0}$ to $f_{1}$ in time $T_{0}$. 

We define $\lambda_{1} := 1/ c_{T_0}$. For $\lambda \in (0, \lambda_{1}]$, we observe that
$$c := 1/\lambda$$ 
satisfies $c \geq c_{T_0}$. 
We define $\kappa_1:= \min(1, \lambda^{2}) \kappa$. Given $\tilde{f}_{0}, \tilde{f}_{1}, \tilde{E}_{0}$ satisfying \eqref{Th3C1}-\eqref{Th3C2} and such that
\begin{equation*}
\| (\tilde{f}_0,\tilde{E}_0)\|_{H^3} \leq \kappa', \quad \| \tilde{f}_1\|_{H^3} \leq \kappa', 
\end{equation*}
then 
$$f_0(x,v):=\tilde{f}_0(x, \lambda v), \quad f_1(x,v):=\tilde{f}_1(x, \lambda v), \quad E_{0}(x) := 1/\lambda^{2} \tilde{E}_{0}(x)$$ 
also satisfy \eqref{Th2C1}-\eqref{Th2C2} and moreover \eqref{Th2C3}. We also introduce the initial magnetic field $\tilde{B}_0$  (and its rescaled counterpart $B_0$):
$$\tilde{B}_0(x) := \lambda b_0 (x), \quad B_0(x) := 1/\lambda^2 \tilde{B}_0(x).$$
Therefore, we observe that:
$$B_0(x) = c b_0(x)$$
and we can apply Theorem \ref{Theo2} to obtain a control function $G_{\lambda}$ such that the solution $f^{\lambda}$ of the system:

\begin{equation*}
\left\{ \begin{aligned}
& \partial_t f^{\lambda} + \hat{v}_\lambda\cdot\nabla_x f^{\lambda} +  \operatorname{div}_v \left[(E^{\lambda}+ {\lambda}\hat{v}_\lambda^\perp B^{\lambda}) f^{\lambda}\right] = \mathbbm{1}_\omega G_\lambda, \\
& \partial_t E^{\lambda} + \frac{1}{\lambda} \operatorname{curl} B^{\lambda} = -\int_{\R^2} f^{\lambda}\hat v_\lambda \, dv,  \quad \partial_t B^{\lambda} + \frac{1}{\lambda} \operatorname{curl} E^{\lambda}=0, \\
& \operatorname{div} E^{\lambda}  = \int_{\R^2} f^{\lambda} \, dv- \int_{\R^2\times \T^2} f^{\lambda} \, dv \, dx, 
 \quad \operatorname{div} B^{\lambda} = 0, \\
& f^{\lambda}_{| t=0} = f_0, \quad E^{\lambda}_{| t=0} =  E_0, \quad B^{\lambda}_{| t=0} = B_0 \, \,(=c b_0), \\
\end{aligned} \right.
\end{equation*}
with $\hat{v}_{\lambda} = \frac{v}{\sqrt{1+ \frac{|v|^2}{(1/\lambda)^2}}}$, satisfies:
\begin{equation}
f^{\lambda}_{| t=T_0} =f_1.
\end{equation}
One deduces a solution $(f, E, B)$ to the Vlasov-Maxwell in the original variables by performing the ``inverse'' change of variables: 
$$
f( t/\lambda,x, \lambda v):=f^\lambda(t,x,v), \quad
E( t/\lambda,x)  :=\lambda^{2} E^{\lambda}(t,x), \quad
B( t/\lambda,x)  := \lambda^{2} B^{\lambda}(t,x).
$$
In particular, we observe that
\begin{equation*}
f_{| t= T_0/\lambda} =\tilde{f}_{1}.
\end{equation*}
which gives the controllability in large time of Theorem \ref{Theo}. 
\subsection{Proof of Theorem \ref{TheoGCC}} 
\label{PrThGcc}

In order to prove Theorem \ref{TheoGCC}, we modify the previous construction as follows.
First, we recall that in the construction of the reference solution in Section~\ref{sec2}, we have considered the sequence of sets $\overline{\omega'} \subset \omega'' \subset \overline{\omega''} \subset \omega$. Let $\overline{f}$ be the reference solution given by Proposition \ref{propGCC}. \par
Let us describe the principle of the absorption in this case. We consider a covering of balls $(B(x,r_0))_{x \in \omega''}$ of $\overline{\omega''}$, whose radius  $r_0$  is small enough so that the covering with balls of radius $2r_0$ is included in $\omega$. By compactness, we can assume that this covering is made of a finite number of balls $B(x_1, r_0), ... , B(x_k,r_0)$. \par
On each ball $B(x_i, 2 r_0)$ (for $i = 1,...,k$) we impose the same absorption procedure as before. Then one can proceed with almost the same proof as Theorem \ref{Theo2}. The only difference appears in the proof of Lemma \ref{lem2}: we now have to estimate the norms $L^2$, $H^1$, etc, in \emph{each} connected component of $\T^2 \setminus \cup_{i=1}^k S(x_k,2r_0)$. Remark that there is only a \emph{finite} number of such regions. Then, we make the sum of all contributions of each region to get the relevant estimates. \par
Finally, we obtain the construction of a control supported in $\omega$ which brings the state $f$ of the system to a value satisfying:
\begin{equation*}
\operatorname{Supp}[f(T,\cdot,\cdot)] \subset \omega \times \R^2,
\end{equation*}
for $T$ large enough.
Taking  $\tilde{T} > T$ large enough so that one can apply Theorem \ref{controlMaxwell} (control of linear Maxwell equations), we can further modify this solution so that
\begin{equation*}
\operatorname{Supp}[f(t,\cdot,\cdot)] \subset \omega \times \R^2, \quad \text{for any  } t \in [T,\tilde{T}],
\end{equation*}
and so that at time $\tilde{T}$, we have
$$
f(\tilde{T}, \cdot, \cdot) \equiv 0 \quad \text{on  } (\T^2\setminus \omega) \times \R^2,
$$
and  
$$(E(\tilde{T},\cdot), B(\tilde{T},\cdot))=(0,0).$$ 
By a slight abuse of notation, we still write $T$ instead of $\tilde{T}$. We clearly obtain that the distribution function ${f}$ satisfies for some function $G$ supported in $\omega$:
\begin{equation*}
\partial_t {f} + \hat{v}.\nabla_x {f} +  \operatorname{div}_v \left[ (E^{{f}} + \frac{1}{c}\hat{v}^\perp B^{{f}} )f \right]
= G \text{ in } [0,T] \times [\T^{2}  \setminus \omega ] \times \R^2.
\end{equation*}
The function $G$ is then our control function, which brings the state to $(0,0,0)$. \par
We can finally directly invoke a reversibility argument (take $\lambda= -1$ in \eqref{rescale}) or proceed as in Subsection \ref{5.5}, to steer the system to general $(f_1,E_1,B_1)$.


%
%
%
%
%
%
\subsection{A $C^1$ or $H^s$ Cauchy theory ?}
\label{gene0}

In the Vlasov-Poisson case, in order to build the solution in a $C^1$ framework, we used the same absorption procedure which roughly consists in absorbing fast  particles which enter the control zone with sufficient normal incidence.
This procedure is a bit costly in terms of regularity, and it causes a loss of moments in velocity. In \cite{GLA, GHK}, this is compensated via interpolation by a loss of regularity. Nevertheless, this regularity is re-gained afterwards using the elliptic regularity of the Poisson equation. One first difficulty in the Maxwell case lies in the fact that this is not possible to do so (as the Maxwell equations are hyperbolic). A more severe obstruction lies in the hypothetical application of the Glassey-Strauss-Schaeffer $C^1$ Cauchy theory for Vlasov-Maxwell. Since the Maxwell equation can be seen as a wave equation, it is quite remarkable to be able to build solutions in a $C^1$ framework (even worse, in the wave equation form, there seems to be  a loss of derivative). Actually, one can observe that the theorems of Glassey-Strauss-Schaeffer crucially rely on the fact that the distribution function \emph{is a solution} to a Vlasov equation (without source). This allows to trade derivatives like $(\partial_t + \hat{v} \cdot \nabla_x)$ for derivatives in $v$. Then, integrations by parts in $v$ allow these authors to perform suitable $C^1$ estimates. For our problem, this is unfortunately impossible to do so, since in the control zone, the Vlasov will feature a source (the control function) with only $C^{0}$ regularity. Therefore it seems difficult to build solutions with these methods.

Instead of the $C^1$ theory, we have thus opted for $H^3$ theory of Asano and Wollman (although it is less sharp in terms of regularity). 

\subsection{Removing the smallness assumption on $B$ in Theorem \ref{TheoGCC}}
\label{gene1}

Let us explain how one can remove the smallness assumption on $B_0$ (and $B_1$) in Theorem \ref{TheoGCC}. When one applies the return method, instead of building a reference solution which goes from $(0,0,0)$ to $(0,0,0)$, one builds instead a solution going from $(0,0,B_i)$ to $(0,0,0)$. To do so, one can still rely on the controllability of the Maxwell equations. Then we apply the same procedure as above.
\subsection{Generalization to data with unbounded support in velocity in Theorem \ref{TheoGCC}}
\label{gene2}
Until now, we have dealt with distribution functions having a bounded support in velocity. As claimed before,  we expect the theorems contained in this paper to be generalized to distribution functions with unbounded support in velocity, but having exponential decay at infinity. 

To that purpose, instead of the standard Sobolev space $H^3$ of the local theory of Wollman \cite{Wol2}, we may use weighted Sobolev spaces (accounting for the exponential decay in velocity), following the work of Asano \cite{ASA}. It is likely (computations get more tedious) that the fixed point procedure proposed in this section is still relevant.

%
%
%
%
%
%
%
%
%
%
\section{Appendix: Proof of Lemma \ref{PropSolRefCM}}
%
%

In this section, we provide a proof of Lemma  \ref{PropSolRefCM}.  Actually we establish a result which is more general, by allowing more generic magnetic fields. Let us first describe the class of magnetic fields $\mathfrak{b}$ that are admissible.
\bigskip
\begin{defi}
\label{defben}
If a magnetic field $\mathfrak{b}$ is such that either $\mathfrak{b}$ or $-\mathfrak{b}$ satisfies the conditions {\bf 1.} and {\bf 2.} below, we say that $\mathfrak{b}$ satisfies the {\bf bending condition}. 
\end{defi}
\noindent
{\bf Condition  1. Geometric control condition on $\mathfrak{b}$.} We assume that there exists a compact set $K$ of $\mathbb{T}^2$ on which $b>0$ and which satisfies the geometric control condition:
\begin{equation} \label{GeometricCondition}
\forall x \in \mathbb{T}^2, \ \forall e \in \mathbb{S}^{1}, \ \exists y \in \R^{+} \text{ such that } x+ye \in K.
\end{equation}
For a compact subset $K$ of $\T^{2}$ and $r>0$ we denote
\begin{equation} \label{Kr}
K_{r} := \{ x \in \T^{2} \ / \ d(x,K) \leq r \}.
\end{equation}
The first geometric assumption can be reinterpreted with the help of the following lemma (see \cite{GHK} for an elementary proof).
\begin{lem} \label{LemEpaissi}
Let $K \subset \T^{2}$ such that $\mathfrak{b} >0$ on $K$ and satisfying \eqref{GeometricCondition}.
Then there exists $\underline{b}>0$, $d>0$ and $D>0$ such that
\begin{equation} \label{Petitd}
\mathfrak{b} \geq \underline{b} \ \text{ on } K_{2d},
\end{equation}
\begin{equation} \label{GrandD}
\forall x \in \T^{2}, \ \forall e \in \S^{1}, \ \exists t \in [0,D],
\ \forall s \in \left[t,t+ \frac{d}{2}\right], \  x + se \in K_{d}.
\end{equation}
\end{lem}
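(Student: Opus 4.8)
The plan is to exploit the two parts of Condition 1 separately and then combine. First, since $\mathfrak{b}$ is continuous on the compact torus $\T^2$ and strictly positive on the compact set $K$, it attains a positive minimum on $K$; by continuity this positivity persists on a neighbourhood of $K$, so there exist $\underline b > 0$ and $d_1 > 0$ with $\mathfrak{b} \geq \underline b$ on $K_{2d_1}$. This gives \eqref{Petitd} for any $d \leq d_1$. The only care needed here is that $K_{r}$ as defined in \eqref{Kr} is indeed a neighbourhood of $K$ that shrinks to $K$ as $r \to 0$, which is immediate from the definition of $\dist(\cdot,K)$ and compactness.

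For \eqref{GrandD}, the idea is to upgrade the pointwise statement \eqref{GeometricCondition} — every ray $t \mapsto x+te$ eventually hits $K$ — to a uniform statement: there is a time $D$ by which every ray has spent an interval of length at least $d/2$ inside the slightly fattened set $K_d$. I would argue by compactness of the unit tangent bundle $\T^2 \times \S^1$. For each $(x,e)$, \eqref{GeometricCondition} gives some $y = y(x,e) \geq 0$ with $x + y e \in K$; by openness of $\{z : \mathfrak{b}(z) > 0\}$ containing $K$ (or just by openness of the interior of $K_d$ for $d$ small, since $K \subset \mathrm{int}(K_d)$), the point $x+ye$ has an open neighbourhood contained in $K_d$, hence there is $\epsilon(x,e) > 0$ and a small open neighbourhood $\mathcal{O}_{(x,e)}$ of $(x,e)$ in $\T^2 \times \S^1$ such that for all $(x',e') \in \mathcal{O}_{(x,e)}$ one has $x' + s e' \in K_d$ for all $s \in [y(x,e) - \epsilon(x,e),\, y(x,e) + \epsilon(x,e)]$. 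Choosing $d$ small enough (say $d \leq d_1$) and then shrinking $\epsilon(x,e)$ so that $2\epsilon(x,e) \geq d/2$ is not automatic, so instead I would simply note that the interval $[y(x,e)-\epsilon, y(x,e)+\epsilon]$ can be replaced by the subinterval $[t,t+d/2]$ with $t := y(x,e) - d/4$ provided $2\epsilon(x,e) \geq d/2$; to guarantee this, first fix $d$, then use that the straight-line flow depends continuously on $(x',e',s)$ jointly, so the open condition "$x'+se' \in K_d$ for all $s$ in a fixed compact interval of length $d/2$" holds on a whole neighbourhood of $(x,e)$. Extract a finite subcover $\mathcal{O}_{(x_1,e_1)},\dots,\mathcal{O}_{(x_N,e_N)}$ of $\T^2 \times \S^1$ and set $D := \max_i \big(y(x_i,e_i) + d/4\big)$. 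Then every $(x,e)$ lies in some $\mathcal{O}_{(x_i,e_i)}$, and with $t := y(x_i,e_i) - d/4 \in [0,D]$ one gets $x + se \in K_d$ for all $s \in [t, t+d/2]$, which is \eqref{GrandD}.

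The main obstacle is the bookkeeping in the compactness argument: one must be careful that the neighbourhood in $\T^2 \times \S^1$ can be taken uniform in the flow parameter $s$ over a \emph{fixed} interval of length $d/2$ (not just at a single time), which is what allows the length $d/2$ in \eqref{GrandD} to survive the passage to a finite subcover. This is handled by choosing $d$ small first, then using joint continuity of $(x,e,s) \mapsto x+se$ together with the fact that $K \subset \mathrm{int}(K_d)$, so that a compact piece of trajectory landing in $K$ is stable under small perturbations of all three arguments. Everything else is routine, and since the excerpt explicitly refers to \cite{GHK} for "an elementary proof", I would keep the write-up brief and essentially reduce it to the two compactness observations above.
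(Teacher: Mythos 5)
The paper does not actually prove Lemma \ref{LemEpaissi}; it refers the reader to \cite{GHK} for ``an elementary proof,'' so there is no in-text argument to compare against. Your plan is correct and is precisely the kind of elementary compactness argument the authors have in mind: continuity of $\mathfrak{b}$ on a neighbourhood of the compact set $K$ for \eqref{Petitd}, and compactness of $\T^2\times\S^1$ combined with joint continuity of $(x,e,s)\mapsto x+se$ for \eqref{GrandD}. Your emphasis on the genuine subtlety --- that the interval of length $d/2$ in the flow variable must survive the passage to a finite subcover, which requires the neighbourhood in $\T^2\times\S^1$ to be chosen uniform over a fixed compact $s$-interval rather than pointwise --- is exactly the right thing to isolate.

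One remark that would streamline the middle of your argument: you do not actually need to worry about whether ``$2\epsilon(x,e)\geq d/2$'' holds for the initially chosen $\epsilon(x,e)$, nor to invoke openness of $\mathrm{int}(K_d)$ in a soft way. Since $x+y(x,e)\,e\in K$ and $K_d$ is the exact $d$-neighbourhood, the triangle inequality immediately gives $x+se\in K_d$ for every $s$ with $|s-y(x,e)|<d$, i.e.\ on an interval of length $2d$; you may then take the fixed interval $[\,\max(0,y(x,e)-d/4),\ \max(0,y(x,e)-d/4)+d/2\,]$ and propagate the strict inequality $d(\cdot,K)<d$ to a neighbourhood of $(x,e)$ using joint continuity, exactly as you do in the second half of the paragraph. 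This removes the slightly circular bookkeeping around $\epsilon(x,e)$ and makes the quantitative margin (where the factor $d/2$ comes from) transparent. Otherwise the proof is sound.
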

Roughly speaking, this lemma gives, for a ray of light (whose velocity is a unit vector), the maximum time it can spend outside $K_d$ during one passage (that corresponds to $D$) and the minimal time it has to spend in $K_d$ during one passage (that corresponds to $d/2$). \par
This allows us to introduce the second condition on the magnetic field: \par
\noindent
{\bf Condition  2. Bound from below.} We assume that there exists $\tilde{b} \in \R^-$ such that
\begin{equation*}
 \mathfrak{b} \geq \tilde{b}\ \text{ on } \T^2,
\end{equation*}
and such that, keeping the same notations as in Lemma \ref{LemEpaissi},
\begin{equation} \label{1.14}
 \tilde{b} D +  \underline{b} \frac{d}{2}  \ \, > 0.
\end{equation}
It is clear that a constant nonzero magnetic field satisfies the bending condition. \par
A less general notion was introduced in \cite{GHK} (the second condition was replaced by a fixed sign assumption, which corresponds to $\tilde{b}=0$); the main interest of this new definition comes from the fact that it is stable by small perturbations. Indeed, if some $\mathfrak{b}$ satisfies the bending condition, then there exists $\epsilon_0>0$ such that any $b$ with $\|b - \mathfrak{b} \|_\infty< \epsilon_0$, still satisfies the bending condition. In particular, the following lemma allows to improve some of the results of \cite{GHK}. \par
\begin{lem} \label{PropSolRefCM2}
Let $x_0 \in \T^2, r_0 >0$ and  $\overline{M}>1$. Given $\mathfrak{b} \in C^{1}(\T^{2})$ satisfying the bending condition, there exist:
\begin{itemize}
\item $c_{0}>0$ depending on $\mathfrak{b}, x_0, r_0$, 
\item $\underline{m} >0$ depending only on $\mathfrak{b}, x_0, r_0$, 
\item$T>0$ depending on $\mathfrak{b}$, $x_0, r_0$ and $\overline{M}$, and 
\item $\kappa$ depending on $\mathfrak{b}$, $x_0, r_0$ and $\overline{M}$,
\end{itemize}
such that for all $\mathfrak{F} \in L^{\infty}(0,T;W^{1,\infty}(\mathbb{T}^{2} \times \mathbb{R}^{2}))$ satisfying $\| \mathfrak{F} \|_{L^{\infty}} \leq \kappa$, if $c \geq c_{0}$ then the characteristics $(\overline{X}, \overline{V})$ associated with ${\hat{v}^\perp} \mathfrak{b} +\mathfrak{F}$ satisfy:
\begin{multline} \label{GVCM2}
\forall x \in \mathbb{T}^2, \forall v \in \mathbb{R} ^2 \text{ such that }  \overline{M} \geq | v | \geq \underline{m},
\exists t \in (T/4,3T/4), \ \overline{X}(t,0,x,v) \in B(x_0,r_0/2) \\
 \text{ and for all } s \in [0,T], \ \ \frac{|v|}{2} \leq |\overline{V}(s,0,x,v)| \leq 2|v|.
\end{multline}
\end{lem}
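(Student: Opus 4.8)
The plan is to prove Lemma~\ref{PropSolRefCM2}; Lemma~\ref{PropSolRefCM} is then the special case of a constant nonzero $\mathfrak{b}$ (which trivially satisfies the bending condition). Up to replacing $\mathfrak{b}$ by $-\mathfrak{b}$ we may assume Conditions~1 and 2 hold for $\mathfrak{b}$ itself. The rough idea is that, in the regime where $|v|$ is large but $c$ is even larger, the magnetic force $\hat{v}^\perp \mathfrak{b}$ acts essentially like the nonrelativistic Lorentz force $v^\perp \mathfrak{b}$ (since $\hat v \approx v$ as $c\to\infty$), and a particle subject to such a force follows an almost-circular arc (a ``Larmor gyration'') whose center drifts slowly; by choosing $|v|$ in an appropriate window one can arrange that the particle, during one gyration, sweeps out a disc of radius comparable to $r_0$ and, because of the geometric control condition on $K$ (through Lemma~\ref{LemEpaissi}), that disc must meet $B(x_0,r_0/2)$. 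The sign/lower-bound condition \eqref{1.14} is exactly what guarantees the total turning angle accumulated over one passage through $K_d$, minus what is lost where $\mathfrak{b}$ may be negative, is still positive, so the trajectory genuinely ``bends around'' rather than drifting off straight.

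The key steps, in order: (1) \emph{Rescale to a nonrelativistic problem with a large parameter.} Introduce the velocity modulus $\rho=|v|$ and rescale time so that the angular dynamics is normalized; equivalently, observe that for $|v|\le \overline M$ and $c\ge c_0$ large, $\hat v = v + O(\overline M^3/c^2)$ and $|\hat v| \in [\tfrac12 |v|, 2|v|]$ once $c_0$ is large enough, which already gives the velocity-modulus bound in \eqref{GVCM2} (the force $\hat v^\perp \mathfrak{b} + \mathfrak F$ is tangential up to the small term $\mathfrak F$, and $|\mathfrak F|\le\kappa$ contributes only $O(\kappa T)$ to $|V|$, absorbed by choosing $\kappa$ small relative to $\underline m$). (2) \emph{Reduce to the unperturbed magnetic flow.} Show by a Gronwall argument that on a time interval of length $T$ (to be fixed depending on $\mathfrak{b},x_0,r_0,\overline M$), the characteristics of $\hat v^\perp\mathfrak{b}+\mathfrak F$ stay within $O(\kappa T) + O(1/c)$ of the characteristics of the pure field $v^\perp \mathfrak{b}$; so it suffices to analyze the latter and then choose $\kappa$ small and $c_0$ large. (3) \emph{Analyze the pure magnetic flow.} For $\dot X = V$, $\dot V = V^\perp \mathfrak{b}(X)$, note $|V|$ is exactly conserved; write $V = \rho(\cos\phi,\sin\phi)$, so $\dot\phi = -\mathfrak{b}(X)$ and $\dot X = \rho(\cos\phi,\sin\phi)$. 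Thus $X$ moves at constant speed $\rho$ and its direction rotates at rate $\mathfrak{b}(X)$. Using Lemma~\ref{LemEpaissi}: along any passage the ray spends time $\ge d/(2\rho)$ inside $K_d$ where $\mathfrak{b}\ge\underline b$, and at most $D/\rho$ outside, where $\mathfrak{b}\ge\tilde b$; hence over one such passage the total turning is at least $(\underline b\, d/2 + \tilde b\, D)/\rho > 0$ by \eqref{1.14}. Choosing $\rho$ (i.e.\ picking $\underline m$ and $\overline M$, and restricting $|v|$ between them) so that the radius of curvature $\rho/\underline b$ is at most, say, $r_0/100$, the trajectory is trapped in a disc of radius $O(r_0/100)$ and spirals/bends enough that, combined with the GCC-type covering property \eqref{GrandD}, at some time $t$ it enters $B(x_0, r_0/2)$. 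Arrange the window for $t$ to be $(T/4, 3T/4)$ by taking $T$ large and translating. (4) \emph{Put the pieces together}: fix $\overline M$ (the hypothesis of the lemma), derive $\underline m, T, \kappa, c_0$ from the above, and transfer the conclusion from the pure flow to $(\overline X,\overline V)$ via step (2), using that $B(x_0,r_0/2)$ is open so a small perturbation still lands inside if the unperturbed trajectory passes through, say, $B(x_0, r_0/4)$.

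The \textbf{main obstacle} is step (3): making rigorous the passage from ``$\mathfrak{b}$ has positive net turning along each chord'' to ``the trajectory actually returns to a prescribed small ball $B(x_0,r_0/2)$''. The difficulty is that $\mathfrak{b}$ is only assumed $C^1$ and may change sign, so the curve is not literally a circle; one must control how the guiding center drifts and show it cannot escape to a region far from $x_0$ before the turning accumulates. The clean way is to argue by contradiction and compactness: if for arbitrarily small $\underline m$ (hence arbitrarily small curvature radius) and suitable $T$ the trajectory avoided $B(x_0,r_0/2)$ throughout $(T/4,3T/4)$, then — since it is a unit-speed-up-to-scaling curve with curvature bounded below by $\underline b/\overline M$ and confined (by the turning estimate) to a bounded neighborhood — one extracts a limiting behavior contradicting \eqref{GrandD}. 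This is essentially the argument of \cite[Lemma~5.1 / Appendix]{GHK}, adapted to allow the sign-changing $\mathfrak{b}$ via Condition~2; I would follow that blueprint, isolating the turning-angle estimate as the one genuinely new computation, and otherwise cite \cite{GHK} for the geometric covering argument and for the Gronwall stability estimates of steps (1)–(2).
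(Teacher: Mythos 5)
Your steps (1), (2), and (4) are in the right spirit and broadly match the paper's Case~4 (taking the small force $\mathfrak{F}$ and the relativistic correction into account by Gronwall / perturbation), but your step (3), which you correctly identify as the crux, contains a genuine error in the underlying mechanism. You propose to pick $\underline m$ so small that the Larmor radius $\rho/\underline b$ is at most $r_0/100$, arguing that the trajectory is then ``trapped in a disc of radius $O(r_0/100)$'' and must enter $B(x_0,r_0/2)$. This cannot work: trapping a trajectory in a small disc around its \emph{initial} point $x$ prevents it from ever reaching $B(x_0,r_0/2)$ whenever $x$ is far from $x_0$, which is the generic situation on the torus. Moreover, the lemma quantifies over all $|v| \in [\underline m, \overline M]$ with $\overline M$ \emph{given} (and arbitrarily large), so for $|v|$ near $\overline M$ the radius of curvature is $\overline M/\underline b$, not small, and the ``tight spiral'' picture fails on most of the velocity range you must cover. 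In fact the attribution to \cite{GHK} is also off: the argument there (and in this paper's appendix) is not a confinement argument.

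The correct mechanism goes the opposite way: $\underline m$ must be chosen \emph{large}. For $|v| \ge \underline m$ large, the Larmor radius $|v|/\|\mathfrak{b}\|_\infty$ is large compared to the torus, so over a short time $T_{|v|} = L\sqrt{1+|v|^2/c^2}/|v|$ the trajectory $\overline X$ deviates from a straight line by $O\bigl(\overline b L^2 \sqrt{1/c_0^2 + 1/\underline m^2}\bigr)$, which is $< r_0/8$ once $c_0$ and $\underline m$ are large. One then observes that only a \emph{finite} set of directions $\alpha_1,\dots,\alpha_N \in \mathbb S^1$ admit a half-line in $\T^2$ avoiding $B(x_0,r_0/8)$; if the initial direction of $v$ avoids small neighborhoods $\mathcal V_i$ of these $\alpha_i$, the nearly-straight trajectory hits the ball. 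If instead $v/|v| \in \mathcal V_j$, one waits a time $\tau$; the angle $\theta$ between $v$ and $\overline V(t)$ satisfies
\begin{equation*}
\theta'(t) = \frac{\mathfrak{b}(\overline X(t,0,x,v))}{\sqrt{1+|\overline V|^2/c^2}} + O\!\left(\frac{\|\mathfrak F\|_\infty}{|\overline V|}\right),
\end{equation*}
and the bending condition (\eqref{Petitd}, \eqref{GrandD}, \eqref{1.14}) is used to bound $\theta(\tau)$ from below by counting the alternating passages through $K_d$ (where $\mathfrak{b}\ge\underline b$, duration $\ge \frac{d}{2}\sqrt{1+|v|^2/c^2}/|v|$) and $\T^2\setminus K_d$ (where $\mathfrak{b}\ge\tilde b$, duration $\le D\sqrt{1+|v|^2/c^2}/|v|$); summing over the $N \approx \tau |v|/(\sqrt{1+|v|^2/c^2}(D+d/2))$ passages, the net angle is at least $\beta > 0$, which suffices to leave $\cup_i \mathcal V_i$ and reduce to the first case. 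Your turning-angle estimate in step (3) is essentially this computation, so the ingredient is there — you just applied it in the wrong regime (small $|v|$ / small Larmor radius), which breaks the argument for the geometric reasons above.
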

\ \par
We prove Lemma \ref{PropSolRefCM2} in several cases of increasing complexity. In a first time (Cases 1--3), we suppose that $\mathfrak{F}=0$. In Case 4, we explain how to take the additional small force $\mathfrak{F}$ into account. \par
In all cases, we define
\begin{equation} \label{DefOvelineEta}
\overline{b}:= \max_{x \in \T^{2}} \mathfrak{b}(x).
\end{equation}
Before giving the proof, we give its main ideas. \par
\ \par
\noindent {\it Idea of the proof of Lemma \ref{PropSolRefCM2}. }The proof is a variant of  that given for \cite[Proposition 5.1]{GHK}; here we have to be careful of the effects of relativistic transport and in addition, the assumptions on $\mathfrak{b}$ are more general. Conversely, the assumption of smallness on $\mathfrak{F}$ will allow us to simplify some aspects of the proof.

In a first time, we consider non-relativistic trajectories ($c=+\infty$) and $\mathfrak{F}=0$.
One can show that for (non-relativistic)  free transport (whose trajectories are  straight lines), there is only a finite number of bad initial directions which produce trajectories which never meet the ball $B(x_0,r_0)$. Indeed, trajectories with directions having irrational slopes are dense in the torus $\T^2$ and therefore have to meet $B(x_0,r_0)$. Considering periodic trajectories with rational slopes, because of the ``thickness'' of the ball $B(x_0,r_0)$, only a finite number of them produce trajectories which do not meet the ball. A magnetic field satisfying the bending condition allows to circumvent these bad initial directions. The proof of Lemma \ref{PropSolRefCM2} is based on the three following ingredients.
\begin{itemize}
\item Let us define the classical characteristics $({X}, {V})$ associated with ${{v}^\perp} \mathfrak{b}$.
We can first observe that when the modulus of the initial velocity $|v|$ is large, the characteristics $({X}, {V})$ remain close to those of free transport (straight lines) at least for a time of order $1/|v|$. This allows to show that trajectories $X$ which initially have a velocity with a good direction, meet the control zone as it is the case for free transport.

\item For trajectories which initially have a bad direction, we rely on the ``rotation'' effect provided by the magnetic field, which yields that bad directions become good directions after some time $\tau$. The condition {\bf 1.} ensures that trajectories ``often'' meet the zone where $\mathfrak{b}>0$, in which the direction of the velocity gets rotated and therefore do not remain of bad direction. Note that at each passage in the zone $\mathfrak{b}>0$, the angle of the velocity is only modified by $\mathcal{O}(1/|v|)$, but using the reinterpretation of Lemma \ref{LemEpaissi}, we can see that during a time of order $1$, there are at least $\mathcal{O}(|v|)$ passages in this zone. On the other hand, the condition {\bf 2.} in the bending condition allows to say the trajectories are not too affected by their passages in the zones where $\mathfrak{b}$ can be non-positive (and which could otherwise annihilate any bending effect). 

\item Finally, by choosing $c$ sufficiently large and $\mathfrak{F}$ sufficiently small, by a perturbation argument, we get that the relativistic trajectories $(\overline{X}, \overline{V})$ are close to the classical trajectories $(X,V)$, which allows to conclude.

\end{itemize}

\begin{figure}[!ht]
\begin{center}
	\resizebox{!}{4cm}{\input{TrajO3.pstex_t}}
\end{center}
\begin{caption}{An illustration of the proof of Lemma \ref{PropSolRefCM2}: the bending effect of the magnetic field.} \end{caption}
\end{figure}

\begin{proof}[Proof of Lemma \ref{PropSolRefCM2}]
\ \par
%
In the sequel, we denote $(X^{\#},V^{\#})$ the characteristics associated with the relativistic free transport and $(\overline{X},\overline{V})$ the characteristics associated with relativistic transport with the force ${\hat{v}^\perp} \mathfrak{b} +\mathfrak{F}$.
\ \par
\noindent{\bf Case 1.} \emph{Constant magnetic field and $\mathfrak{F}=0$.}
Let us first suppose that $\mathfrak{b}$ is constant and $\mathfrak{F}=0$; for readability we assume here that $\mathfrak{b}:=1$.

As noticed in  \cite[Appendix A, p. 373-374]{GLA}, there is only a finite number of directions in $\mathbb{S}^1$ for which there exists a half-line in $\mathbb{T}^2$ which does not intersect $B(x_0,r_0/8)$. Identifying $\mathbb{S}^1$ with $[0,2\pi[$, we denote them $\alpha_1,...,\alpha_N \in [0,2\pi[$.
%
\ \par 
We introduce the neighborhoods of $\alpha_{i}$:
\begin{equation*}
\mathcal{V}_i=(\alpha_i- \beta/2, \alpha_i + \beta/2),
\end{equation*}
as follows. Let $\beta>0$ and $\tau >0$  satisfying:
\begin{equation} \label{defbt}
\beta < \min_{i\neq j} d(\mathcal{V}_i,\mathcal{V}_j)/8
\ \text{ and } \ 
{\tau} = \sqrt{{1+ \frac{M^2}{c_0^2}}}\min_{i\neq j} d(\mathcal{V}_i,\mathcal{V}_j)/7.
\end{equation}
By construction, observe that: 
$$ \beta < \frac{\tau}{\sqrt{{1+ \frac{M^2}{c_0^2}}}}.$$
By a compactness argument, there exists $L>0$ such that for any $x \in \mathbb{T}^2$, and for any $a_i \in \mathbb{S}^1\setminus \cup_{i=1}^N \mathcal{V}_i$, any  trajectory $X^{\#}$ starting from $x$ with a direction $a_i$ has to cover at most a distance $L$ to meet $B(x_0,r_0/8)$.

We choose $c_0$ and $m$ large enough such that:
\begin{equation} \label{cond1}
T_{m}:=\frac{L \sqrt{1 + m^2/c^2}}{m}\leq L \sqrt{\frac{1}{m^2} + \frac{1}{c_0^2}}<\tau.
\end{equation}
This is the time ``free'' trajectories $X^{\#}$ with velocity of modulus $m$ take to cover the distance $L$. We observe that for any $| v | \geq m$, we have 
\begin{equation} \label{Eq:Tv}
T_{| v |}:=\frac{L \sqrt{1 + |v|^2/c^2}}{|v|}\  \leq T_m.
\end{equation}
Now let $x\in \mathbb{T}^2, v \in \mathbb{R}^2$ with $|v| \geq {m}$. First note that for all $t$, $|\overline{V}(t,0,x,v)|=|v|$. Now let us discuss according to the direction of $v$. \par
\ \par \noindent
$\bullet$ First case: $\frac{v}{| v |} \in  \mathbb{S}^1\setminus \cup_{i=1}^N \mathcal{V}_i$.

We have, for any $t \leq T_{|v|} \leq T_m$, 
\begin{align*}
| X^{\#}(t,0,x,v)- \overline{X}(t,0,x,v)| & \leq \frac{| v |}{\sqrt{1 + |v|^2/c^2}} \frac{T_{| v |}^2}{2}= \frac{1}{\sqrt{1 + |v|^2/c^2}} \frac{L^2 (1 + |v|^2/c^2)}{2|v|}\\
 & \leq  \frac{L^2}{2}\sqrt{1/c_0^2+ 1/m^2} .
\end{align*}
We can impose $c_0$ and $m$ large enough such that:
\begin{equation} \label{cond2}
\frac{L^2}{2}\sqrt{1/c_0^2+ 1/m^2} < r_0/8.
\end{equation}
As a result, we obtain that
$$\exists t \in (0,T_{m}], \,  \overline{X}(t,0,x,v) \in B(x_0,r_0/4),$$
and \eqref{GVCM2} is trivial here since $|\overline{V}(t,0,x,v)|$ is conserved. \par
\ \par \noindent
$\bullet$ Second case:  $\frac{v}{| v |} \in  \cup_{i=1}^N \mathcal{V}_i$, say $\mathcal{V}_j$. \par
As already explained in the heuristics, we wait for a time $\tau$, that is, we consider:
\begin{equation*}
(x',v'):=(\overline{X} (\tau,0,x,v),\overline{V} (\tau,0,x,v)).
\end{equation*}
We observe that because of the ``rotation'' induced by the magnetic field, the angle between $v$ and $v'$ is equal to $\tau /\sqrt{1+ |v|^2/c^2}$, which is larger than $\tau/( \sqrt{1+ M^2/c_0^2})$. \par
\ \par
Consequently, due to the choice of $\beta$, 
\begin{equation*}
\frac{v'}{| v' |} \in  \mathbb{S}^1\setminus \cup_{i=1}^N \mathcal{V}_i,
\end{equation*}
and thus we are in the same case as before. \par
\ \par
Therefore, defining $T:= 4(T_m + \tau)$, we have proven that (actually up to a harmless translation in time):
\begin{equation*}
\exists t \in (T/4,3T/4], \ \overline{X}(t,0,x,v) \in B(x_0,r_0/4),
\end{equation*}
together with the condition \eqref{GVCM2} on the velocity field. \par
\ \par
\noindent{\bf Case 2.}  \emph{Positive magnetic field modulus and $\mathfrak{F}=0$.} Here we suppose that $\fb = \fb(x)>0$ on $\T^{2}$.

We are in the case where in Lemma \ref{LemEpaissi}, we can take $K=K_{d}=\T^{2}$ and 
\begin{equation*}
\underline{b} =\inf_{x \in \mathbb{T}^2} \fb>0.
\end{equation*}
Keeping the same notations as before, we set $\tau \in (0,T]$ and $\beta>0$ in order that
\begin{equation} \label{defibt}
\beta < \min_{i\neq j} d(\mathcal{V}_i,\mathcal{V}_j)/8
\ \text{ and } \ 
{\tau} = \frac{\sqrt{{1+ \frac{M^2}{c_0^2}}}}{\underline{b}}\min_{i\neq j} d(\mathcal{V}_i,\mathcal{V}_j)/7.
\end{equation}
 The proof is very similar to the previous one. Indeed, the following comparison estimate still holds for $t \leq T_{|v|} \leq \tau$ where $T_{|v|}$ was defined in \eqref{Eq:Tv}:
\begin{equation} \label{56b}
| X^{\#}(t,0,x,v)- \overline{X}(t,0,x,v)| \leq  \overline{b} \frac{L^2}{2}\sqrt{1/c_0^2+ 1/m^2}.
\end{equation}
Let $\tilde{x}\in \mathbb{T}^2, \tilde{v} \in \mathbb{R}^2$. We distinguish as before between two possibilities.
Using the inequality \eqref{56b}, the first case holds identically for $m$ large.
For the second case, we just have to check that with this magnetic field, the velocity is rotated by an angle at least equal to $\beta$ after some time $t  \leq \tau$. \par
We use the following computation for general $(x,v)$. Denote by $\theta(t)$ the angle between $v^{\perp}$ and $\overline{V}(t,0,x,v)$. We compute the scalar product of $\overline{V}(t,0,x,v)$ and  $\frac{d \overline{V}(t,0,x,v)}{dt}$, using the identity:
\begin{equation*}
\frac{d \overline{V}(t,0,x,v)}{dt}= \fb(\overline{X}(t,0,x,v)) \frac{\overline{V}(t,0,x,v)^\perp}{\sqrt{1+ |\overline{V}(t,0,x,v)|^2/c^2}}.
\end{equation*}
We straightforwardly obtain that $| \overline{V}(t,0,x,v)|=| v |$. Then, taking the scalar product of $v^{\perp}$  and  $\frac{d \overline{V}(t,0,x,v)}{dt}$, we likewise obtain:
%
\begin{equation} \label{Thetaprime}
\theta'(t) =  \frac{\fb(\overline{X}(t,0,x,v))}{\sqrt{1+ |v|^2/c^2}},
\end{equation}
(even if $\sin \theta(t)=0$ in which case one considers the scalar product with $v$.)
We deduce that $\theta'(t) \geq \underline{b}/{\sqrt{1+M^2/c_0^2}}$. \par
Thus going back to $(\tilde{x},\tilde{v})$, by the intermediate value theorem and the definition of the neighborhoods ${\mathcal V}_{i}$ and to \eqref{defibt}, there is a positive time $t$ less or equal to $\tau$ for which we have:
 \[
 \overline{V}(t,0,\tilde{x},\tilde{v}) \in   \mathbb{S}^1\setminus \cup_{i=1}^N \mathcal{V}_i,
 \]
and we conclude as in Case {\bf 1.}

\ \\
\par
\noindent{\bf Case 3.}  \emph{Magnetic field satisfying the bending condition and $\mathfrak{F}=0$.} Let us now consider the general case for $\bf$, but still without the additional force $\mathfrak{F}$. \par
Given $K$ satisfying the geometric condition \eqref{GeometricCondition}, we introduce $d$ and $D$ as in Lemma \ref{LemEpaissi}. 
We consider $K_{d}$ as in \eqref{Kr}. 
Using the condition {\bf 2.} (bound from below), we have: 
\begin{equation} \label{gamma}
\gamma:= D \tilde{b}+ \frac{d}{2}  \underline{b} \ \, > 0.
\end{equation}
(we recall that by definition, $\tilde{b}\leq 0$).
We assume here that in addition to \eqref{defbt},  $\tau \in (0,T]$ and $\beta>0$ are such that
\begin{equation*}
\beta < \frac{\tau\gamma}{2\sqrt{1+ M^2/c_0^2}(D+d/2)}. 
\end{equation*}
%
%
%
\ \par
Let $x\in \mathbb{T}^2$ and $v \in \mathbb{R}^2$. Once again we distinguish between the two possibilities (good or bad direction).
As before the first case is still similar since \eqref{56b} is still valid. We have to give a new argument only for the second case. \par
\ \par
%
%
To that purpose, we examine the behavior of the characteristics during the time interval $[0,\tau]$. \par
%
%
\ \par
\noindent
$\bullet$ By \eqref{GrandD}, each passage in $\T^2\setminus K$ of $X^{\#}(t,0,x,v)$ lasts at most during a time equal to:
\begin{equation*}
D\sqrt{1+ |v|^2/c^2}/ | v |.
\end{equation*}
The characteristics $\overline{X}$ are not straight lines since they are modified by the magnetic field. Let us prove nevertheless that if $| v |$ and $c$ are large enough, then the particle $\overline{X}(t,0,x,v)$ can remain at most during a time $D\sqrt{1+ |v|^2/c^2}/| v |$ in $\T^{2} \setminus K_{d}$. 
Let $x \in \T^{2} \setminus K_{d}$, and $\frac{v}{| v |} \in \mathbb{S}^1$, let $\sigma>0$. By Lemma \ref{LemEpaissi}, there exists some $s< D\sqrt{1+ |v|^2/c^2}/| v | $ such that $X^{\#}(\sigma + s,\sigma,x,v) \in K$. Now we can evaluate as before:
\begin{equation*}
\left| X^{\#}(\sigma+ s ,\sigma,x,v) - \overline{X}(\sigma + s ,\sigma,x,v) \right| \leq \overline{b}  \frac{D^2}{2}\sqrt{1/c_0^2+ 1/|v|^2} .
\end{equation*}
Hence, we can impose $c_0$ and $m$ large enough such that when $| v | \geq m$, 
\begin{equation*}
\overline{X}(\sigma + s ,\sigma,x,v) \in K_{d}.
\end{equation*}
Hence, we deduce that each passage of $\overline{X}(t,0,x,v)$ in $\mathbb{T}^2 \setminus K_{d}$ lasts at most during a time equal to $D\sqrt{1+ |v|^2/c^2}/| v |$, which proves the claim. \par
%
%

\ \par

\noindent $\bullet$ Likewise, given $x \in \T^{2} \setminus K_{d}$,  $\frac{v}{| v |} \in \mathbb{S}^1$ and $\sigma>0$, by Lemma \ref{LemEpaissi}, there exists some $s< D\sqrt{1+ |v|^2/c^2}/| v | $ such that $X^{\#}(\tau,\sigma,x,v) \in K$ for all $\tau \in [s+\sigma,s+\sigma+\frac{d}{2} \sqrt{1+|v|^{2}/c^{^2}}/|v|]$.
%
%
We have for all $\tau \in [s+\sigma,s+\sigma+\frac{d}{2} \sqrt{1+|v|^{2}/c^{^2}}]/|v|]$ that 
\begin{equation*}
| X^{\#}(\tau,\sigma,x,v)- \overline{X}(\tau,\sigma,x,v)| \leq \overline{b} \frac{| v |}{\sqrt{1+ |v|^2/c^2}} \frac{\left( (D+\frac{d}{2})  \sqrt{1+ |v|^2/c^2} / |v|) \right)^2}{2}.
\end{equation*}
%
Hence we can choose $m$ and $c_{0}$ large enough such that for any $| v | \geq m$ and when $c \geq c_{0}$, $\overline{X}(\tau,\sigma,x,v) \in K_{d}$ for all $\tau \in [s+\sigma,s+\sigma+\frac{d}{2} \sqrt{1+|v|^{2}/c^{^2}}]$.

\ \par

\noindent $\bullet$ We see that a trajectory alternates between passages in $K_d$ and in $\T^{2} \setminus K_{d}$. We denote as previously by $\theta(t)$ the angle between $\overline{V}(t,0,x,v)$ and $v$ and aim at finding a lower bound for $\theta(\tau)$. We introduce
\begin{equation*}
N:= \left\lfloor \frac{\tau |v| }{\sqrt{1+ |v|^2/c^2}(D+d/2)} \right\rfloor,
\end{equation*}
(here $\lfloor \cdot \rfloor$ denotes the floor function) and
\begin{equation*}
A := \Big\{ t \in [0,\tau] \ \Big/ \ \overline{X}(t,0,x,v) \in \T^{2} \setminus K_{d }\Big\}.
\end{equation*}
There are at most $N$ distinct intervals of length $\sqrt{1+ |v|^2/c^2}(D+d/2)$ in $[0,\tau]$.
Using the considerations above, one sees that the Lebesgue measure of $A$ satisfies
\begin{equation*}
|A| \leq (N+1) D\sqrt{1+ |v|^2/c^2}/| v |.
\end{equation*}
Indeed, taking $t_{0} := \inf A$, one has 
\begin{equation*}
|A \cap [t_{0}, t_{0} + \sqrt{1+ |v|^2/c^2}(D+d/2)] | \leq D\sqrt{1+ |v|^2/c^2}/| v |,
\end{equation*}
and we can make the same reasoning with $t_{1} := \inf (A \cap \{ t \geq t_{0} + \sqrt{1+ |v|^2/c^2}(D+d/2) \}$, etc. Likewise, 
\begin{equation*}
|([0,\tau] \setminus A) \cap [t_{0}, t_{0} + \sqrt{1+ |v|^2/c^2}(D+d/2)] | \geq \frac{d}{2}\sqrt{1+ |v|^2/c^2}/| v |,
\end{equation*}
and
\begin{equation*}
|[0,\tau] \setminus A | \geq N \frac{d}{2}\sqrt{1+ |v|^2/c^2}/| v |,
\end{equation*}

For $|v|\geq m$, with $m$ large enough and $c_{0}$ large enough we have $N \geq 3$ and hence 
\begin{equation*}
N -1 \geq \frac{\tau |v| }{2 \sqrt{1+ |v|^2/c^2}(D+d/2)},
\end{equation*}
so
\begin{equation*} 
\begin{aligned}
\theta(\tau) & \geq \underline{b} \, |[0,\tau] \setminus A | + \tilde{b} \, |A| \\
& \geq N \frac{d}{2} \underline{b} \sqrt{1+ |v|^2/c^2} / |v| + (N+1) D \tilde{b} \sqrt{1+ |v|^2/c^2} / |v| \\
& \geq  N \Big(D\tilde{b}+ \frac{d}{2} \underline{b} \Big) \sqrt{1+ |v|^2/c^2} / |v| + \frac{D \tilde{b}}{| v |}\sqrt{1+ |v|^2/c^2}  \\
& \geq  (N-1) \Big(D\tilde{b}+ \frac{d}{2} \underline{b} \Big) \sqrt{1+ |v|^2/c^2} / |v| \\
& \geq  \frac{\tau}{2\sqrt{1+ |v|^2/c^2}(D+d/2)} \Big(D \tilde{b}+ \frac{d}{2} \underline{b} \Big) \\
&= \frac{\tau\gamma}{\sqrt{1+ M^2/c_0^2}(D+d/2)} > \beta \, \, (>0). 
\end{aligned}
\end{equation*}
Finally we can conclude as in Case {\bf 2.}


%
%
%

%
%
%
%
%
\ \\
\par
\noindent{\bf Case 4.}  {\it With a nontrivial additional force $\mathfrak{F}$.} 

Let us finally explain how one can take the small force $\mathfrak{F}$ into account. 
First, we study the equations for $|\overline{V}|$ and $\theta$, where $\theta$ is the angle between $v$ and $\overline{V}(t,0,x,v)$. The following computations are valid for $v$ large so that $|\overline{V}(t,0,x,v)|$ does not vanish and for a time interval where $\theta \in [-\pi/2,\pi/2]$. \par
\ \par \noindent
$\bullet$ For what concerns $|\overline{V}|$, it suffices to take the scalar product with $\overline{V}(t,0,x,v)$ of the equation of $\overline{V}$. We infer
\begin{equation*}
\frac{d}{dt} |\overline{V}(t,0,x,v)|^{2} = 2 \mathfrak{F} \cdot \overline{V}(t,0,x,v),
\end{equation*}
so that
\begin{equation} \label{EvolNorme}
\frac{d}{dt} |\overline{V}(t,0,x,v)| =  \frac{\mathfrak{F} \cdot \overline{V}(t,0,x,v)}{| \overline{V}(t,0,x,v) |}.
\end{equation}
Therefore, it is clear that for $T>0$, there exists $\kappa>0$ small enough, such that if the $L^\infty$ norm of $\mathfrak{F}$ is smaller than $\kappa$, then one has for all $(x,v) \in \mathbb{T}^2 \times \mathbb{R}^2$ \text{ with } $| v | \geq m$, for any $t\in [0,T]$,
\begin{equation} \label{CompTailev}
\frac{|v|}{2} \leq |\overline{V}(t,0,x,v)| \leq 2 |v|.
\end{equation}
\ \par \noindent
$\bullet$ For what concerns $\theta$, taking the scalar product of the equation of $\overline{V}$ with $v$ we deduce
\begin{multline*}
\left(\frac{d}{dt} |\overline{V}(t,0,x,v)| \right) |v| \cos \theta(t) - |\overline{V}(t,0,x,v)||v|\theta'(t) \sin \theta(t) \\
= \mathfrak{b}(\overline{X}(t,0,x,v)) \frac{\overline{V}^\perp(t,0,x,v) \cdot v}{\sqrt{1+ \frac{|\overline{V}(t,0,x,v)|^2}{c^2}}} + \mathfrak{F} \cdot v.
\end{multline*}
Hence
\begin{multline*}
|\overline{V}(t,0,x,v)||v|\theta'(t) \sin \theta(t) \\ 
= \mathfrak{b}(\overline{X}(t,0,x,v)) |\overline{V}(t,0,x,v)| |v| \sin (\theta(t))  \frac{1}{\sqrt{1+ \frac{|\overline{V}(t,0,x,v)|^2}{c^2}}}
 - \mathfrak{F} \cdot \left(v - \frac{\overline{V}(t,0,x,v) |v|}{|\overline{V}(t,0,x,v)|}\cos \theta(t)\right).
\end{multline*}
We notice that
\begin{equation*}
v - \frac{\overline{V}(t,0,x,v) |v|}{|\overline{V}(t,0,x,v)|}\cos \theta(t) 
= \mbox{p}_{\{ \overline{V}(t,0,x,v)\}^{\perp}} (v),
\end{equation*}
where $\mbox{p}_{\{ \overline{V}(t,0,x,v) \}^{\perp}} (v)$ denotes the orthogonal projection of $v$ on $\{ \overline{V}(t,0,x,v) \}^{\perp}$.
So
\begin{equation} \label{EvolAngle}
\theta'(t)  = \frac{\mathfrak{b}(\overline{X}(t,0,x,v))}{\sqrt{1+ \frac{|\overline{V}(t,0,x,v)|^2}{c^2}}},  + \frac{1}{|\overline{V}(t,0,x,v)|} \mathfrak{F} \cdot \frac{\mbox{p}_{\{ \overline{V}(t,0,x,v)\}^{\perp}} (v) }{|v|\sin \theta(t)}.
\end{equation}
Note that 
\begin{equation*}
| \mbox{p}_{\{ \overline{V}(t,0,x,v)\}^{\perp}} (v)| = | v | \, |\sin(\theta(t))|,
\end{equation*}
so that, using Cauchy-Schwarz inequality:
\begin{equation*}
-\frac{1}{|\overline{V}(t,0,x,v)|} \left|\mathfrak{F} \cdot \frac{\mbox{p}_{\{ \overline{V}(t,0,x,v)\}^{\perp}} (v) }{|v|\sin \theta(t)}\right| \geq -\frac{1}{|\overline{V}(t,0,x,v)|} \Vert \mathfrak{F} \Vert_{\infty}.
\end{equation*}
\ \par
Now it is rather straightforward to revisit Case {\bf 3.} to include the small force $\mathfrak{F}$.
A helpful ingredient is given by the following Gronwall estimate, in which we compare the characteristics $(\overline{X},\overline{V})$ associated with $\mathfrak{F}+\mathfrak{b}(x)v^{\perp}$ with the characteristics $({X},{V})$ associated with the magnetic field $\mathfrak{b}(x)v^{\perp}$ alone. We have:
\begin{equation} \label{Gronw}
\left\{ \begin{array}{l}
	|V(\sigma+ t,\sigma,x,v) -\overline{V}(\sigma+ t,\sigma,x,v)| \leq \|\mathfrak{F} \|_{\infty} \exp(\| \mathfrak{b}\|_{W^{1,\infty}} (1+2|v|) t), \\
	|X(\sigma+ t,\sigma,x,v) -\overline{X}(\sigma+ t,\sigma,x,v)| \leq t \|\mathfrak{F} \|_{\infty} \exp(\| \mathfrak{b}\|_{W^{1,\infty}} (1+2|v|) t),
\end{array} \right.
\end{equation}
which allows to get the result by using a perturbation argument (we recall that $\|\mathfrak{F} \|_{\infty} \leq \kappa)$.
\end{proof}
\bibliographystyle{amsplain}
\bibliography{cvm}
\end{document}